\documentclass[letterpaper,11pt]{article}
\usepackage{geometry}
\usepackage{amsmath,amsthm,amsfonts,amssymb,bm,bbm,mathtools}
\usepackage{algorithm,algorithmic}
\usepackage{authblk}
\usepackage[numbers]{natbib}
\usepackage{listings}
\usepackage{graphicx,hyperref, footnote, comment}
\usepackage{xcolor}
\usepackage{array,booktabs,tabularx}

\newcolumntype{Y}{>{\raggedright\arraybackslash}X}

\newcommand{\parenth}[1]{\left( #1 \right)}
\newcommand{\norm}[1]{\left\| #1 \right\|}

\newcommand{\braces}[1]{\left\{ #1 \right \}}

\newcommand{\angles}[1]{\left\langle #1 \right \rangle}

\newcommand{\matr}[1]{\mathbf{#1}}
\newcommand{\vect}[1]{\mathbf{#1}}

\newtheorem{theorem}{Theorem}
\newtheorem{proposition}{Proposition}
\newtheorem{lemma}{Lemma}

\newlength{\widebarargwidth}
\newlength{\widebarargheight}
\newlength{\widebarargdepth}

\makeatletter
\long\def\@makecaption#1#2{
        \vskip 0.8ex
        \setbox\@tempboxa\hbox{\small {\bf #1:} #2}
        \parindent 1.5em
        \dimen0=\hsize
        \advance\dimen0 by -3em
        \ifdim \wd\@tempboxa >\dimen0
                \hbox to \hsize{
                        \parindent 0em
                        \hfil
                        \parbox{\dimen0}{\def\baselinestretch{0.96}\small
                                {\bf #1.} #2
                                }
                        \hfil}
        \else \hbox to \hsize{\hfil \box\@tempboxa \hfil}
        \fi
        }
\makeatother

\long\def\comment#1{}
\definecolor{battleshipgrey}{rgb}{0.52, 0.52, 0.51}
\definecolor{darkgray}{rgb}{0.66, 0.66, 0.66}
\definecolor{darkgreen}{rgb}{0.0, 0.2, 0.13}
\definecolor{darkspringgreen}{rgb}{0.09, 0.45, 0.27}
\definecolor{dukeblue}{rgb}{0.0, 0.0, 0.61}
\definecolor{olivedrab7}{rgb}{0.24, 0.2, 0.12}
\definecolor{darkblue}{rgb}{0.0, 0.0, 0.55}
\definecolor{darkscarlet}{rgb}{0.34, 0.01, 0.1}
\definecolor{candyapplered}{rgb}{1.0, 0.03, 0.0}
\definecolor{ao(english)}{rgb}{0.0, 0.5, 0.0}
\definecolor{applegreen}{rgb}{0.55, 0.71, 0.0}
\definecolor{calorange}{HTML}{FDB515}
\definecolor{calblue}{HTML}{003262}

\usepackage[capitalise,nameinlink]{cleveref}
\crefname{lemma}{Lemma}{Lemmas}
\crefname{fact}{Fact}{Facts}
\crefname{theorem}{Theorem}{Theorems}
\crefname{corollary}{Corollary}{Corollaries}
\crefname{claim}{Claim}{Claims}
\crefname{example}{Example}{Examples}
\crefname{problem}{Problem}{Problems}
\crefname{setting}{Setting}{Settings}
\crefname{definition}{Definition}{Definitions}
\crefname{assumption}{Assumption}{Assumptions}
\crefname{subsection}{Subsection}{Subsections}
\crefname{section}{Section}{Sections}
\crefname{figure}{Figure}{Figures}

\hypersetup{
  colorlinks=true,
  citecolor=calblue,
  linkcolor=calblue,
  urlcolor=calblue
}

\title{A Unifying View of Anchoring via Operator-Side Tikhonov Regularization}
\author{Zihao Chen\thanks{\texttt{zihaochen@berkeley.edu}. This work was done while the author was at UC Berkeley.}}
\date{}

\begin{document}
\maketitle

\begin{abstract}
Anchored fixed point and monotone equation methods, including Halpern iteration, extra anchored gradient, and their relatives, add a vanishing pull toward a reference point to obtain last-iterate guarantees. Existing anchored variants often achieve sharp last-iterate guarantees, but from the update-level perspective the placement of the anchor can be algorithm-specific and conceptually opaque. We show that anchoring admits a single operator-side construction: regularize the operator queried by the base method with a vanishing Tikhonov term, then run the unmodified base method. Applied to the Picard iteration, this recipe reproduces the Halpern iteration; applied to the forward step, extragradient (EG), and past extragradient (PEG, also known as Popov's method), it yields three variants whose anchor placements inherit the base method's query pattern. The forward-step instantiation gives a new residual convergence guarantee, while the EG and PEG instantiations give new regularized variants. The four analyses share a residual recurrence, recovering the \(O(1/k)\) Halpern residual-norm convergence rate, giving \(O(1/\sqrt{k})\) for the regularized forward step, and giving \(O(1/k)\) for the regularized EG and PEG variants in the unconstrained monotone Lipschitz setting.
\end{abstract}

\section{Introduction}
\label{sec:introduction}

Many equilibrium conditions and first-order convex optimality conditions can be
written as fixed point or monotone equation problems. In a fixed point problem,
one seeks \(\vect{x}^\star\) such that
\(T(\vect{x}^\star)=\vect{x}^\star\). In a monotone equation problem, one seeks
\(\vect{x}^\star\) such that
\(F(\vect{x}^\star)=\vect{0}\), where \(F\) is monotone in the standard sense
made precise in \cref{sec:setup}. For instance, the first-order conditions of a smooth
unconstrained convex-concave minmax problem give a monotone saddle operator.
In this work, we focus on last-iterate residual convergence, measured by
\[
    \norm{T(\vect{x}_k)-\vect{x}_k},
    \qquad
    \norm{F(\vect{x}_k)}.
\]

The simplest fixed point method is Picard iteration,
\(\vect{x}_{k+1}=T(\vect{x}_k)\). When \(T\) is a contraction, the usual
contraction argument gives convergence; for a merely nonexpansive \(T\), that
argument no longer applies. The corresponding method for
\(F(\vect{x})=\vect{0}\) is the forward step
\(\vect{x}_{k+1}=\vect{x}_k-\eta_k F(\vect{x}_k)\). When
\(F=\nabla f\) for a smooth convex function \(f\), this is gradient descent.
It behaves well, with suitable step sizes, under stronger assumptions such as
cocoercivity, but for a merely monotone Lipschitz operator it can diverge.

Anchoring repairs or sharpens this last-iterate behavior by
adding a vanishing pull toward a reference point \(\vect{x}_0\). Halpern
iteration does this for nonexpansive fixed point problems
\cite{halpern1967fixed,browder1967convergence,wittmann1992approximation}. Its Tikhonov interpretation is
direct: the regularization makes the current problem better conditioned, and
the regularization is then slowly removed. For extragradient (EG) and past
extragradient (PEG, also known as Popov's method), the picture is less clear. These
methods already stabilize monotone Lipschitz problems by changing the query
pattern and achieve last-iterate convergence
\cite{korpelevich1976extragradient,popov1980modification,gorbunov2022extragradient,gorbunov2022last}.
Anchored variants of these methods improve the last-iterate residual rate in related settings
\cite{yoon2021accelerated,lee2021fast,boct2024extra,tran2021halpern,bot2025fast,tran2025accelerated,alcala2025stochastic}.
These guarantees are obtained through method-specific analyses and sometimes carefully
chosen Lyapunov functions. This leaves a basic structural question about the
mechanism of acceleration: can the anchor placement be derived systematically
from the query pattern of the base method?

This paper proposes to regularize the operator being queried,
then run the original template method unchanged. For fixed point problems, the
regularized map is
\[
    T_k(\vect{x})
    =
    (1-\delta_k)T(\vect{x})+\delta_k\vect{x}_0.
\]
For monotone equations, the regularized object is the scaled displacement
queried by the method:
\[
    G_k(\vect{x})
    =
    \eta_kF(\vect{x})+\delta_k(\vect{x}-\vect{x}_0).
\]
The anchor terms that appear after expanding the update are then determined by
the query points of the base method.

The fixed point case gives the first example. Applying Picard iteration to
\(T_k\) gives
\[
    \vect{x}_{k+1}
    =
    T_k(\vect{x}_k)
    =
    (1-\delta_k)T(\vect{x}_k)+\delta_k\vect{x}_0,
\]
which is Halpern iteration. As another example, extragradient shows the anchor-placement mechanism
more visibly. With a fixed \(\eta\), applying the two-query template to
\(G_k\) gives
\[
    \vect{x}_{k+1/2}
    =
    \vect{x}_k-G_k(\vect{x}_k),
    \qquad
    \vect{x}_{k+1}
    =
    \vect{x}_k-G_k(\vect{x}_{k+1/2}).
\]
Expanding the second line yields
\[
    \vect{x}_{k+1}
    =
    \vect{x}_k-\eta F(\vect{x}_{k+1/2})
    -\delta_k(\vect{x}_{k+1/2}-\vect{x}_0).
\]
Hence, under this operator-side rule, the second anchor term is evaluated at the
lookahead point because that is where the second operator query is made.

\paragraph{Contributions.}
Existing anchored extragradient and Popov-type methods achieve accelerated
last-iterate residual rates
\cite{yoon2021accelerated,lee2021fast,boct2024extra,tran2021halpern,bot2025fast}. Instead of explaining why each of those particular anchored
updates achieves acceleration, we give a separate
operator-side route to anchored algorithms. The rule is simple: regularize the
operator (or its scaled displacement) with a vanishing Tikhonov term, and run
the unmodified base template on the result. Applied to Picard, the rule
recovers Halpern iteration and the known \(O(R/k)\) residual bound. Applied
to the forward step, it stabilizes the method and gives an
\(O(LR/\sqrt{k})\) residual-norm guarantee under monotonicity and Lipschitz
continuity, the first such guarantee without extragradient or the Popov-type
past query.\footnote{The regularized forward-step result first appeared in the
author's dissertation \cite{chen2025algorithmic}. It was later independently
discovered via AI-assisted formal proof search in Lean by Surina et
al.~\cite{surina2026improved}.} Applied to EG and
PEG, it gives the new regularized variants Reg-EG and Reg-PEG, both with
\(O(LR/k)\) residual-norm bounds. The four analyses then
reduce to the same recurrence. This perspective results in a single intuitive derivation that
produces new accelerated anchored variants and their analyses uniformly.

\paragraph{Organization.}
\Cref{sec:setup} introduces the setup, base methods, and related work.
\Cref{sec:operator-side-regularization} discusses the operator-side
regularization principle and uses Halpern iteration as the fixed point
prototype. \Cref{sec:main-results} establishes the monotone equation results
for the regularized forward step, Reg-EG, and Reg-PEG.
\Cref{sec:conclusion} concludes.

\section{Background and Preliminaries}
\label{sec:setup}

\subsection{Problem Setup}
\label{sec:problems}

We use two operator templates throughout the paper. The fixed point results
study a map \(T\) and certify an iterate by its fixed point residual, while the
monotone equation results study an operator \(F\) and certify an iterate by the
norm of \(F(\vect{x})\). The notation differs slightly across the two settings, but the
role of the residual is the same: it is the last-iterate certificate controlled
by the regularized dynamics.

For fixed point problems, we work in a general normed space
\((\mathcal X,\norm{\cdot})\) with a map \(T:\mathcal X\to\mathcal X\). The map
is \emph{nonexpansive} if
\[
    \norm{T(\vect{x})-T(\vect{y})}
    \le
    \norm{\vect{x}-\vect{y}}
    \qquad
    \text{for all }\vect{x},\vect{y}\in\mathcal X.
\]
It is a \emph{contraction} if the right-hand side can be multiplied by some
\(\gamma<1\). For monotone equations, we work in a real Hilbert space
\((\mathcal H,\angles{\cdot,\cdot})\) with a single-valued operator
\(F:\mathcal H\to\mathcal H\). The operator is \emph{monotone} if
\[
    \angles{F(\vect{x})-F(\vect{y}),\vect{x}-\vect{y}}\ge0
    \qquad
    \text{for all }\vect{x},\vect{y}\in\mathcal H.
\]
We use two standard quantitative refinements: \(F\) is
\emph{\(\mu\)-strongly monotone} if the left-hand side is at least
\(\mu\norm{\vect{x}-\vect{y}}^2\), and it is \emph{\(L\)-Lipschitz} if
\[
    \norm{F(\vect{x})-F(\vect{y})}
    \le
    L\norm{\vect{x}-\vect{y}}.
\]
The operator \(F\) is \emph{\(\beta\)-cocoercive} if
\[
    \angles{F(\vect{x})-F(\vect{y}),\vect{x}-\vect{y}}
    \ge
    \beta\norm{F(\vect{x})-F(\vect{y})}^2.
\]
Cocoercivity is stronger than monotonicity and gives a direct bridge between
the two settings: if \(F\) is \(\beta\)-cocoercive, then \(I-\eta F\) is
nonexpansive for \(0<\eta\le2\beta\). The main monotone equation results below
do \textit{not} assume cocoercivity.

The corresponding solution sets are
\[
    \operatorname{Fix}(T)
    =
    \braces{\vect{x}\in\mathcal X:T(\vect{x})=\vect{x}},
    \qquad
    \operatorname{Zer}(F)
    =
    \braces{\vect{x}\in\mathcal H:F(\vect{x})=\vect{0}}.
\]
Every theorem assumes the relevant solution set is nonempty. We fix an anchor
\(\vect{x}_0\), choose a reference solution \(\vect{x}^\star\), and write
\[
    R\coloneqq\norm{\vect{x}_0-\vect{x}^\star}.
\]
All bounds are stated relative to this anchored radius.

With this notation in place, the residuals reported throughout are
\[
    r_T(\vect{x})\coloneqq\norm{T(\vect{x})-\vect{x}},
    \qquad
    r_F(\vect{x})\coloneqq\norm{F(\vect{x})}.
\]
The auxiliary residual \(\norm{G_k(\vect{x})}\) used in the
analyses is introduced with \(G_k\) in \cref{sec:framework_principle}.

A standard machine-learning instance of the monotone equation template is the
saddle-point operator. Given a smooth convex-concave function
\(\mathcal L(\vect{u},\vect{v})\), define
\[
    F(\vect{u},\vect{v})
    =\bigl(\nabla_{\vect{u}}\mathcal L(\vect{u},\vect{v}),
           -\nabla_{\vect{v}}\mathcal L(\vect{u},\vect{v})\bigr).
\]
This operator is monotone, and its zeros are the saddle points of
\(\mathcal L\). This is the setting most commonly studied for EG and
PEG-type methods.

\subsection{Base Methods}

The operator-side construction starts from the query pattern of a base method.
For this reason, we record both the updates and the points where the operator is
evaluated.

\paragraph{Picard iteration.}
For a map \(T\), Picard iteration is
\begin{equation}\label{eq:picard_iteration}
    \vect{x}_{k+1}=T(\vect{x}_k).
\end{equation}
When \(T\) is a contraction, the Banach fixed point theorem gives convergence to
the unique fixed point. \Cref{sec:operator-side-regularization} applies this
template to the regularized map \(T_k\), which recovers Halpern iteration.

\paragraph{Forward step.}
The explicit one-call update for solving \(F(\vect{x})=\vect{0}\) is
\begin{equation}\label{eq:forward_step_std}
    \vect{x}_{k+1}
    =
    \vect{x}_k-\eta_kF(\vect{x}_k).
\end{equation}
In optimization notation this is gradient descent when \(F=\nabla f\), and
gradient descent-ascent when \(F\) is a saddle operator. For merely monotone
Lipschitz operators, the forward step need not be stable.
\Cref{sec:main-results} shows that adding the operator-side Tikhonov term
alone is enough to stabilize it.

\paragraph{Extragradient.}
The extragradient (EG) template uses one lookahead query and one correction
query:
\begin{equation}\label{eq:eg_std}
    \vect{x}_{k+1/2}
    =
    \vect{x}_k-\eta F(\vect{x}_k),
    \qquad
    \vect{x}_{k+1}
    =
    \vect{x}_k-\eta F(\vect{x}_{k+1/2}).
\end{equation}
\Cref{sec:main-results} applies the operator-side rule to this two-query
pattern; the induced Reg-EG variant places the second anchor at the lookahead
point and obtains an \(O(LR/k)\) residual-norm bound.

\paragraph{Past extragradient.}
Past extragradient (PEG), also known as Popov's method, can be written with an
auxiliary sequence as
\begin{equation}\label{eq:og_std}
    \tilde{\vect{x}}_k
    =
    \vect{x}_k-\eta F(\tilde{\vect{x}}_{k-1}),
    \qquad
    \vect{x}_{k+1}
    =
    \vect{x}_k-\eta F(\tilde{\vect{x}}_k).
\end{equation}
Below we use the initialization \(\tilde{\vect{x}}_{-1}=\vect{x}_0\). In
unconstrained Euclidean saddle-point problems, the method is also called
optimistic gradient descent-ascent (OGDA).
\Cref{sec:main-results} applies the same rule to the PEG query pattern,
producing Reg-PEG with an \(O(LR/k)\) residual-norm bound, matching the known
rates of nearby anchored PEG/OGDA variants.

\subsection{Related Work}

\paragraph{Fixed-point regularization and Halpern iteration.}
Halpern iteration and Browder approximants are classical ways to stabilize
nonexpansive fixed-point dynamics and select fixed points
\cite{browder1967convergence,halpern1967fixed,wittmann1992approximation}.
Viscosity approximation replaces the fixed anchor by a contraction, and
Tikhonov--Mann variants are part of the same fixed-point regularization picture
\cite{moudafi2000viscosity,xu2004viscosity,cheval2023modified}. Quantitative
analyses often describe bounds on \(\norm{\vect{x}_k-T(\vect{x}_k)}\) as rates
of \(T\)-asymptotic regularity; here these are fixed-point residual bounds,
including the known \(O(1/k)\) Halpern behavior
\cite{leustean2007rates,kohlenbach2011quantitative,sabach2017first,lieder2021convergence,park2022exact,contreras2023optimal}.
Tikhonov-regularized monotone flows also have fixed-point and Halpern
specializations \cite{bot2024tikhonov}. In this paper, the fixed-point result
is a known theorem; we use it as the simplest instance of the operator-side
tracking argument and provide a new analysis in a general normed space.

\paragraph{Tikhonov regularization for variational inequalities and monotone inclusions.}
Tikhonov regularization goes back to \cite{Tikhonov1943}, and related
regularization ideas are classical in monotone operator theory. Proximal-point
and resolvent regularization go back to
\cite{martinet1970regularisation,rockafellar1976monotone}; those methods solve
regularized resolvent or proximal subproblems, often centered at the current
iterate. A distinct iterative Tikhonov line adds a vanishing strongly monotone
term, often relative to a fixed anchor or to the origin, in variational
inequality, inclusion, Nash-equilibrium, and stochastic-approximation schemes
\cite{browder1966existence,xu2010regularization,kannan2012distributed,koshal2013regularized,shanbhag2013stochastic}.
Recent concurrent anchored-gradient analyses study closely related one-call updates in
smooth convex-concave and monotone-inclusion settings
\cite{surina2026improved,cai2026last}.

\paragraph{Extragradient and past extragradient.}
EG and PEG methods (also known as optimistic gradient descent-ascent
methods in min-max optimization) stabilize monotone Lipschitz equations where
the plain forward step (gradient descent-ascent in min-max optimization) can fail
\cite{korpelevich1976extragradient,popov1980modification}. Modern unanchored
last-iterate analyses give \(O(1/k)\) bounds on squared residuals
\cite{gorbunov2022extragradient,gorbunov2022last}. In the residual norm used in
this paper, that is an \(O(1/\sqrt{k})\) rate. The Reg-EG and Reg-PEG theorems
below are stated directly as \(O(LR/k)\) residual-norm bounds.

\paragraph{Anchored EG and PEG variants.}
Anchored EG-family variants, including EAG, FEG, flexible anchored G-EAG, and
broader accelerated EG-type frameworks, establish improved guarantees under
their own assumptions and for their own residual notions
\cite{yoon2021accelerated,lee2021fast,boct2024extra,tran2025accelerated}.
Halpern-type splitting, anchored Popov/APV variants, fast OGDA, and
moving-anchor Popov variants provide related anchored or Tikhonov mechanisms on
the PEG side
\cite{diakonikolas2020halpern,tran2021halpern,bot2025fast,alcala2025stochastic}.
These works are the closest algorithmic neighbors of the present paper. The
Reg-EG and Reg-PEG variants below belong to the same anchored/Tikhonov family
and match the accelerated residual-norm order in comparable settings, while
deriving their anchor placements directly from the operator-side rule and using
a common progress--drift--bias proof.

\section{Operator-side regularization and the Halpern iteration}
\label{sec:operator-side-regularization}

\subsection{The regularize-and-track principle}
\label{sec:framework_principle}

This section and \cref{sec:main-results} share a single design and a common
tracking pattern. The design is to regularize the queried operator with a
vanishing Tikhonov term, apply the standard iterative template to the moving
regularized object, and track the resulting residual. For Picard iteration this
recipe reproduces Halpern iteration; for the forward-step, extragradient,
and past extragradient templates, it produces anchored variants
whose anchor-evaluation pattern is different from common update-level anchored
EG/PEG placements.

For fixed-point problems we allow a general normed space \(\mathcal X\); for
monotone equations we work in a Hilbert space \(\mathcal H\). The regularized
objects are
\begin{equation}\label{eq:reg-objects}
    T_k(\vect{x}) \coloneqq (1 - \delta_k)\,T(\vect{x}) + \delta_k\,\vect{x}_0,
    \qquad
    G_k(\vect{x}) \coloneqq \eta_k\,F(\vect{x}) + \delta_k\,(\vect{x} - \vect{x}_0),
\end{equation}
where \(\vect{x}_0\) is a fixed anchor and \(0<\delta_k\le1\),
\(\delta_k\downarrow0\). The map \(T_k\) is a
\((1-\delta_k)\)-contraction; the map \(G_k\) is the scaled displacement of a
Tikhonov regularization of \(F\), with the operator step size \(\eta_k\) already
folded in. A Picard, forward, EG or PEG step taken on these moving
objects, when expanded, yields an anchored update with the anchor evaluated at
the same point as the corresponding operator query.

The analysis follows a progress--drift--bias pattern. In each setting, an
appropriately chosen residual potential \(E_k\)---the regularized fixed-point
residual for \(T_k\), or a method-specific potential built from \(G_k\)---will
be shown to satisfy a recurrence of the form
\begin{equation}\label{eq:master-recurrence}
    E_{k+1}
    \;\le\;
    (1 - \gamma\,\delta_k)\,\bigl(E_k + C\,(\delta_{k-1} - \delta_k)\,R\bigr),
\end{equation}
for problem-dependent constants \(\gamma,C>0\). The factor
\(1-\gamma\delta_k\) records progress on the current regularized problem; the
additive term records the drift between successive regularized objects. Solving
\eqref{eq:master-recurrence} via the deterministic estimate
\cref{lem:recurrence} bounds \(E_k\), after which a final step bounds the
regularization bias and recovers the original residual. The schedules below are
chosen so that progress, drift, and bias decay at the same rate.

\subsection{The fixed-point prototype: Halpern iteration}

We now run this template on its simplest instance. Let \(\mathcal X\) be a
normed space and let \(T \colon \mathcal X \to \mathcal X\) be nonexpansive with
\(\operatorname{Fix}(T)\ne\varnothing\). Pick a reference
\(\vect{x}^*\in\operatorname{Fix}(T)\). Taking a convex combination of \(T\)
and the constant map \(\vect{x}\mapsto\vect{x}_0\) gives the moving contraction
\begin{equation}
\label{eq:regularized_op}
    T_k(\vect{x})
    \coloneqq (1 - \delta_k)\,T(\vect{x}) + \delta_k\,\vect{x}_0,
\end{equation}
which is a \((1-\delta_k)\)-contraction whenever \(\delta_k\in(0,1]\). A single
Picard step on this map,
\begin{equation}
\label{eq:halpern_iteration}
    \vect{x}_{k+1} = T_k(\vect{x}_k),
\end{equation}
is exactly Halpern iteration.

The connection between Halpern's update and Browder-type contractive
approximants is classical
\cite{halpern1967fixed,browder1967convergence,wittmann1992approximation}.
Quantitative asymptotic-regularity estimates for Halpern iterations have a
longer history: proof-mining arguments gave effective rates in normed spaces
\cite{leustean2007rates,kohlenbach2011quantitative}; Sabach and Shtern obtained
an explicit \(O(1/k)\) residual bound \cite{sabach2017first}; and later work
sharpened constants or studied optimal worst-case bounds in Hilbert and normed
settings \cite{lieder2021convergence,park2022exact,contreras2023optimal}. The
interest here is the explicit conceptual viewpoint in the analysis: \(T_k\) is
contractive, so a Picard step on \(T_k\) shrinks the current regularized
residual; the schedule \(\delta_k=2/(k+2)\) then balances this progress with the
drift from \(T_{k-1}\) to \(T_k\) and the final bias back to \(T\). We give a
short proof of the following known proposition to illustrate the tracking
framework:

\begin{proposition}[Halpern residual bound]\label{prop:halpern_residual_bound}
Let \(T\) be a nonexpansive mapping with a fixed point. Let
\((\vect{x}_k)_{k\ge0}\) be the Halpern iterates
\eqref{eq:halpern_iteration}, initialized at \(\vect{x}_0\), with
\(\delta_k=2/(k+2)\). For any \(\vect{x}^*\in\operatorname{Fix}(T)\),
\[
    r_T(\vect{x}_k)
    =
    \norm{T(\vect{x}_k)-\vect{x}_k}
    \le \frac{8R}{k+1},
    \qquad k\ge1,
\]
where \(R\coloneqq\norm{\vect{x}_0-\vect{x}^*}\).
\end{proposition}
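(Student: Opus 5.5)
The plan follows the progress--drift--bias pattern of \cref{sec:framework_principle}. I take as potential the regularized residual \(E_k\coloneqq\norm{T_k(\vect{x}_k)-\vect{x}_k}=\norm{\vect{x}_{k+1}-\vect{x}_k}\). I derive a recurrence of the form \eqref{eq:master-recurrence} for it, solve that recurrence by hand for the schedule \(\delta_k=2/(k+2)\), and then pay the bias \(\norm{T(\vect{x}_k)-T_k(\vect{x}_k)}\) to return to \(r_T\).

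\emph{Step 1 (boundedness).} First I show \(\norm{\vect{x}_k-\vect{x}^*}\le R\) for all \(k\), by induction. Since \(\vect{x}^*\in\operatorname{Fix}(T)\),
\[
\vect{x}_{k+1}-\vect{x}^*=(1-\delta_k)\bigl(T(\vect{x}_k)-T(\vect{x}^*)\bigr)+\delta_k(\vect{x}_0-\vect{x}^*).
\]
Nonexpansiveness and the triangle inequality then give \(\norm{\vect{x}_{k+1}-\vect{x}^*}\le(1-\delta_k)R+\delta_kR=R\). Consequently
\[
\norm{T(\vect{x}_k)-\vect{x}_0}\le\norm{T(\vect{x}_k)-\vect{x}^*}+R\le 2R.
\]
This is the only place where the geometry enters, and it uses only the norm, so the argument is valid in a general normed space.

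\emph{Step 2 (recurrence).} Subtracting consecutive updates gives the exact identity
\[
\vect{x}_{k+2}-\vect{x}_{k+1}=(1-\delta_{k+1})\bigl(T(\vect{x}_{k+1})-T(\vect{x}_k)\bigr)+(\delta_k-\delta_{k+1})\bigl(T(\vect{x}_k)-\vect{x}_0\bigr).
\]
Applying nonexpansiveness to the first term and Step 1 to the second yields
\[
E_{k+1}\le(1-\delta_{k+1})E_k+2R(\delta_k-\delta_{k+1}).
\]
This is the progress term (the contraction factor of \(T_{k+1}\)) plus the drift term (the change \(T_k\to T_{k+1}\)). It has the shape of \eqref{eq:master-recurrence} up to indexing. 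The initial value is \(E_0=0\), because \(\delta_0=1\) gives \(T_0\equiv\vect{x}_0\).

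\emph{Step 3 (solving the recurrence).} Rather than invoke a general lemma, I solve the recurrence by telescoping. With \(\delta_k=2/(k+2)\) we have \(1-\delta_{k+1}=(k+1)/(k+3)\) and \(\delta_k-\delta_{k+1}=2/((k+2)(k+3))\). Multiplying the recurrence by \((k+2)(k+3)\) and setting \(a_k\coloneqq(k+1)(k+2)E_k\) gives \(a_{k+1}\le a_k+4R\). Since \(a_0=0\), this yields \(a_k\le4Rk\), and hence
\[
E_k\le\frac{4Rk}{(k+1)(k+2)}\le\frac{4R}{k+2}.
\]

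\emph{Step 4 (bias).} Write \(T(\vect{x}_k)-\vect{x}_k=(T_k(\vect{x}_k)-\vect{x}_k)+\delta_k\bigl(T(\vect{x}_k)-\vect{x}_0\bigr)\). Step 1 bounds the second term by \(2R\delta_k=4R/(k+2)\). Adding the bound from Step 3 gives \(r_T(\vect{x}_k)\le 8R/(k+2)\le 8R/(k+1)\).

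The main obstacle is Step 2. Because the contraction \(T_k\) changes with \(k\), the drift term must be isolated cleanly and controlled by a uniform bound on \(\norm{T(\vect{x}_k)-\vect{x}_0}\). That is why the boundedness estimate of Step 1 has to be established first. Once it is in place, the schedule makes the weighted telescoping in Step 3 exact.
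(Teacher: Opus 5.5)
Your proof is correct. It keeps the paper's boundedness and bias-removal steps, but the recurrence step is organized differently.

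The paper tracks \(E_k=\norm{\vect{x}_k-T_{k-1}(\vect{x}_k)}\), the residual of the new iterate against the map that produced it. Progress is then one Picard step on the fixed contraction \(T_k\), via \(\norm{S\vect{x}-S^2\vect{x}}\le q\norm{\vect{x}-S\vect{x}}\). This is followed by a comparison of \(T_{k-1}\) and \(T_k\) at the same point. The resulting recurrence \(E_{k+1}\le(1-\delta_k)\bigl(E_k+2(\delta_{k-1}-\delta_k)R\bigr)\) is then passed to \cref{lem:recurrence}.

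You instead track the step length \(\norm{\vect{x}_{k+1}-\vect{x}_k}=\norm{\vect{x}_k-T_k(\vect{x}_k)}\), which in the paper is only the intermediate quantity of Step 2. You derive your recurrence from the classical consecutive-difference identity, i.e.\ the split \(T_{k+1}(\vect{x}_{k+1})-T_k(\vect{x}_k)=[T_{k+1}(\vect{x}_{k+1})-T_{k+1}(\vect{x}_k)]+[T_{k+1}(\vect{x}_k)-T_k(\vect{x}_k)]\). This uses the contraction factor of \(T_{k+1}\) rather than \(T_k\), but it leaves the drift undamped. So, despite your remark, your recurrence is not literally of the form \eqref{eq:master-recurrence} that \cref{lem:recurrence} requires. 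Your exact weighted telescoping with \(a_k=(k+1)(k+2)E_k\) is the natural way to close it.

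Your argument is self-contained and gives the slightly sharper \(r_T(\vect{x}_k)\le 8R/(k+2)\), valid even at \(k=0\), because your bias term carries \(\delta_k\) rather than \(\delta_{k-1}\). What the paper's formulation buys is uniformity. The same \emph{contract on a fixed regularized object, then pay drift} template and the same lemma handle three further settings:
\begin{itemize}
\item general schedules \(\alpha/(k+\beta)\);
\item the episodic scheme with factor \((1-\delta_k)^{t_k}\);
\item the residual contractions \(\norm{G_k(\vect{x}_{k+1})}\le(1-\delta_k/2)\norm{G_k(\vect{x}_k)}\) used for the forward step, Reg-EG and Reg-PEG.
\end{itemize}
Your telescoping, by contrast, is tailored to \(\delta_k=2/(k+2)\), where the weights make it exact.
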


\begin{proof}
We show boundedness, set up a recurrence for the moving residual
\[
    E_k\coloneqq\norm{\vect{x}_k-T_{k-1}(\vect{x}_k)},
\]
solve the recurrence via \cref{lem:recurrence}, and convert back to
\(r_T(\vect{x}_k)\).

\paragraph{Step 1: Boundedness.}
For any \(\delta\in[0,1]\) and any \(\vect{x}\) with
\(\norm{\vect{x}-\vect{x}^*}\le R\), the map
\(\vect{x}\mapsto(1-\delta)T(\vect{x})+\delta\vect{x}_0\) satisfies
\[
    \norm{(1-\delta)T(\vect{x})+\delta\vect{x}_0-\vect{x}^*}
    \le
    (1-\delta)\norm{\vect{x}-\vect{x}^*}
    +\delta\norm{\vect{x}_0-\vect{x}^*}
    \le R,
\]
using \(T(\vect{x}^*)=\vect{x}^*\) and nonexpansiveness of \(T\). Since the
iteration starts at \(\vect{x}_0\), induction gives
\(\norm{\vect{x}_k-\vect{x}^*}\le R\) for all \(k\ge0\), and consequently
\[
    \norm{T(\vect{x}_k)-\vect{x}_0}
    \le
    \norm{T(\vect{x}_k)-T(\vect{x}^*)}
    +\norm{\vect{x}^*-\vect{x}_0}
    \le
    \norm{\vect{x}_k-\vect{x}^*}+R
    \le 2R.
\]

\paragraph{Step 2: The error recurrence.}
Since \(\vect{x}_{k+1}=T_k(\vect{x}_k)\) and \(T_k\) is a
\((1-\delta_k)\)-contraction,
\[
    E_{k+1}
    =
    \norm{T_k(\vect{x}_k)-T_k(T_k(\vect{x}_k))}
    \le
    (1-\delta_k)\norm{\vect{x}_k-T_k(\vect{x}_k)}.
\]
The right-hand factor differs from \(E_k\) by the drift between
\(T_{k-1}\) and \(T_k\). Inserting and removing \(T_{k-1}(\vect{x}_k)\), and
using
\[
    T_{k-1}(\vect{x})-T_k(\vect{x})
    =
    (\delta_{k-1}-\delta_k)(\vect{x}_0-T(\vect{x})),
\]
we obtain
\begin{align*}
    \norm{\vect{x}_k-T_k(\vect{x}_k)}
    &\le
    E_k+\norm{T_{k-1}(\vect{x}_k)-T_k(\vect{x}_k)} \\
    &=
    E_k+(\delta_{k-1}-\delta_k)\norm{\vect{x}_0-T(\vect{x}_k)}
    \le
    E_k+2(\delta_{k-1}-\delta_k)R.
\end{align*}
Thus, for \(k\ge1\),
\begin{equation}\label{eq:error_recurrence}
    E_{k+1}
    \le
    (1-\delta_k)\bigl(E_k+2(\delta_{k-1}-\delta_k)R\bigr).
\end{equation}

\paragraph{Step 3: Solving the recurrence.}
With \(\delta_0=1\), the map \(T_0\) is constant at \(\vect{x}_0\), so
\(\vect{x}_1=\vect{x}_0\) and \(E_1=0\). Applying \cref{lem:recurrence} with
\(\alpha=\beta=2\), \(\gamma=1\), \(c=2\), and \(c_1=2\) gives
\[
    E_k\le\frac{4R}{k+1}.
\]

\paragraph{Step 4: Removing the regularization bias.}
The original residual differs from \(E_k\) by the bias
\[
    T_{k-1}(\vect{x}_k)-T(\vect{x}_k)
    =
    \delta_{k-1}(\vect{x}_0-T(\vect{x}_k)).
\]
Therefore
\[
    \norm{\vect{x}_k-T(\vect{x}_k)}
    \le
    E_k+\delta_{k-1}\norm{\vect{x}_0-T(\vect{x}_k)}
    \le
    \frac{4R}{k+1}+\frac{2}{k+1}\cdot2R
    =
    \frac{8R}{k+1}.
\]
\end{proof}

\paragraph{Extensions.}
The proof is not tied to the exact schedule \(\delta_k=2/(k+2)\). For example,
\(\delta_k=\alpha/(k+\beta)\) with \(1<\alpha\le\beta\) gives the same
\(O(1/k)\) residual decay, with different constants in \cref{lem:recurrence}.
Nor is anchoring outside \(T\) the only fixed-point regularization covered by this
argument. The inner-anchored map
\[
    \widetilde T_k(\vect{x})
    =
    T\bigl((1-\delta_k)\vect{x}+\delta_k\vect{x}_0\bigr)
\]
is again a \((1-\delta_k)\)-contraction, and is the pure inner-anchored
special case of the Tikhonov--Mann scheme \cite{cheval2023modified}. Replacing
the constant anchor by a contraction \(g\) gives the viscosity version
\cite{moudafi2000viscosity,xu2004viscosity}; the residual bound is stated in
\cref{app:viscosity}. Finally, because each \(T_k\) is contractive, one may take
several Picard steps at a fixed regularization level before decreasing
\(\delta_k\). This episodic variant is recorded in
\cref{app:fixed-point-extensions}.

The fixed-point case has now displayed \eqref{eq:master-recurrence} in its
cleanest form. We next replace moving contractions \(T_k\) by moving strongly
monotone operators \(G_k\), and apply the same progress--drift--bias argument to
explicit monotone-equation templates.

\section{Main Results for Monotone Equations}
\label{sec:main-results}
We now instantiate the tracking principle from
\cref{sec:framework_principle} on three monotone-equation templates: the
forward step, extragradient, and past extragradient. The forward-step result is
the minimal stabilization example: the Tikhonov term turns a plain explicit
step, which may be unstable for merely monotone Lipschitz operators, into a
moving contractive problem and yields the \(O(LR/\sqrt{k})\) residual-norm rate
below. The Reg-EG and Reg-PEG results apply the same operator-side rule to the
EG and PEG templates, giving \(O(LR/k)\) residual-norm bounds in the
unconstrained monotone-equation setting. Across all three cases, the object
being tracked is built from the scaled regularized displacement
\[
    G_k(\vect{x})=\eta_k F(\vect{x})+\delta_k(\vect{x}-\vect{x}_0),
\]
with \(\eta_k\equiv\eta\) in the Reg-EG and Reg-PEG sections. The scalar
\(\eta\) or \(\eta_k\) is the step size.

The table gives an overview of the induced methods, parameter scales, and
residual-norm rates. The numerical constants in the theorem statements are
chosen for clean proofs and are not optimized.

\begin{center}
\small
\setlength{\tabcolsep}{2pt}
\renewcommand{\arraystretch}{1.35}
\begin{tabular}{@{}>{\raggedright\arraybackslash}p{0.17\textwidth}>{\raggedright\arraybackslash}p{0.33\textwidth}>{\raggedright\arraybackslash}p{0.28\textwidth}>{\raggedright\arraybackslash}p{0.16\textwidth}@{}}
\toprule
\textbf{Method} & \textbf{Update on \(G_k\)} & \textbf{Parameter scale} & \textbf{Rate for \(\norm{F(\vect{x}_k)}\)} \\
\midrule
Reg-Forward Step & \(\vect{x}_{k+1}=\vect{x}_k-G_k(\vect{x}_k)\) & \(\delta_k\asymp k^{-1}\), \(\eta_k\asymp (L\sqrt{k})^{-1}\) & \(O(LR/\sqrt{k})\) \\
Reg-EG & \(\begin{gathered}\vect{x}_{k+1/2}=\vect{x}_k-G_k(\vect{x}_k)\\ \vect{x}_{k+1}=\vect{x}_k-G_k(\vect{x}_{k+1/2})\end{gathered}\) & \(\delta_k\asymp k^{-1}\), \(\eta\asymp L^{-1}\) & \(O(LR/k)\) \\
Reg-PEG & \(\begin{gathered}\tilde{\vect{x}}_k=\vect{x}_k-G_k(\tilde{\vect{x}}_{k-1})\\ \vect{x}_{k+1}=\vect{x}_k-G_k(\tilde{\vect{x}}_k)\end{gathered}\) & \(\delta_k\asymp k^{-1}\), \(\eta\asymp L^{-1}\) & \(O(LR/k)\) \\
\bottomrule
\end{tabular}
\end{center}

\subsection{The Regularized Monotone Equation}
\label{sec:reg-monotone-eqn}

For the three monotone-equation theorem statements below, unless stated
otherwise, \(F:\mathcal H\to\mathcal H\) is a single-valued full-domain
monotone \(L\)-Lipschitz operator on a real Hilbert space with \(L>0\), and
\(\operatorname{Zer}(F)\ne\varnothing\). We fix an anchor \(\vect{x}_0\), choose
a solution \(\vect{x}^*\in\operatorname{Zer}(F)\), and write
\[
    R\coloneqq\norm{\vect{x}_0-\vect{x}^*}.
\]
For \(\delta>0\), define
\begin{equation}\label{eq:reg-eqn}
    \mathcal F_\delta(\vect{x})
    \coloneqq F(\vect{x})+\delta(\vect{x}-\vect{x}_0).
\end{equation}
We refer to \(\mathcal F_\delta(\vect{x})=0\) as the regularized monotone
equation and denote its solution, when it exists, by \(\vect{x}_\delta^*\). The
facts below record existence, the basic location bound, and stability as
\(\delta\) varies. The EG and PEG trajectory estimates use the moving
regularized zeros; the forward-step proof uses the same regularized residual
viewpoint but compares directly with the original zero \(\vect{x}^*\).

The first point is that the regularized equation has a unique single zero at
each regularization level.

\begin{lemma}[Existence of the regularized zero]\label{lem:reg-zero-existence}
For every \(\delta>0\), the regularized monotone equation
\(\mathcal F_\delta(\vect{x})=0\) has a unique solution.
\end{lemma}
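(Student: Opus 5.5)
The plan is to prove uniqueness directly from strong monotonicity, and to get existence by writing the regularized zero as the fixed point of a contraction on \(\mathcal H\), using only that \(F\) is monotone and \(L\)-Lipschitz. The reformulation is set up so that the Banach fixed point theorem applies.

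\emph{Uniqueness.} The operator \(\mathcal F_\delta\) is \(\delta\)-strongly monotone, since
\[
    \angles{\mathcal F_\delta(\vect{x})-\mathcal F_\delta(\vect{y}),\vect{x}-\vect{y}}
    =\angles{F(\vect{x})-F(\vect{y}),\vect{x}-\vect{y}}+\delta\norm{\vect{x}-\vect{y}}^2
    \ge\delta\norm{\vect{x}-\vect{y}}^2 .
\]
Suppose two zeros \(\vect{x},\vect{y}\) exist. Plugging them in makes the left-hand side zero, so \(\norm{\vect{x}-\vect{y}}^2\le0\) and hence \(\vect{x}=\vect{y}\).

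\emph{Existence.} For a step \(\tau>0\), consider the map \(S(\vect{x})\coloneqq\vect{x}-\tau\mathcal F_\delta(\vect{x})\). Its fixed points are exactly the zeros of \(\mathcal F_\delta\). The operator \(\mathcal F_\delta\) is \((L+\delta)\)-Lipschitz and \(\delta\)-strongly monotone, so expanding the square gives
\[
    \norm{S(\vect{x})-S(\vect{y})}^2
    \le\bigl(1-2\tau\delta+\tau^2(L+\delta)^2\bigr)\norm{\vect{x}-\vect{y}}^2 .
\]
Choosing \(\tau=\delta/(L+\delta)^2\) makes the factor \(1-\delta^2/(L+\delta)^2<1\). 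Thus \(S\) is a contraction on the complete space \(\mathcal H\), and the Banach fixed point theorem gives a fixed point, i.e.\ a zero of \(\mathcal F_\delta\).

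The only step needing care is this contraction estimate. It relies on cross-term control, which comes from expanding \(\norm{(\vect{x}-\vect{y})-\tau(\mathcal F_\delta(\vect{x})-\mathcal F_\delta(\vect{y}))}^2\) and bounding the inner product by strong monotonicity and the last term by the Lipschitz constant. Everything else is routine. As an alternative to this argument, one could invoke Minty's theorem: a continuous monotone operator is maximal monotone, so \(F+\delta I\) is surjective.
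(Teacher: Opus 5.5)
Your proof is correct, and the uniqueness part is the same as the paper's. The existence part takes a different route.

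The paper argues abstractly. $F$ is continuous, monotone and everywhere defined, hence maximally monotone (Bauschke--Combettes, Cor.~20.28). Then $F+\delta I$ is maximally monotone and $\delta$-strongly monotone, so Browder--Minty gives $\operatorname{ran}(F+\delta I)=\mathcal H$, and one solves $(F+\delta I)(\vect{x})=\delta\vect{x}_0$. This is exactly the alternative you mention in your last sentence.

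Your Banach fixed-point argument instead uses the Lipschitz constant quantitatively, and the computation checks out. With $\tau=\delta/(L+\delta)^2$, the map $\vect{x}\mapsto\vect{x}-\tau\mathcal F_\delta(\vect{x})$ has contraction factor $\sqrt{1-\delta^2/(L+\delta)^2}\in[0,1)$. Its fixed points are exactly the zeros of $\mathcal F_\delta$ because $\tau>0$.

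What your route buys is a self-contained, elementary and constructive proof: it shows the forward iteration on the fixed regularized problem converges linearly to $\vect{x}_\delta^*$. This is the same mechanism as Step~1 in the proof of \cref{thm:reg_forward}, with $\eta_k\leftrightarrow\tau$ and $\delta_k\leftrightarrow\tau\delta$. There, monotonicity of $F$ alone is used to drop the cross term, giving the slightly sharper factor $(1-\tau\delta)^2+\tau^2L^2\le 1-2\tau\delta+\tau^2(L+\delta)^2$.

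What the paper's route buys is generality. It needs only continuity of $F$, with no Lipschitz bound. It also carries over essentially verbatim to set-valued maximally monotone operators, which is relevant for the monotone-inclusion extensions mentioned in the conclusion. Under the paper's standing Lipschitz assumption, neither route loses anything.
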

\begin{proof}
Since \(F\) is everywhere defined and Lipschitz, it is continuous. The
maximal-monotonicity theorem for full-domain continuous monotone operators on
Hilbert spaces gives that \(F\) is maximally monotone
\cite[Cor.~20.28]{bauschke2017convex}. Therefore \(F+\delta I\) is maximally monotone and
\(\delta\)-strongly monotone. By Browder--Minty surjectivity for maximally
monotone strongly monotone operators \cite[Prop.~22.11(ii)]{bauschke2017convex},
\(\operatorname{ran}(F+\delta I)=\mathcal H\). Hence there exists
\(\vect{x}\in\mathcal H\) such that
\[
    (F+\delta I)(\vect{x})=\delta\vect{x}_0,
\]
which is equivalent to \(\mathcal F_\delta(\vect{x})=0\). Strong monotonicity
gives uniqueness.
\end{proof}

The next estimate says that this target stays in the ball determined by the
anchor and any fixed solution of the original equation. This is the elementary
geometric fact used later to remove the regularization bias.

\begin{lemma}[Boundedness of the regularized zero]\label{lem:reg-and-x0}
For every \(\delta>0\),
\[
    \norm{\vect{x}_\delta^*-\vect{x}^*}\le R,
    \qquad
    \norm{\vect{x}_\delta^*-\vect{x}_0}\le R.
\]
\end{lemma}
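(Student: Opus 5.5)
Both bounds should follow from testing monotonicity of \(F\) between the regularized zero \(\vect{x}_\delta^*\), which exists and is unique by \cref{lem:reg-zero-existence}, and the reference solution \(\vect{x}^*\). The zero conditions are
\[
    F(\vect{x}_\delta^*)=-\delta(\vect{x}_\delta^*-\vect{x}_0),
    \qquad
    F(\vect{x}^*)=\vect{0}.
\]
Inserting these into
\(\angles{F(\vect{x}_\delta^*)-F(\vect{x}^*),\vect{x}_\delta^*-\vect{x}^*}\ge0\)
and dividing by \(\delta>0\) gives the key inequality
\[
    \angles{\vect{x}_\delta^*-\vect{x}_0,\ \vect{x}_\delta^*-\vect{x}^*}\le0.
\]
Geometrically, \(\vect{x}_\delta^*\) sees the segment \([\vect{x}_0,\vect{x}^*]\) at an angle of at least \(90^\circ\). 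It therefore lies in the closed ball whose diameter is that segment.

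I will extract both bounds from this inequality using the Hilbert-space identities below. Write \(a=\vect{x}_\delta^*-\vect{x}_0\) and \(b=\vect{x}_\delta^*-\vect{x}^*\), so that \(a-b=\vect{x}^*-\vect{x}_0\) and \(\norm{a-b}=R\). Expanding,
\[
    R^2=\norm{a}^2+\norm{b}^2-2\angles{a,b}\ge\norm{a}^2+\norm{b}^2,
\]
because \(\angles{a,b}\le0\). This gives both \(\norm{a}\le R\) and \(\norm{b}\le R\) at once. In fact it gives the slightly stronger \(\norm{a}^2+\norm{b}^2\le R^2\), which I will note in case later drift estimates need it.

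I do not expect a genuine obstacle. The only care needed is the sign convention in \(\mathcal F_\delta\), so that the inner product comes out nonpositive, and the use of \(\delta>0\) when dividing. One point is worth flagging. If one instead tried to bound \(\norm{\vect{x}_\delta^*-\vect{x}_0}\) by the triangle inequality from \(\norm{\vect{x}_\delta^*-\vect{x}^*}\le R\), the result would only be \(2R\). The polarization identity above is what gives the sharp constant \(R\) for both distances simultaneously.
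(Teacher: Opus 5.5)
Your proposal is correct and matches the paper's proof: both test monotonicity between \(\vect{x}_\delta^*\) and \(\vect{x}^*\) to get \(\angles{\vect{x}_\delta^*-\vect{x}_0,\vect{x}_\delta^*-\vect{x}^*}\le0\). Both then expand \(\norm{\vect{x}^*-\vect{x}_0}^2\) to obtain the stronger bound \(\norm{\vect{x}_\delta^*-\vect{x}^*}^2+\norm{\vect{x}_\delta^*-\vect{x}_0}^2\le R^2\).
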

\begin{proof}
Since \(\mathcal F_\delta(\vect{x}_\delta^*)=0\), we have
\(F(\vect{x}_\delta^*)=-\delta(\vect{x}_\delta^*-\vect{x}_0)\). Monotonicity of
\(F\) and \(F(\vect{x}^*)=\vect{0}\) give
\[
    0
    \le
    \angles{F(\vect{x}_\delta^*)-F(\vect{x}^*),
            \vect{x}_\delta^*-\vect{x}^*}
    =
    -\delta
    \angles{\vect{x}_\delta^*-\vect{x}_0,
            \vect{x}_\delta^*-\vect{x}^*}.
\]
Thus
\(\angles{\vect{x}_\delta^*-\vect{x}^*,
          \vect{x}_\delta^*-\vect{x}_0}\le0\). Plugging
\[
    2\angles{\vect{a},\vect{b}}
    =
    \norm{\vect{a}}^2+\norm{\vect{b}}^2-\norm{\vect{a}-\vect{b}}^2
\]
with \(\vect{a}=\vect{x}_\delta^*-\vect{x}^*\) and
\(\vect{b}=\vect{x}_\delta^*-\vect{x}_0\), we obtain the stronger bound
\[
    \norm{\vect{x}_\delta^*-\vect{x}^*}^2
    +\norm{\vect{x}_\delta^*-\vect{x}_0}^2
    \le
    \norm{\vect{x}^*-\vect{x}_0}^2
    =
    R^2.
\]
Hence, both displayed bounds follow.
\end{proof}

Finally, we show that the regularized target changes slowly as the
regularization level varies. This is the form needed in the trajectory bounds for the EG and
PEG variants.

\begin{lemma}[Stability of the regularized zero]\label{lem:stability-reg}
For \(\delta_1,\delta_2>0\),
\[
    \norm{\vect{x}_{\delta_1}^*-\vect{x}_{\delta_2}^*}
    \le
    \frac{|\delta_1-\delta_2|}{\max\{\delta_1,\delta_2\}}\,R.
\]
\end{lemma}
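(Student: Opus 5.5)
Without loss of generality we assume \(\delta_1\ge\delta_2\), so the claimed bound reads \(\norm{\vect{x}_{\delta_1}^*-\vect{x}_{\delta_2}^*}\le\frac{\delta_1-\delta_2}{\delta_1}R\). Write \(\vect{u}=\vect{x}_{\delta_1}^*\) and \(\vect{v}=\vect{x}_{\delta_2}^*\); both exist and are unique by \cref{lem:reg-zero-existence}. The zero conditions give
\[
    F(\vect{u})=-\delta_1(\vect{u}-\vect{x}_0),
    \qquad
    F(\vect{v})=-\delta_2(\vect{v}-\vect{x}_0).
\]
The plan is to test monotonicity of \(F\) against \(\vect{u}-\vect{v}\) and use strong monotonicity with the larger parameter \(\delta_1\). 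The drift term then comes out multiplied by \(\delta_1-\delta_2\) and is controlled by \cref{lem:reg-and-x0}.

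Concretely, monotonicity gives \(0\le\angles{F(\vect{u})-F(\vect{v}),\vect{u}-\vect{v}}\). Substituting the zero conditions and rewriting
\[
    -\delta_1(\vect{u}-\vect{x}_0)+\delta_2(\vect{v}-\vect{x}_0)
    =
    -\delta_1(\vect{u}-\vect{v})-(\delta_1-\delta_2)(\vect{v}-\vect{x}_0)
\]
yields
\[
    \delta_1\norm{\vect{u}-\vect{v}}^2
    \le
    -(\delta_1-\delta_2)\angles{\vect{v}-\vect{x}_0,\vect{u}-\vect{v}}
    \le
    (\delta_1-\delta_2)\norm{\vect{v}-\vect{x}_0}\norm{\vect{u}-\vect{v}}.
\]
Divide by \(\norm{\vect{u}-\vect{v}}\); the case \(\vect{u}=\vect{v}\) is trivial. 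Then invoke \cref{lem:reg-and-x0}, which gives \(\norm{\vect{v}-\vect{x}_0}\le R\). This yields \(\norm{\vect{u}-\vect{v}}\le\frac{\delta_1-\delta_2}{\delta_1}R\), as claimed.

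The only delicate point is the choice of splitting. The quadratic term must carry the larger coefficient \(\delta_1=\max\{\delta_1,\delta_2\}\), which produces the \(\max\) in the denominator. The residual term must then carry \(\vect{v}-\vect{x}_0\), the displacement at the \emph{smaller} parameter; this is harmless because \cref{lem:reg-and-x0} holds uniformly in \(\delta\). Splitting the other way would give the weaker denominator \(\min\{\delta_1,\delta_2\}\), so this choice is the step to get right. Everything else is routine.
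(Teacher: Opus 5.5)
Your proof is correct and follows essentially the same argument as the paper: substitute the zero conditions into the monotonicity inequality, split off $-\delta_1(\vect{u}-\vect{v})$, and bound the remainder by Cauchy--Schwarz and \cref{lem:reg-and-x0}. The only difference is cosmetic: you assume $\delta_1\ge\delta_2$ at the outset, while the paper derives the bound with each of $\delta_1$ and $\delta_2$ on the left and keeps the tighter one, which is equivalent by symmetry.
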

\begin{proof}
Set \(\vect{u}=\vect{x}_{\delta_1}^*\),
\(\vect{v}=\vect{x}_{\delta_2}^*\), and \(\vect{d}=\vect{u}-\vect{v}\). If
\(\vect{d}=\vect{0}\), the claim is immediate. Otherwise, using
\[
    F(\vect{u})=-\delta_1(\vect{u}-\vect{x}_0),
    \qquad
    F(\vect{v})=-\delta_2(\vect{v}-\vect{x}_0),
\]
and monotonicity of \(F\),
\[
\begin{aligned}
0
&\le
\angles{F(\vect{u})-F(\vect{v}),\vect{d}} \\
&=
-\delta_1\norm{\vect{d}}^2
+(\delta_2-\delta_1)\angles{\vect{v}-\vect{x}_0,\vect{d}}.
\end{aligned}
\]
Cauchy--Schwarz and \cref{lem:reg-and-x0} then give
\[
    \delta_1\norm{\vect{d}}
    \le
    |\delta_1-\delta_2|\,\norm{\vect{v}-\vect{x}_0}
    \le
    |\delta_1-\delta_2|\,R.
\]
Repeating the same argument with the roles of \(\delta_1\) and \(\delta_2\)
swapped gives the same bound with \(\delta_2\) on the left. Taking the tighter
of the two inequalities yields the denominator \(\max\{\delta_1,\delta_2\}\).
\end{proof}

The methods below apply explicit templates to the scaled displacement
\[
    G_k(\vect{x})
    =
    \eta_kF(\vect{x})+\delta_k(\vect{x}-\vect{x}_0).
\]
Since \(\eta_kF\) is again monotone and has the same zero set as \(F\), the
preceding lemmas apply directly to \(G_k\) as the regularization of the operator
\(\eta_kF\) with parameter \(\delta_k\). In the Reg-EG and Reg-PEG sections,
where \(\eta_k\equiv\eta\), we denote the zero of \(G_k\) by \(\vect{x}_k^*\).
For the non-increasing schedules used there, \cref{lem:stability-reg} gives
\[
    \norm{\vect{x}_{k-1}^*-\vect{x}_k^*}
    \le
    \frac{\delta_{k-1}-\delta_k}{\delta_{k-1}}\,R,
\]
which is the moving-zero estimate used in the trajectory bounds. The same
operator \(G_k\) is also \(\delta_k\)-strongly monotone and has Lipschitz
constant at most \(\eta_kL+\delta_k\), which are the constants used in the
one-step progress estimates.


\subsection{The Tikhonov-Regularized Forward Step}
\label{subsec:reg-forward}

In this section, we discuss the forward step as the simplest
monotone equation template. It queries \(F\) once per iteration and updates by
the explicit rule
\[
    \vect{x}_{k+1} = \vect{x}_k - \eta_k F(\vect{x}_k),
    \qquad \eta_k > 0.
\]
When \(F=\nabla f\), this is gradient descent; when \(F\) is the saddle
operator of a minmax problem, it is gradient descent-ascent. If \(F\) is
\(1/L\)-cocoercive, then \(I-\eta F\) is nonexpansive for
\(\eta\in(0,2/L]\), and the usual averaged-map theory gives convergence under
the standard stricter step-size conditions. Under mere monotonicity and
Lipschitz continuity, however, it is well known that the forward step can
diverge. In finite dimensions, the standard example is a full-rank
skew-symmetric linear map (which includes bilinear minmax optimization)
\(F(\vect{x})=\matr{A}\vect{x}\), for which
\[
    \norm{\vect{x}-\eta F(\vect{x})}^2
    =
    \norm{\vect{x}}^2+\eta^2\norm{\matr{A}\vect{x}}^2,
\]
so every nonzero step strictly recedes from the origin. Extragradient repairs
this failure by using a second query, while PEG/Popov uses a staggered past
query. Interestingly, the results in this section show that the iteratively
regularized forward step alone achieves stabilization at the same last-iterate
residual-norm rate as these two-query methods.

Recall from \cref{sec:framework_principle} the scaled regularized displacement
\[
    G_k(\vect{x})
    \coloneqq
    \eta_k F(\vect{x})+\delta_k(\vect{x}-\vect{x}_0),
\]
where the operator step size \(\eta_k\) has been folded into \(G_k\). Running a
forward step on \(G_k\) produces the Tikhonov-regularized forward step
\begin{equation}\label{eq:reg_forward_step}
    \vect{x}_{k+1}
    =
    \vect{x}_k-G_k(\vect{x}_k)
    =
    \vect{x}_k-\eta_kF(\vect{x}_k)-\delta_k(\vect{x}_k-\vect{x}_0).
\end{equation}
Equivalently, this is the anchored forward step with operator weight
\(\eta_k\) and anchor weight \(\delta_k\), with both terms evaluated at
\(\vect{x}_k\). The update calls \(F\) once per iteration. With the schedule
\(2\eta_kL=\sqrt{\delta_k}\) used below, the map
\(\vect{x}\mapsto\vect{x}-G_k(\vect{x})\) is a contraction at each fixed
\(k\); the Tikhonov term supplies the stability that the unregularized forward
step lacks for merely monotone Lipschitz operators.


\begin{theorem}[Tikhonov-Regularized Forward Residual Bound]\label{thm:reg_forward}
Let \(F:\mathcal H\to\mathcal H\) be a monotone, \(L\)-Lipschitz operator with
\(L>0\) and with a zero \(\vect{x}^*\in\mathcal H\). Let
\(R\coloneqq\norm{\vect{x}_0-\vect{x}^*}\). Consider the iteration
\eqref{eq:reg_forward_step} with parameters satisfying
\(2\eta_kL=\sqrt{\delta_k}\) and \(\delta_k=4/(k+4)\) for \(k\ge0\). Then, for
every \(k\ge1\),
\[
    r_F(\vect{x}_k)
    =
    \norm{F(\vect{x}_k)}
    \le
    \frac{24LR}{\sqrt{k+3}}.
\]
\end{theorem}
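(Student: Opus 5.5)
The plan is to run the progress--drift--bias argument of \cref{sec:framework_principle} with the moving residual
\[
    E_k\coloneqq\norm{G_{k-1}(\vect{x}_k)},\qquad k\ge1,
\]
and then convert back via \(F(\vect{x}_k)=\bigl(G_{k-1}(\vect{x}_k)-\delta_{k-1}(\vect{x}_k-\vect{x}_0)\bigr)/\eta_{k-1}\).

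\emph{Step 0 (one-step contraction).} Since \(G_k\) is \(\delta_k\)-strongly monotone and \((\eta_kL+\delta_k)\)-Lipschitz, the map \(S_k(\vect{x})=\vect{x}-G_k(\vect{x})\) satisfies \(\norm{S_k\vect{x}-S_k\vect{y}}^2\le q_k^2\norm{\vect{x}-\vect{y}}^2\), with
\[
    q_k^2=1-2\delta_k+(\eta_kL+\delta_k)^2 .
\]
Under \(\eta_kL=\sqrt{\delta_k}/2\), I expect to show \(q_k\le1-c\,\delta_k\) for an explicit \(c\) (something like \(c=1/4\)) over the range \(\delta_k=4/(k+4)\le 4/5\) used for \(k\ge1\). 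The step \(k=0\), where \(\delta_0=1\), gives \(\vect{x}_1=\vect{x}_0-F(\vect{x}_0)/(2L)\) and will be handled by hand.

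\emph{Step 1 (boundedness).} Use \(S_k(\vect{x}^*)=\vect{x}^*-\delta_k(\vect{x}^*-\vect{x}_0)\). This gives
\[
    \norm{\vect{x}_{k+1}-\vect{x}^*}\le q_k\norm{\vect{x}_k-\vect{x}^*}+\delta_kR,
\]
so by induction \(\norm{\vect{x}_k-\vect{x}^*}\le R/c\), hence \(\norm{\vect{x}_k-\vect{x}_0}\le(1+1/c)R=:C_0R\).

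\emph{Step 2 (progress).} Set \(\vect{g}=G_k(\vect{x}_k)\), so \(\vect{x}_{k+1}=\vect{x}_k-\vect{g}\). Then
\[
    G_k(\vect{x}_{k+1})=-\bigl[(\vect{x}_{k+1}-G_k\vect{x}_{k+1})-(\vect{x}_k-G_k\vect{x}_k)\bigr],
\]
and therefore \(\norm{G_k(\vect{x}_{k+1})}\le q_k\norm{G_k(\vect{x}_k)}\).

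\emph{Step 3 (drift).} Write
\[
    G_k(\vect{x}_k)-G_{k-1}(\vect{x}_k)=(\eta_k-\eta_{k-1})F(\vect{x}_k)-(\delta_{k-1}-\delta_k)(\vect{x}_k-\vect{x}_0).
\]
The second term is at most \(C_0(\delta_{k-1}-\delta_k)R\).

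The first term is the step I expect to be the main obstacle. Bounding \(\norm{F(\vect{x}_k)}\) crudely by \(LC'R\) gives a drift of order \(k^{-3/2}R\). That only yields \(E_k=O(k^{-1/2}R)\), and after dividing by \(\eta_{k-1}\asymp 1/(L\sqrt k)\) there is no decay at all. Instead I will bound \(F(\vect{x}_k)\) through the residual itself:
\[
    \norm{F(\vect{x}_k)}\le\frac{E_k+C_0\delta_{k-1}R}{\eta_{k-1}} .
\]
Combined with \(\frac{\eta_{k-1}-\eta_k}{\eta_{k-1}}=1-\sqrt{\delta_k/\delta_{k-1}}\le\frac{\delta_{k-1}-\delta_k}{\delta_{k-1}}\asymp\frac1k\), this gives
\[
    E_{k+1}\le q_k\Bigl(1+\tfrac{\delta_{k-1}-\delta_k}{\delta_{k-1}}\Bigr)E_k+q_k\,C_1(\delta_{k-1}-\delta_k)R .
\]
The multiplicative loss \(1+O(1/k)\) must be absorbed by the progress \(1-c\delta_k=1-4c/(k+4)\). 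This needs \(4c>1\) with room to spare, so the contraction constant in Step 0 must be checked carefully; if it is too weak I would sharpen \(q_k\) using \(\eta_k^2L^2=\delta_k/4\) exactly. The outcome is a recurrence of the form \eqref{eq:master-recurrence},
\[
    E_{k+1}\le(1-\gamma\delta_k)\bigl(E_k+C(\delta_{k-1}-\delta_k)R\bigr),
\]
with some \(\gamma>1/4\).

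\emph{Step 4 (solve and remove bias).} Apply \cref{lem:recurrence} with \(\alpha=\beta=4\) and the resulting \(\gamma,C\), starting from \(E_1=\norm{G_0(\vect{x}_1)}\), which is bounded directly by \(O(R)\). This gives \(E_k=O(R/(k+3))\). Finally,
\[
    \norm{F(\vect{x}_k)}\le\frac{E_k+C_0\delta_{k-1}R}{\eta_{k-1}}=\frac{2L}{\sqrt{\delta_{k-1}}}\cdot O\!\Bigl(\frac{R}{k+3}\Bigr)=O\!\Bigl(\frac{LR}{\sqrt{k+3}}\Bigr).
\]
Tracking the constants should land at \(24\).
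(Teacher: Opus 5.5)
Your architecture is the paper's: the moving residual $E_k=\norm{G_{k-1}(\vect{x}_k)}$, contraction of $S_k$, ball invariance, progress, drift, \cref{lem:recurrence}, and bias removal. The gap is that the two quantitative steps you leave open do not close as you set them up.

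In Step~0, you treat $G_k$ only as $\delta_k$-strongly monotone and $(\eta_kL+\delta_k)$-Lipschitz. This gives $q_k^2=1-\tfrac{7}{4}\delta_k+\delta_k^{3/2}+\delta_k^2$, and the $\delta_k^{3/2}$ cross term makes the bound almost useless for small $k$. At $\delta_1=4/5$ it gives $q_1\approx0.978$, i.e.\ $c\approx0.03$, and at $k=0$ it exceeds $1$. Plugging in the exact value $\eta_k^2L^2=\delta_k/4$ does not cure this. What does cure it is writing $S_k(\vect{x})-S_k(\vect{y})=(1-\delta_k)(\vect{x}-\vect{y})-\eta_k\bigl(F(\vect{x})-F(\vect{y})\bigr)$ and dropping the cross term by monotonicity of $F$. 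That gives $q_k^2\le(1-\delta_k)^2+\delta_k/4\le(1-\delta_k/2)^2$ for all $\delta_k\in(0,1]$, including $k=0$.

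In Step~3 your threshold is also wrong. With loss factor $1+\tfrac{1}{k+4}$ and progress $1-\tfrac{4c}{k+4}$, the product is at most $1-\tfrac{4c-1}{k+4}$. So $\gamma\alpha=4c-1$, and \cref{lem:recurrence} needs $c>1/2$, not $4c>1$. Your guess $c=1/4$ gives $\gamma=0$, i.e.\ no usable contraction. Even the clean $c=1/2$ gives $\gamma\alpha=1$, where $M$ is infinite. The recurrence then only yields $E_k=O(\log k/k)$, i.e.\ an extra $\log k$ in $\norm{F(\vect{x}_k)}$.

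The missing idea is to avoid peeling $F(\vect{x}_k)$ off and bounding it in norm. Substitute $F(\vect{x}_k)=\bigl(G_{k-1}(\vect{x}_k)-\delta_{k-1}(\vect{x}_k-\vect{x}_0)\bigr)/\eta_{k-1}$ exactly, not through the triangle inequality. This gives the paper's identity $G_k(\vect{x}_k)=\tfrac{\eta_k}{\eta_{k-1}}G_{k-1}(\vect{x}_k)+\bigl(\delta_k-\sqrt{\delta_k\delta_{k-1}}\bigr)(\vect{x}_k-\vect{x}_0)$. The step-size change now contributes the factor $\eta_k/\eta_{k-1}\le1$, a gain instead of your loss $1+\tfrac{\eta_{k-1}-\eta_k}{\eta_{k-1}}$. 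The remaining coefficient satisfies $\sqrt{\delta_k\delta_{k-1}}-\delta_k\le\delta_{k-1}-\delta_k$.

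With $\norm{\vect{x}_k-\vect{x}_0}\le3R$ this gives $E_{k+1}\le(1-\delta_k/2)\bigl(E_k+3(\delta_{k-1}-\delta_k)R\bigr)$. So \cref{lem:recurrence} applies with $\alpha=\beta=4$, $\gamma=1/2$ (hence $\gamma\alpha=2$), drift constant $3$ and $c_1=1/4$. This yields $M=12R$ and $E_k\le12R/(k+3)$, and then $\norm{F(\vect{x}_k)}\le(E_k+3\delta_{k-1}R)/\eta_{k-1}\le24LR/\sqrt{k+3}$.

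Your triangle-inequality route can be rescued for the rate. The sharpened $q_k=\sqrt{1-\tfrac{7}{4}\delta_k+\delta_k^2}$ gives $c\approx0.64$ on $\delta_k\le4/5$, hence $\gamma\alpha\approx1.55$. But with the estimates you wrote it lands at a constant of roughly $50$, not $24$, so it does not prove the statement as given.
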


We record a few remarks on this result.
The rate \(O(LR/\sqrt{k})\) has the same
last-iterate residual rate as unanchored extragradient and Popov/PEG
\cite{gorbunov2022extragradient,gorbunov2022last}, attained here with the
literal one-call forward template; Reg-EG and Reg-PEG below improve it to
\(O(LR/k)\). If \(F\) is also \(1/L\)-cocoercive, the map
\(\vect{x}\mapsto\vect{x}-\eta F(\vect{x})\) is then nonexpansive and Halpern
iteration already gives \(O(LR/k)\).
In contrast to the constant \(\eta=1/(8L)\) used by
the EG and PEG variants below, we require a vanishing step size
without the extragradient or past-query stabilization.

\paragraph{Proof outline.}
The argument follows the progress-drift-bias pattern of
\cref{sec:framework_principle}, applied to the regularized residual
\[
    E_k\coloneqq\norm{G_{k-1}(\vect{x}_k)},\qquad k\ge1.
\]
Step 1 shows that \(\vect{x}\mapsto\vect{x}-G_k(\vect{x})\) is a
\((1-\delta_k/2)\)-contraction. Step 2 uses this contraction and the identity
\(G_k(\vect{x}^*)=\delta_k(\vect{x}^*-\vect{x}_0)\) to keep the trajectory in a
ball of radius \(2R\) around \(\vect{x}^*\). Step 3 closes the recurrence on
\(E_k\): it inherits the contraction factor from Step 1 and absorbs a drift
term \(3(\delta_{k-1}-\delta_k)R\) produced by the change
\(G_{k-1}\to G_k\). Finally, \cref{lem:recurrence} bounds \(E_k\), and the
identity \(G_{k-1}=\eta_{k-1}F+\delta_{k-1}(\cdot-\vect{x}_0)\) converts this
back to a bound on \(\norm{F(\vect{x}_k)}\).

\begin{proof}[Proof of \cref{thm:reg_forward}]
We carry out the three steps in turn.

\paragraph{Step 1: The regularized update is a contraction.}
Consider the map \(S_k(\vect{x})\coloneqq\vect{x}-G_k(\vect{x})\). For any
\(\vect{x},\vect{y}\in\mathcal H\),
\[
    S_k(\vect{x})-S_k(\vect{y})
    =
    (1-\delta_k)(\vect{x}-\vect{y})
    -\eta_k\bigl(F(\vect{x})-F(\vect{y})\bigr).
\]
Squaring and using monotonicity of \(F\) to drop the cross term gives
\begin{align*}
    \norm{S_k(\vect{x})-S_k(\vect{y})}^2
    &\le
    (1-\delta_k)^2\norm{\vect{x}-\vect{y}}^2
    +\eta_k^2\norm{F(\vect{x})-F(\vect{y})}^2 \\
    &\le
    \bigl((1-\delta_k)^2+\eta_k^2L^2\bigr)
    \norm{\vect{x}-\vect{y}}^2.
\end{align*}
Substituting \(\eta_k^2L^2=\delta_k/4\) and using
\(\delta_k\in(0,1]\),
\[
    (1-\delta_k)^2+\frac{\delta_k}{4}
    =
    1-2\delta_k+\delta_k^2+\frac{\delta_k}{4}
    \le
    \left(1-\frac{\delta_k}{2}\right)^2.
\]
Thus \(S_k\) is a \((1-\delta_k/2)\)-contraction.

\paragraph{Step 2: The trajectory is bounded.}
Since \(F(\vect{x}^*)=\vect{0}\),
\[
    G_k(\vect{x}^*)=\delta_k(\vect{x}^*-\vect{x}_0).
\]
Therefore,
\[
    \norm{\vect{x}_{k+1}-\bigl(\vect{x}^*-G_k(\vect{x}^*)\bigr)}
    =
    \norm{S_k(\vect{x}_k)-S_k(\vect{x}^*)}
    \le
    \left(1-\frac{\delta_k}{2}\right)\norm{\vect{x}_k-\vect{x}^*}.
\]
The triangle inequality gives
\[
    \norm{\vect{x}_{k+1}-\vect{x}^*}
    \le
    \left(1-\frac{\delta_k}{2}\right)\norm{\vect{x}_k-\vect{x}^*}
    +\delta_k R.
\]
Induction yields \(\norm{\vect{x}_k-\vect{x}^*}\le2R\) for all \(k\ge0\): the
base case is immediate, and if \(\norm{\vect{x}_k-\vect{x}^*}\le2R\), then
\[
    \norm{\vect{x}_{k+1}-\vect{x}^*}
    \le
    \left(1-\frac{\delta_k}{2}\right)2R+\delta_kR
    =
    2R.
\]
Consequently,
\[
    \norm{\vect{x}_k-\vect{x}_0}
    \le
    \norm{\vect{x}_k-\vect{x}^*}+\norm{\vect{x}^*-\vect{x}_0}
    \le
    3R.
\]

\paragraph{Step 3: Recurrence and final bound.}
Using \(\vect{x}_{k+1}=\vect{x}_k-G_k(\vect{x}_k)\) and the contraction of
\(S_k\),
\begin{align*}
    \norm{G_k(\vect{x}_{k+1})}
    &=
    \norm{S_k(\vect{x}_{k+1})-S_k(\vect{x}_k)}
    \qquad(\text{since }\vect{x}_{k+1}=S_k(\vect{x}_k)) \\
    &\le
    \left(1-\frac{\delta_k}{2}\right)\norm{\vect{x}_{k+1}-\vect{x}_k}
    =
    \left(1-\frac{\delta_k}{2}\right)\norm{G_k(\vect{x}_k)}.
\end{align*}
We next compare \(G_k(\vect{x}_k)\) to
\(E_k=\norm{G_{k-1}(\vect{x}_k)}\). From the definition of \(G_k\),
\[
    G_k(\vect{x}_k)
    =
    \frac{\eta_k}{\eta_{k-1}}G_{k-1}(\vect{x}_k)
    +
    \left(\delta_k-\frac{\eta_k}{\eta_{k-1}}\delta_{k-1}\right)
    (\vect{x}_k-\vect{x}_0).
\]
Since \(\delta_k\le\delta_{k-1}\) and
\(\eta_k/\eta_{k-1}=\sqrt{\delta_k/\delta_{k-1}}\le1\),
\[
    \left|\delta_k-\frac{\eta_k}{\eta_{k-1}}\delta_{k-1}\right|
    =
    \sqrt{\delta_k\delta_{k-1}}-\delta_k
    \le
    \delta_{k-1}-\delta_k.
\]
Together with \(\norm{\vect{x}_k-\vect{x}_0}\le3R\), this gives
\[
    \norm{G_k(\vect{x}_k)}
    \le
    \frac{\eta_k}{\eta_{k-1}}E_k
    +(\delta_{k-1}-\delta_k)\norm{\vect{x}_k-\vect{x}_0}
    \le
    E_k+3(\delta_{k-1}-\delta_k)R.
\]
Setting \(E_{k+1}=\norm{G_k(\vect{x}_{k+1})}\), we obtain, for \(k\ge1\),
\[
    E_{k+1}
    \le
    \left(1-\frac{\delta_k}{2}\right)
    \bigl(E_k+3(\delta_{k-1}-\delta_k)R\bigr).
\]
The base case follows from the contraction of \(S_0\), the identity
\(G_0(\vect{x}_0)=\eta_0F(\vect{x}_0)\), and Lipschitzness around
\(\vect{x}^*\):
\[
    E_1
    =
    \norm{G_0(\vect{x}_1)}
    \le
    \left(1-\frac{\delta_0}{2}\right)\norm{\eta_0F(\vect{x}_0)}
    \le
    \frac{R}{4}.
\]
Here \(\delta_0=1\), \(\eta_0=1/(2L)\), and
\(\norm{F(\vect{x}_0)}=\norm{F(\vect{x}_0)-F(\vect{x}^*)}\le LR\).
Applying \cref{lem:recurrence} with
\[
    \alpha=\beta=4,\qquad
    \gamma=\frac12,\qquad
    c=3,\qquad
    c_1=\frac14
\]
yields
\[
    E_k\le\frac{12R}{k+3}.
\]
Finally, since
\[
    G_{k-1}(\vect{x}_k)
    =
    \eta_{k-1}F(\vect{x}_k)
    +\delta_{k-1}(\vect{x}_k-\vect{x}_0),
\]
we have
\[
    \norm{F(\vect{x}_k)}
    \le
    \frac{E_k+3\delta_{k-1}R}{\eta_{k-1}}
    \le
    \frac{24LR}{\sqrt{k+3}},
\]
where the last inequality uses
\(\delta_{k-1}=4/(k+3)\), \(\eta_{k-1}=1/(L\sqrt{k+3})\), and
\(E_k\le12R/(k+3)\).
\end{proof}


\subsection{A Regularized Extragradient Method}
\label{subsec:reg-eg}

For a monotone Lipschitz operator \(F\) that need not be cocoercive,
extragradient provides stabilization and achieves last-iterate residual-norm
guarantees of order \(O(LR/\sqrt{k})\)
\cite{gorbunov2022extragradient}. Anchored EG-family variants, including EAG,
FEG, and the flexible anchored G-EAG family, achieve the accelerated
\(O(LR/k)\) residual-norm rate in their respective settings
\cite{yoon2021accelerated,lee2021fast,boct2024extra}. In this subsection,
we show that the operator-side substitution of \cref{sec:framework_principle}
leads to a different anchor placement and the same last-iterate
residual-norm rate \(O(LR/k)\).

Applying EG to the scaled regularized displacement
\[
    G_k(\vect{x}) \coloneqq \eta F(\vect{x}) + \delta_k(\vect{x} - \vect{x}_0)
\]
gives the regularized extragradient (Reg-EG) method:
\begin{equation} \label{eq:reg_eg}
    \vect{x}_{k+1/2} = \vect{x}_k - G_k(\vect{x}_k),
    \qquad
    \vect{x}_{k+1}   = \vect{x}_k - G_k(\vect{x}_{k+1/2}),
\end{equation}
or, equivalently, after expanding \(G_k\),
\[
    \vect{x}_{k+1/2}
    = \vect{x}_k-\eta F(\vect{x}_k)-\delta_k(\vect{x}_k-\vect{x}_0),
    \qquad
    \vect{x}_{k+1}
    = \vect{x}_k-\eta F(\vect{x}_{k+1/2})
      -\delta_k(\vect{x}_{k+1/2}-\vect{x}_0).
\]

\paragraph{Comparison with anchored EG variants.}
The relevant comparator is the base-anchored EG pattern introduced by
Yoon--Ryu's EAG method \cite{yoon2021accelerated}. With fixed anchor
\(\vect{x}_0\), the EAG and flexible G-EAG updates share, up to coefficient
choices and affine rescalings, the skeleton \cite{boct2024extra}
\[
    \vect{x}_{k+1/2}
    =
    \vect{x}_k+a_k(\vect{x}_0-\vect{x}_k)-b_kF(\vect{x}_k),
    \qquad
    \vect{x}_{k+1}
    =
    \vect{x}_k+\bar a_k(\vect{x}_0-\vect{x}_k)-\bar b_kF(\vect{x}_{k+1/2}).
\]
In this pattern, both anchor
corrections are tied to the base displacement \(\vect{x}_0-\vect{x}_k\).
Reg-EG has the same first-line form but differs in the second line:
\[
    \vect{x}_{k+1}
    =
    \vect{x}_k-\eta F(\vect{x}_{k+1/2})
    -\delta_k(\vect{x}_{k+1/2}-\vect{x}_0),
\]
where the anchor is placed at the lookahead point itself.

The main Reg-EG bound is as follows.
\begin{theorem}[Last-Iterate Residual Bound for Reg-EG] \label{thm:reg_eg}
Let \(F:\mathcal H\to\mathcal H\) be a monotone, \(L\)-Lipschitz operator with \(L>0\) and a solution \(\vect{x}^*\) such that \(F(\vect{x}^*) = \vect{0}\). Consider the regularized extragradient method \eqref{eq:reg_eg} with parameters \(\eta = \frac{1}{8L}\) and \(\delta_k = \frac{4}{k+32}\) for \(k \ge 0\). Let \(R \coloneqq \norm{\vect{x}_0 - \vect{x}^*}\). Then, for all \(k\ge1\), the iterates satisfy:
\[
    r_F(\vect{x}_k)=\norm{F(\vect{x}_k)} \le \frac{128LR}{k+31}.
\]
\end{theorem}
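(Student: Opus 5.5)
We follow the same progress--drift--bias template as in \cref{thm:reg_forward}, but with the potential built around the regularized EG step. With \(\eta=1/(8L)\) fixed, \(G_k\) is \(\delta_k\)-strongly monotone and \((\eta L+\delta_k)\)-Lipschitz. Since \(\delta_k\le 1/8\), this Lipschitz constant is at most \(1/4\). We track
\[
    E_k\coloneqq\norm{G_{k-1}(\vect{x}_k)},\qquad k\ge1.
\]
The goal is to establish \eqref{eq:master-recurrence} with \(\gamma\) a constant of order one (say \(\gamma=1/2\)), and then invoke \cref{lem:recurrence}. The rate \(O(1/k)\), rather than \(O(1/\sqrt{k})\), comes from the step size no longer vanishing. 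When we convert \(E_k\) back, we divide by the constant \(\eta\) instead of by \(\eta_{k-1}\asymp 1/\sqrt{k}\).

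\textbf{Step 1 (one-step progress for EG on a fixed strongly monotone Lipschitz map).} Fix \(k\) and write \(G=G_k\), \(\delta=\delta_k\), \(\ell\le1/4\) for its Lipschitz constant. We show
\[
    \norm{G(\vect{x}_{k+1})}\le(1-c\delta)\norm{G(\vect{x}_k)}
\]
for an absolute constant \(c\), aiming for \(c=1/2\).

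The approach is to write \(\vect{x}_{k+1}-\vect{x}_k=-G(\vect{x}_{k+1/2})\) and \(\vect{x}_{k+1/2}-\vect{x}_k=-G(\vect{x}_k)\). The lookahead is close to the update: Lipschitzness gives
\[
    \norm{G(\vect{x}_{k+1/2})-G(\vect{x}_k)}\le\ell\norm{G(\vect{x}_k)},
    \qquad
    \norm{\vect{x}_{k+1}-\vect{x}_{k+1/2}}\le\ell\norm{G(\vect{x}_k)}.
\]
Expand \(\norm{G(\vect{x}_{k+1})}^2\) around \(G(\vect{x}_k)\). Strong monotonicity applied to the pair \((\vect{x}_{k+1},\vect{x}_k)\) yields
\[
    \angles{G(\vect{x}_{k+1})-G(\vect{x}_k),\,G(\vect{x}_{k+1/2})}\le-\delta\norm{G(\vect{x}_{k+1/2})}^2.
\]
We then replace \(G(\vect{x}_{k+1/2})\) by \(G(\vect{x}_k)\) at a cost of \(O(\ell)\) relative error, and bound \(\norm{G(\vect{x}_{k+1})-G(\vect{x}_k)}^2\le\ell^2\norm{G(\vect{x}_{k+1/2})}^2\).

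\emph{This is the main obstacle.} The naive expansion leaves a term of size \(\ell^2\norm{G(\vect{x}_k)}^2\) that does not scale with \(\delta\). Such a term would destroy the \((1-c\delta)\) factor. To avoid it, I would follow the standard last-iterate EG argument (as in Gorbunov et al.), which shows the residual is nonincreasing for monotone Lipschitz maps with \(\eta L\) small. That argument combines monotonicity at the three pairs \((\vect{x}_k,\vect{x}_{k+1})\), \((\vect{x}_{k+1/2},\vect{x}_{k+1})\), \((\vect{x}_k,\vect{x}_{k+1/2})\) with the Lipschitz bound \(\norm{G(\vect{x}_{k+1})-G(\vect{x}_{k+1/2})}\le\ell\norm{\vect{x}_{k+1}-\vect{x}_{k+1/2}}\), so that all \(\ell^2\) terms cancel into a nonpositive sum of squares. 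The extra \(\delta\norm{\cdot}^2\) terms contributed by strong monotonicity then supply the contraction. If the \(\ell^2\) terms cannot be fully cancelled, the fallback is to run the argument on a modified potential, e.g. \(\norm{G(\vect{x}_k)}^2\) plus a multiple of \(\norm{G(\vect{x}_{k+1/2})-G(\vect{x}_k)}^2\), which absorbs the leftover.

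\textbf{Step 2 (boundedness).} We show \(\norm{\vect{x}_k-\vect{x}_k^*}\le R\) or \(\le 2R\), by the same kind of one-step EG contraction toward the zero \(\vect{x}^*_k\) of \(G_k\). The moving zero is handled by \cref{lem:stability-reg}, and \cref{lem:reg-and-x0} then gives \(\norm{\vect{x}_k-\vect{x}_0}\le 3R\). As a simpler alternative, mimic Step 2 of \cref{thm:reg_forward} directly with \(\vect{x}^*\), using \(G_k(\vect{x}^*)=\delta_k(\vect{x}^*-\vect{x}_0)\).

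\textbf{Step 3 (drift, recurrence, bias).} Since \(\eta\) is constant,
\[
    G_k(\vect{x}_k)=G_{k-1}(\vect{x}_k)+(\delta_k-\delta_{k-1})(\vect{x}_k-\vect{x}_0),
\]
so \(\norm{G_k(\vect{x}_k)}\le E_k+3(\delta_{k-1}-\delta_k)R\). Combined with Step 1 this gives
\[
    E_{k+1}\le(1-\tfrac12\delta_k)\bigl(E_k+3(\delta_{k-1}-\delta_k)R\bigr).
\]
The base case is \(E_1\le\norm{G_0(\vect{x}_0)}\le\eta LR=R/8\).

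We then apply \cref{lem:recurrence} with \(\alpha=4\), \(\beta=32\), \(\gamma=1/2\), \(c=3\). Note that \(\gamma\alpha=2>1\), which is what makes \(\delta_k=4/(k+32)\) give an \(O(R/k)\) bound. The result should be \(E_k\le C'R/(k+31)\) for \(C'\approx 4\).

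Finally, the bias is removed via
\[
    \norm{F(\vect{x}_k)}\le\frac{E_k+3\delta_{k-1}R}{\eta}=8L\bigl(E_k+3\delta_{k-1}R\bigr),
\]
with \(\delta_{k-1}=4/(k+31)\). Collecting constants should give \(128LR/(k+31)\); the exact constant depends on how tight Steps 1 and 2 come out.
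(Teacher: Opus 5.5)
Your outline follows the paper's architecture: a fixed-$G_k$ residual contraction, a trajectory bound through the moving zeros, the drift identity, \cref{lem:recurrence}, and bias removal. As written, however, it does not prove the stated bound, for two concrete reasons. \emph{First, Step 1 is left open.} The obstacle you describe comes from expanding around $G_k(\vect{x}_k)$ and using Lipschitzness on the pair $(\vect{x}_{k+1},\vect{x}_k)$. No three-pair cancellation or modified potential is needed. With $\vect{v}_1=G_k(\vect{x}_k)$, $\vect{v}_2=G_k(\vect{x}_{k+1/2})$, $\vect{v}_3=G_k(\vect{x}_{k+1})$, pivot on $\vect{v}_2$:
\[
    \norm{\vect{v}_3}^2-\norm{\vect{v}_1}^2
    =\norm{\vect{v}_3-\vect{v}_2}^2-\norm{\vect{v}_1-\vect{v}_2}^2
    +2\angles{\vect{v}_3-\vect{v}_1,\vect{v}_2}.
\]
Because $\vect{x}_{k+1}-\vect{x}_{k+1/2}=\vect{v}_1-\vect{v}_2$, Lipschitzness on $(\vect{x}_{k+1},\vect{x}_{k+1/2})$ gives $\norm{\vect{v}_3-\vect{v}_2}\le\ell\norm{\vect{v}_1-\vect{v}_2}$. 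So the first two terms are bounded by $(\ell^2-1)\norm{\vect{v}_1-\vect{v}_2}^2\le0$ and can simply be dropped. Your strong-monotonicity inequality bounds the last term by $-2\delta_k\norm{\vect{v}_2}^2$. Your lookahead bound gives $\norm{\vect{v}_2}\ge(1-\ell)\norm{\vect{v}_1}\ge\tfrac34\norm{\vect{v}_1}$. Hence $\norm{\vect{v}_3}^2\le(1-\delta_k)\norm{\vect{v}_1}^2$, which is the factor $1-\delta_k/2$. You had every ingredient; only this assembly was missing.

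\emph{Second, your constants give $192$, not $128$.} With $\norm{\vect{x}_k-\vect{x}_0}\le3R$ you have $c=3$, so \cref{lem:recurrence} gives $M=R\max\{3\cdot4/(2-1),\,32/8\}=12R$, not roughly $4R$. Then $\norm{F(\vect{x}_k)}\le8L(12R+12R)/(k+31)=192LR/(k+31)$. The stated constant requires $\norm{\vect{x}_k-\vect{x}_0}\le2R$, which gives $c=2$, $M=8R$, bias $2\delta_{k-1}R=8R/(k+31)$, and total $8L\cdot16R/(k+31)$. The paper gets $2R$ by proving $\norm{\vect{x}_{k+1}-\vect{x}_k^*}\le R$ inductively. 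It uses a separate EG \emph{distance} contraction toward the zero of a fixed strongly monotone map, with step $1$, Lipschitz constant $1/4$ and factor $1-\delta_k/2$; you would still need to prove this, and it is not your Step 1. Combined with $\norm{\vect{x}_{k-1}^*-\vect{x}_k^*}\le R/(k+32)\le(\delta_k/2)R$ from \cref{lem:stability-reg}, this gives $(1-\delta_k/2)(1+\delta_k/2)R\le R$. \Cref{lem:reg-and-x0} then adds $\norm{\vect{x}_k^*-\vect{x}_0}\le R$. Your ``simpler alternative'' of copying Step 2 of \cref{thm:reg_forward} with $\vect{x}^*$ does not transfer. That step relied on a global contraction of $S_k$ at the pair $(\vect{x}_k,\vect{x}^*)$, and no such global contraction is available for the EG update map; its contraction estimate holds only toward $\vect{x}_k^*$.
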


\paragraph{Proof outline.}
The proof follows the shared progress--drift--bias architecture of
\cref{sec:framework_principle}, with one extra ingredient relative to the
regularized forward-step argument. The Reg-EG update map
\(\vect{x}\mapsto \vect{x}-G_k(\vect{x}-G_k(\vect{x}))\) is not a contraction,
so the one-step contraction argument used for the forward step does not apply
directly. We instead use two fixed-\(G_k\) EG estimates. The first,
\cref{lem:eg-linear-conv}, contracts the distance to the zero of a fixed
strongly monotone Lipschitz operator; together with stability of the moving
regularized zeros, this gives the trajectory bound
\(\norm{\vect{x}_k-\vect{x}_0}\le2R\) in \cref{lem:traj-bound}. The second,
proved below, is the residual contraction
\[
    \norm{G_k(\vect{x}_{k+1})}
    \le
    \left(1-\frac{\delta_k}{2}\right)\norm{G_k(\vect{x}_k)}.
\]
The drift from \(G_{k-1}\) to \(G_k\) then yields a recurrence for
\(r_{k-1}(\vect{x}_k)=\norm{G_{k-1}(\vect{x}_k)}\), controlled by the trajectory
bound and solved by \cref{lem:recurrence}. Finally,
\(G_{k-1}=\eta F+\delta_{k-1}(\cdot-\vect{x}_0)\) removes the regularization
bias and recovers the stated bound on \(r_F(\vect{x}_k)\).

\subsubsection{Preparatory Lemmas}

Linear convergence of extragradient in strongly monotone Lipschitz variational inequalities is known
\cite{tseng1995linear}. We include the following lemma for completeness with the constants needed below.

\begin{lemma}[Linear Convergence of Extragradient]\label{lem:eg-linear-conv}
Let \(G: \mathcal{H} \to \mathcal{H}\) be \(\bar L\)-Lipschitz with \(\bar L>0\) and \(\mu\)-strongly monotone with \(\mu>0\). Let \(G(\vect{x}_G^*)=0\), and let
\[
    \vect{x}_{t+1/2}=\vect{x}_t-\alpha G(\vect{x}_t),
    \qquad
    \vect{x}_{t+1}=\vect{x}_t-\alpha G(\vect{x}_{t+1/2})
\]
with \(0<\alpha\le 1/(4\bar L)\). Then
\[
    \norm{\vect{x}_{t+1} - \vect{x}_G^*}
    \le
    \left(1-\frac{\alpha\mu}{2}\right)
    \norm{\vect{x}_t - \vect{x}_G^*}.
\]
\end{lemma}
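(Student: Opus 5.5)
The plan is the standard extragradient distance-expansion argument, run against the zero \(\vect{x}_G^*\) and closed with strong monotonicity at the lookahead point. Write \(\vect{x}=\vect{x}_t\), \(\vect{y}=\vect{x}_{t+1/2}\), \(\vect{x}^+=\vect{x}_{t+1}\) and \(\vect{z}=\vect{x}_G^*\). Since \(\vect{x}^+=\vect{x}-\alpha G(\vect{y})\), expanding the square gives
\[
    \norm{\vect{x}^+-\vect{z}}^2
    =\norm{\vect{x}-\vect{z}}^2
    -2\alpha\angles{G(\vect{y}),\vect{y}-\vect{z}}
    -2\alpha\angles{G(\vect{y}),\vect{x}-\vect{y}}
    +\alpha^2\norm{G(\vect{y})}^2 .
\]
I will handle the three correction terms as follows.

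\textbf{The middle inner product.} Use \(\vect{x}-\vect{y}=\alpha G(\vect{x})\) and the three-point identity for the triple \(\vect{x},\vect{y},\vect{x}^+\). This is the usual EG manipulation:
\[
    -2\alpha\angles{G(\vect{y}),\vect{x}-\vect{y}}+\alpha^2\norm{G(\vect{y})}^2
    =-\norm{\vect{x}-\vect{y}}^2+\norm{\vect{x}^+-\vect{y}}^2 .
\]
To verify it, write \(\vect{x}^+-\vect{y}=\alpha(G(\vect{x})-G(\vect{y}))\) and expand. Lipschitz continuity then gives \(\norm{\vect{x}^+-\vect{y}}\le\alpha\bar L\norm{\vect{x}-\vect{y}}\). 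Hence the whole combination is at most \(-(1-\alpha^2\bar L^2)\norm{\vect{x}-\vect{y}}^2\).

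\textbf{The first inner product.} Strong monotonicity with \(G(\vect{z})=0\) gives \(-2\alpha\angles{G(\vect{y}),\vect{y}-\vect{z}}\le-2\alpha\mu\norm{\vect{y}-\vect{z}}^2\). So far we have
\[
    \norm{\vect{x}^+-\vect{z}}^2
    \le\norm{\vect{x}-\vect{z}}^2
    -2\alpha\mu\norm{\vect{y}-\vect{z}}^2
    -(1-\alpha^2\bar L^2)\norm{\vect{x}-\vect{y}}^2 .
\]

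\textbf{Converting back to \(\vect{x}\).} The main obstacle is that the contraction appears at \(\vect{y}\), not at \(\vect{x}\). I will use \(\norm{\vect{y}-\vect{z}}^2\ge\tfrac12\norm{\vect{x}-\vect{z}}^2-\norm{\vect{x}-\vect{y}}^2\). The negative term \(-(1-\alpha^2\bar L^2)\norm{\vect{x}-\vect{y}}^2\) then has to absorb the extra \(+2\alpha\mu\norm{\vect{x}-\vect{y}}^2\). This works because \(\mu\le\bar L\) and \(\alpha\bar L\le1/4\), so
\[
    2\alpha\mu+\alpha^2\bar L^2\le\tfrac12+\tfrac1{16}<1 .
\]
The result is \(\norm{\vect{x}^+-\vect{z}}^2\le(1-\alpha\mu)\norm{\vect{x}-\vect{z}}^2\).

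\textbf{Conclusion.} Since \(\alpha\mu\le1/4\), we have \(1-\alpha\mu\le(1-\alpha\mu/2)^2\). Taking square roots gives the stated factor \(1-\alpha\mu/2\).
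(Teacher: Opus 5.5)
Your proof is correct and follows the paper's argument. The expansion, strong monotonicity at the lookahead point, and the EG cancellation are the same; your three-point identity is the paper's polarization bound on $2\alpha^2\angles{G(\vect{x}),G(\vect{y})}$. Together these give the paper's intermediate inequality $\norm{\vect{x}^+-\vect{z}}^2\le\norm{\vect{x}-\vect{z}}^2-2\alpha\mu\norm{\vect{y}-\vect{z}}^2-(1-\alpha^2\bar L^2)\norm{\vect{x}-\vect{y}}^2$.

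The only difference is the final conversion. The paper lower-bounds $\norm{\vect{y}-\vect{z}}^2\ge(1-2\alpha\bar L)\norm{\vect{x}-\vect{z}}^2$ directly via Cauchy--Schwarz and Lipschitzness and discards the $\norm{\vect{x}-\vect{y}}^2$ term. You instead use $\norm{\vect{y}-\vect{z}}^2\ge\frac12\norm{\vect{x}-\vect{z}}^2-\norm{\vect{x}-\vect{y}}^2$ and absorb the error into that retained negative term. Both yield the factor $1-\alpha\mu$.
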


\begin{proof}
We first establish a lower bound for $\norm{\vect{x}_{t+1/2} - \vect{x}_G^*}^2$:
\begin{align*}
\norm{\vect{x}_{t+1/2} - \vect{x}_G^*}^2
&= \norm{\vect{x}_t - \vect{x}_G^* - \alpha (G(\vect{x}_t) - G(\vect{x}_G^*))}^2 \\
&\ge \norm{\vect{x}_t - \vect{x}_G^*}^2 - 2\alpha\bar L \norm{\vect{x}_t - \vect{x}_G^*}^2 \\
&= \parenth{1 - 2\alpha\bar L}\norm{\vect{x}_t - \vect{x}_G^*}^2.
\end{align*}
The inequality uses Cauchy-Schwarz and the \(\bar L\)-Lipschitz continuity of \(G\).

Next, we bound $\norm{\vect{x}_{t+1} - \vect{x}_G^*}^2$:
\begin{align}
\norm{\vect{x}_{t+1} - \vect{x}_G^*}^2
&= \norm{\vect{x}_t - \vect{x}_G^* - \alpha G(\vect{x}_{t+1/2})}^2 \nonumber \\
&= \norm{\vect{x}_t - \vect{x}_G^*}^2
 - 2\alpha \langle \vect{x}_t - \vect{x}_G^*, G(\vect{x}_{t+1/2}) \rangle
 + \alpha^2 \norm{G(\vect{x}_{t+1/2})}^2. \label{eq:main_expansion}
\end{align}
We decompose the inner product term as follows:
\[
\langle \vect{x}_t - \vect{x}_G^*, G(\vect{x}_{t+1/2}) \rangle
= \langle \vect{x}_{t+1/2} - \vect{x}_G^*, G(\vect{x}_{t+1/2}) \rangle
 + \alpha \langle G(\vect{x}_t), G(\vect{x}_{t+1/2}) \rangle.
\]
By \(\mu\)-strong monotonicity, the first term on the right-hand side is bounded below by \(\mu \norm{\vect{x}_{t+1/2} - \vect{x}_G^*}^2\). For the second term, we use the identity \(2\langle \vect{a}, \vect{b} \rangle = \norm{\vect{a}}^2 + \norm{\vect{b}}^2 - \norm{\vect{a}-\vect{b}}^2\) and the \(\bar L\)-Lipschitz property:
\begin{align*}
2\alpha^2 \langle G(\vect{x}_t), G(\vect{x}_{t+1/2}) \rangle
&\geq \alpha^2 \parenth{\norm{G(\vect{x}_t)}^2 + \norm{G(\vect{x}_{t+1/2})}^2 - \bar L^2 \norm{\vect{x}_t - \vect{x}_{t+1/2}}^2} \\
&= \alpha^2(1 - \alpha^2 \bar L^2)\norm{G(\vect{x}_t)}^2 + \alpha^2\norm{G(\vect{x}_{t+1/2})}^2.
\end{align*}
Substituting these bounds into \eqref{eq:main_expansion}:
\begin{align*}
\norm{\vect{x}_{t+1} - \vect{x}_G^*}^2
&\leq \norm{\vect{x}_t - \vect{x}_G^*}^2
 - 2\alpha\mu \norm{\vect{x}_{t+1/2} - \vect{x}_G^*}^2
 - \alpha^2(1 - \alpha^2\bar L^2)\norm{G(\vect{x}_t)}^2.
\end{align*}
Dropping the final non-positive term and using \(1-2\alpha\bar L\ge1/2\), we get
\[
\norm{\vect{x}_{t+1} - \vect{x}_G^*}^2
\le
\parenth{1-\alpha\mu}\norm{\vect{x}_t - \vect{x}_G^*}^2.
\]
Since \(\mu\le\bar L\) and \(\alpha\le1/(4\bar L)\), we have \(\alpha\mu\in[0,1]\). Taking square roots and using \(\sqrt{1-z}\le1-z/2\) with \(z=\alpha\mu\) proves the claim.
\end{proof}

\begin{lemma}[Boundedness of the Trajectory]\label{lem:traj-bound}
Let \(F:\mathcal H\to\mathcal H\) be monotone and \(L\)-Lipschitz with \(L>0\), and let \(\vect{x}^*\) satisfy \(F(\vect{x}^*)=\vect{0}\). Set \(R\coloneqq\norm{\vect{x}_0-\vect{x}^*}\). Let \((\vect{x}_k)_{k\ge0}\) be generated by the Reg-EG scheme \eqref{eq:reg_eg} with parameters \(\eta=\frac{1}{8L}\) and a non-increasing sequence \((\delta_k)\) satisfying \(\delta_0 \le \frac{1}{8}\). Denote by \(\vect{x}_k^*\) the unique zero of \(G_k\). If \(\delta_k = \frac{c}{k+8c}\) for some \(c \ge 2\), then for every \(k\ge0\), we have \(\norm{\vect{x}_k - \vect{x}_0} \le 2R\).
\end{lemma}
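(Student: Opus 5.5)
The plan is to track the distance from the iterate to the moving regularized zero, \(D_k\coloneqq\norm{\vect{x}_k-\vect{x}_k^*}\), and to show by induction that \(D_k\le R\) for all \(k\ge0\). Once that holds, \cref{lem:reg-and-x0} (applied to the operator \(\eta F\) with parameter \(\delta_k\)) gives \(\norm{\vect{x}_k^*-\vect{x}_0}\le R\). The triangle inequality then yields \(\norm{\vect{x}_k-\vect{x}_0}\le D_k+\norm{\vect{x}_k^*-\vect{x}_0}\le 2R\), which is the claim.

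\emph{Base case.} We have \(D_0=\norm{\vect{x}_0-\vect{x}_0^*}\le R\), again by \cref{lem:reg-and-x0}.

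\emph{One-step contraction at a frozen level.} I would apply \cref{lem:eg-linear-conv} to \(G=G_k\) with step \(\alpha=1\), since a Reg-EG step is exactly an EG step on \(G_k\) with unit step.
\begin{itemize}
\item \(G_k\) is \(\mu=\delta_k\)-strongly monotone.
\item \(G_k\) is \(\bar L\)-Lipschitz with \(\bar L\le\eta L+\delta_k\le\tfrac18+\delta_0\le\tfrac14\). This uses \(\eta=1/(8L)\), monotonicity of \((\delta_k)\), and \(\delta_0\le 1/8\).
\item Hence \(\alpha=1\le 1/(4\bar L)\) is admissible. If \(\bar L<1/4\), the condition holds a fortiori; one can always take \(\bar L=1/4\) as a valid Lipschitz bound.
\end{itemize}
The lemma then gives
\[
\norm{\vect{x}_{k+1}-\vect{x}_k^*}\le\Bigl(1-\frac{\delta_k}{2}\Bigr)D_k .
\]

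\emph{Drift of the zero.} By \cref{lem:stability-reg} with \(\delta_{k+1}\le\delta_k\), so that the maximum in the denominator is \(\delta_k\), we get \(\norm{\vect{x}_k^*-\vect{x}_{k+1}^*}\le\frac{\delta_k-\delta_{k+1}}{\delta_k}R\). Combining with the previous step,
\[
D_{k+1}\le\Bigl(1-\frac{\delta_k}{2}\Bigr)D_k+\frac{\delta_k-\delta_{k+1}}{\delta_k}R .
\]

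\emph{Closing the induction (the main point to check).} Under the hypothesis \(D_k\le R\), it suffices to have
\[
\frac{\delta_k-\delta_{k+1}}{\delta_k}\le\frac{\delta_k}{2},
\]
because then \(D_{k+1}\le(1-\delta_k/2)R+(\delta_k/2)R=R\). For \(\delta_k=c/(k+8c)\):
\begin{itemize}
\item The left side equals \(1/(k+8c+1)\).
\item So the condition is \(2(k+8c)\le c(k+8c+1)\).
\item This holds for all \(k\ge0\) when \(c\ge2\): for \(c=2\) it reads \(2k+32\le 2k+34\), and for larger \(c\) the right side grows faster in both \(k\) and \(c\).
\end{itemize}

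Compatibility with the other hypotheses: \(\delta_0=1/8\) for every \(c\), and the schedule is non-increasing, so the standing hypotheses are consistent with this choice. The Reg-EG instance \(\delta_k=4/(k+32)\) corresponds to \(c=4\).

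This balance between the contraction gain \(\delta_k/2\) and the relative drift \(1/(k+8c+1)\) is the only delicate point. It is exactly why the schedule has the shifted form \(c/(k+8c)\) with \(c\ge2\): the shift keeps \(\delta_0\le 1/8\), which gives the Lipschitz bound needed for \(\alpha=1\).
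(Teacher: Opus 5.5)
Your proof is correct and follows essentially the same route as the paper: the frozen-level EG contraction from \cref{lem:eg-linear-conv} with \(\alpha=1\), \(\bar L=1/4\), \(\mu=\delta_k\), the moving-zero bound from \cref{lem:stability-reg}, an induction keeping the distance to a regularized zero below \(R\), and a final triangle inequality with \cref{lem:reg-and-x0}. The only difference is bookkeeping. The paper inducts on \(\norm{\vect{x}_k-\vect{x}_{k-1}^*}\) and adds the drift before contracting, closing via \((1-a)(1+q)\le1\) with \(q=1/(k+8c)\); you induct on \(\norm{\vect{x}_k-\vect{x}_k^*}\) and add the drift \(1/(k+8c+1)\) after contracting, which also closes under \(c\ge2\).
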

\begin{proof}
At phase \(k\), the algorithm performs one EG step on the operator \(G_k\). This operator is \(\delta_k\)-strongly monotone and has Lipschitz constant at most \(\eta L+\delta_k\). With our choice of \(\eta = 1/(8L)\) and \(\delta_k \le 1/8\), we have \(\eta L+\delta_k \le 1/4\). We can apply Lemma~\ref{lem:eg-linear-conv} with \(G=G_k\), \(\mu=\delta_k\), \(\bar L=1/4\), and step size \(\alpha=1\). This gives the local contraction around the zero \(\vect{x}_k^*\):
\[
    \norm{\vect{x}_{k+1} - \vect{x}_k^*}
    \le
    \left(1-\frac{\delta_k}{2}\right)
    \norm{\vect{x}_k - \vect{x}_k^*}.
\]
For \(k\ge1\), the triangle inequality and Lemma~\ref{lem:stability-reg},
applied to the monotone operator \(\eta F\), give the following bound, which
handles the shift from \(\vect{x}_k^*\) to \(\vect{x}_{k-1}^*\):
\begin{align*}
    \norm{\vect{x}_k - \vect{x}_k^*} &\le \norm{\vect{x}_k - \vect{x}_{k-1}^*} + \norm{\vect{x}_{k-1}^* - \vect{x}_k^*} \\
    &\le \norm{\vect{x}_k - \vect{x}_{k-1}^*} + \frac{\delta_{k-1}-\delta_k}{\delta_{k-1}} \norm{\vect{x}^*-\vect{x}_0}.
\end{align*}
Combining these yields the recurrence:
\[
    \norm{\vect{x}_{k+1} - \vect{x}_k^*} \le \left(1 - \frac{\delta_k}{2}\right) \left(\norm{\vect{x}_k - \vect{x}_{k-1}^*} + \frac{\delta_{k-1} - \delta_k}{\delta_{k-1}} R \right),\qquad k\ge1.
\]
We show \(\norm{\vect{x}_k - \vect{x}_{k-1}^*} \le R\) by induction for \(k\ge1\). For the base case \(k=1\), the local contraction gives \(\norm{\vect{x}_1-\vect{x}_0^*}\le\norm{\vect{x}_0-\vect{x}_0^*}\), and Lemma~\ref{lem:reg-and-x0} gives \(\norm{\vect{x}_0-\vect{x}_0^*}\le R\). For the inductive step, assume \(\norm{\vect{x}_k - \vect{x}_{k-1}^*} \le R\). With \(\delta_k = \frac{c}{k+8c}\), we have
\[
\frac{\delta_{k-1}-\delta_k}{\delta_{k-1}}=\frac{1}{k+8c},
\qquad
\frac{\delta_k}{2}=\frac{c}{2(k+8c)}\ge\frac{1}{k+8c},
\]
where the last inequality uses \(c\ge2\). The recurrence therefore gives \(\norm{\vect{x}_{k+1} - \vect{x}_k^*} \le R\).
Indeed, with \(a=\delta_k/2\) and \(q=(\delta_{k-1}-\delta_k)/\delta_{k-1}\), the displayed inequalities give \(q\le a\), and hence \((1-a)(1+q)\le(1-a)(1+a)\le1\).
The case \(k=0\) in the lemma is immediate, since \(\norm{\vect{x}_0-\vect{x}_0}=0\). For \(k\ge0\), the established induction bound at index \(k+1\), Lemma~\ref{lem:reg-and-x0} applied through the scaled-operator reduction stated after \cref{lem:stability-reg}, and the triangle inequality give
\[
    \norm{\vect{x}_{k+1} - \vect{x}_0} \le \norm{\vect{x}_{k+1} - \vect{x}_k^*} + \norm{\vect{x}_k^* - \vect{x}_0} \le R + R = 2R.
\]
\end{proof}

\subsubsection{Proof of Theorem~\ref{thm:reg_eg}}
\noindent\textbf{Step 1: Fixed-operator residual contraction.}
For a fixed phase \(k\), define the following shorthand notation for the residuals at step $k$:
\[
  \vect{v}_1 = G_{k}(\vect{x}_k),\quad
  \vect{v}_2 = G_{k}(\vect{x}_{k+1/2}),\quad
  \vect{v}_3 = G_{k}(\vect{x}_{k+1}).
\]
A direct expansion of the difference of squared norms yields:
\begin{align*}
  \|\vect{v}_3\|^2 - \|\vect{v}_1\|^2
  &= \bigl\|(\vect{v}_3 - \vect{v}_2) + \vect{v}_2\bigr\|^2
   - \bigl\|(\vect{v}_1 - \vect{v}_2) + \vect{v}_2\bigr\|^2 \\[-0.3em]
  &= \|\vect{v}_3 - \vect{v}_2\|^2 \;-\;\|\vect{v}_1 - \vect{v}_2\|^2
   \;+\;2\bigl\langle \vect{v}_3 - \vect{v}_1,\;\vect{v}_2\bigr\rangle \\[-0.3em]
  &\le \bigl((\eta L + \delta_{k})^2 - 1\bigr)\,\|\vect{v}_1 - \vect{v}_2\|^2
   \;-\;2\,\delta_{k}\,\|\vect{v}_2\|^2,
\end{align*}
where the final inequality uses Lipschitz continuity and strong monotonicity of
$G_k$. For Lipschitz continuity,
\[
\|\vect{v}_3 - \vect{v}_2\|\le(\eta L + \delta_{k})\|\vect{x}_{k+1}-\vect{x}_{k+1/2}\|
=(\eta L + \delta_{k})\,\|\vect{v}_1 - \vect{v}_2\|.
\]
For strong monotonicity, since \(\vect{x}_{k+1}-\vect{x}_k=-\vect{v}_2\),
\[
\angles{\vect{v}_3-\vect{v}_1,-\vect{v}_2}
=
\angles{G_k(\vect{x}_{k+1})-G_k(\vect{x}_k),\vect{x}_{k+1}-\vect{x}_k}
\ge
\delta_k\norm{\vect{x}_{k+1}-\vect{x}_k}^2
=
\delta_k\norm{\vect{v}_2}^2.
\]
Thus \(\langle \vect{v}_3-\vect{v}_1,\vect{v}_2\rangle\le -\delta_k\norm{\vect{v}_2}^2\).

We can further simplify this expression. Given our choice of parameters, we have:
\[
1 - (\eta L + \delta_{k})\;\ge\; 1 - (\eta L + \delta_0) \;=\; \tfrac34.
\]
Furthermore, applying the Lipschitz continuity of $G_k$ between $\vect{x}_k$ and $\vect{x}_{k+1/2}$ gives:
\[
\|\vect{v}_1 - \vect{v}_2\|\le(\eta L + \delta_{k})\|\vect{v}_1\|,
\]
and then, by the reverse-triangle inequality,
\[
\|\vect{v}_2\|\ge\|\vect{v}_1\| - \|\vect{v}_1 - \vect{v}_2\|
\ge\bigl(1 - (\eta L+\delta_k)\bigr)\|\vect{v}_1\|
\ge\tfrac34\|\vect{v}_1\|,
\]
which in turn implies the lower bound \(\|\vect{v}_2\|^2\ge\tfrac{9}{16}\|\vect{v}_1\|^2\). We weaken this to \(\|\vect{v}_2\|^2\ge\tfrac12\|\vect{v}_1\|^2\). Since \((\eta L+\delta_k)^2-1\le0\), the first term in the preceding upper bound is non-positive and can be dropped. Substituting the lower bound on \(\norm{\vect{v}_2}\) back into our main inequality, we get:
\[
\|\vect{v}_3\|^2 - \|\vect{v}_1\|^2
\;\le\;-2\,\delta_{k}\;\tfrac12\,\|\vect{v}_1\|^2
\;=\;-\,\delta_k \|\vect{v}_1\|^2,
\]
which gives a direct contraction for the norm:
\[
\|\vect{v}_3\|\le\sqrt{1 - \delta_{k}}\,\|\vect{v}_1\|
\;\le\;(1 - \tfrac{\delta_{k}}{2})\,\|\vect{v}_1\|.
\]

\smallskip
\noindent\textbf{Step 2: Drift from \(G_{k-1}\) to \(G_k\).}
Define the error at step $k+1$ as the residual:
\[
E_{k+1}\;\coloneqq\;\|G_{k}(\vect{x}_{k+1})\|\;=\;\|\vect{v}_3\|.
\]
To establish a recurrence relation for $E_k$, we first relate $G_k(\vect{x}_k)$ to $G_{k-1}(\vect{x}_k)$:
\[
G_{k}(\vect{x}_k)-G_{k-1}(\vect{x}_k)
=(\delta_{k}-\delta_{k-1})(\vect{x}_k-\vect{x}_0).
\]
Combining this with the previously derived contraction on $\|\vect{v}_3\|$ yields the following master recurrence:
\begin{equation}\label{eq:monotone-norm-recurrence}
E_{k+1}
\;\le\; \parenth{1 - \tfrac{\delta_{k}}{2}}\,
\Bigl(E_k + (\delta_{k-1}-\delta_{k})\norm{\vect{x}_k-\vect{x}_0}\Bigr),
\qquad k\ge1.
\end{equation}

\smallskip
\noindent\textbf{Step 3: Scalar recurrence.}
For the base case, we examine the initialization condition:
\begin{equation*}
E_1 = \norm{G_0(\vect{x}_1)} \le \parenth{1 - \tfrac{\delta_0}{2}} \norm{G_0(\vect{x}_0)} \le \eta L \norm{\vect{x}_0 - \vect{x}^*} = \frac{R}{8}.
\end{equation*} 
Using the boundedness from Lemma~\ref{lem:traj-bound} (\(\norm{\vect{x}_k - \vect{x}_0} \le 2R\)) and the base case \(E_1 \le R/8\), we apply Lemma~\ref{lem:recurrence} with \(\delta_k = \frac{4}{k+32}\), that is, with \(\alpha=4\), \(\beta=32\), \(\gamma=1/2\), \(c=2\), and \(c_1=1/8\). Here
\[
M=\max\left\{\frac{2\cdot4}{(1/2)4-1},\frac{32}{8}\right\}R=8R.
\]
This yields the bound:
\[
  E_{k+1} \le \frac{8}{k + 32} R.
\]

\smallskip
\noindent\textbf{Step 4: Bias removal.}
Finally, using the definition of \(G_{k-1}\) and the bound on \(E_k\), we can establish the convergence rate for the original problem:
\begin{equation*}
\norm{F(\vect{x}_k)} \le \frac{E_k + 2\delta_{k-1} R}{\eta} \le \frac{128LR}{k + 31}.
\end{equation*}


\subsection{A Regularized Past-Extragradient Method}
\label{subsec:reg-peg}

The past-extragradient (PEG) update, also known as Popov's method
\cite{popov1980modification}, is the one-new-evaluation counterpart of
extragradient in the monotone-equation setting. Instead of making two fresh
queries per iteration, PEG reuses the previous operator query in the first line
and makes one new query in the second line; in unconstrained Euclidean
saddle-point problems, closely related one-sequence forms are often called
optimistic gradient or OGDA \cite{gorbunov2022last}.

Anchored Popov and OGDA variants are now part of the accelerated monotone
operator and min-max optimization landscape, including Tran-Dinh--Luo's
Halpern-type anchored Popov scheme \cite{tran2021halpern}, APV and broader
anchored/past-call frameworks \cite{boct2024extra,tran2025accelerated}, fast
OGDA dynamics \cite{bot2025fast}, and moving-anchor Popov variants
\cite{alcala2025stochastic}. From the operator-side viewpoint in
\cref{sec:framework_principle}, the variant below is distinguished by where the
anchor is evaluated: the anchor follows the PEG operator-query points rather
than being inserted as a base-iterate correction.

The standard one-new-evaluation PEG form, written with an auxiliary sequence
\((\tilde{\vect{x}}_k)\), is
\[
    \tilde{\vect{x}}_k = \vect{x}_k - \eta F(\tilde{\vect{x}}_{k-1}),
    \qquad
    \vect{x}_{k+1} = \vect{x}_k - \eta F(\tilde{\vect{x}}_k);
\]
we use the EG-started variant with
\(\tilde{\vect{x}}_{-1}=\vect{x}_0\), so that the first step coincides with an
extragradient step. Replacing \(F\) by the scaled regularized displacement
\[
    G_k(\vect{x}) \coloneqq \eta F(\vect{x})+\delta_k(\vect{x}-\vect{x}_0)
\]
gives the regularized PEG (Reg-PEG) method:
\begin{align}
    \tilde{\vect{x}}_k &= \vect{x}_k - G_k(\tilde{\vect{x}}_{k-1}), \label{eq:reg_og_1} \\
    \vect{x}_{k+1} &= \vect{x}_k - G_k(\tilde{\vect{x}}_k), \label{eq:reg_og_2}
\end{align}
or, equivalently, after expanding \(G_k\),
\[
    \tilde{\vect{x}}_k
    =\vect{x}_k-\eta F(\tilde{\vect{x}}_{k-1})
      +\delta_k(\vect{x}_0-\tilde{\vect{x}}_{k-1}),\qquad
    \vect{x}_{k+1}
    =\vect{x}_k-\eta F(\tilde{\vect{x}}_k)
      +\delta_k(\vect{x}_0-\tilde{\vect{x}}_k).
\]
The anchor terms land at the past and current PEG operator-call points,
\(\tilde{\vect{x}}_{k-1}\) and \(\tilde{\vect{x}}_k\), rather than at the base
iterate \(\vect{x}_k\). 

\paragraph{Comparison with anchored Popov variants.}
The closest algebraic comparator is Tran-Dinh--Luo's anchored Popov scheme
\cite{tran2021halpern},
\[
    y_k
    =
    \beta_k\vect{x}_0+(1-\beta_k)\vect{x}_k-\eta_kF(y_{k-1}),
    \qquad
    \vect{x}_{k+1}
    =
    \beta_k\vect{x}_0+(1-\beta_k)\vect{x}_k-\eta_kF(y_k).
\]
Both Popov lines therefore share the same Halpern/base displacement
\(\beta_k(\vect{x}_0-\vect{x}_k)\). Nearby APV/PEAG and anchored
forward-reflected or optimistic-gradient frameworks vary the past-call direction,
splitting model, or parameter schedule, but still organize anchoring through a
base affine point or framework-level anchor
\cite{boct2024extra,tran2025accelerated}. Moving-anchor Popov variants replace
the fixed anchor by an evolving one and should be read as proximity context,
with the Popov convergence theory still less settled in that line
\cite{alcala2025stochastic}.

Reg-PEG differs at the source of the anchor. It is the ordinary PEG template
applied to the regularized displacement \(G_k\), so expanding each operator call
places the anchor at the corresponding Popov query point:
\(\delta_k(\vect{x}_0-\tilde{\vect{x}}_{k-1})\) in the past-query line and
\(\delta_k(\vect{x}_0-\tilde{\vect{x}}_k)\) in the current-query line. The
distinction claimed here is this operator-call anchor placement and the
residual-tracking proof for the resulting update, not a priority claim for
anchored Popov or OGDA acceleration as a general theme.
The theorem below is the PEG counterpart of the Reg-EG
residual bound, attained at one new \(F\)-evaluation per iteration rather than
two.

\begin{theorem}[Last-Iterate Residual Bound for Reg-PEG] \label{thm:reg_og}
Let \(F:\mathcal H\to\mathcal H\) be a monotone, \(L\)-Lipschitz operator with \(L>0\) and a solution \(\vect{x}^*\) such that \(F(\vect{x}^*) = \vect{0}\). Consider the Reg-PEG method defined by \eqref{eq:reg_og_1}--\eqref{eq:reg_og_2}, initialized with main iterate \(\vect{x}_0\) and \(\tilde{\vect{x}}_{-1}=\vect{x}_0\), with parameters \(\eta = \frac{1}{8L}\) and \(\delta_k = \frac{8}{k+64}\) for \(k \ge 0\). Let \(R \coloneqq \norm{\vect{x}_0 - \vect{x}^*}\). Then, for all \(k\ge1\), the iterates satisfy:
\[
    r_F(\vect{x}_k)=\norm{F(\vect{x}_k)} \le \frac{256LR}{k+63}.
\]
\end{theorem}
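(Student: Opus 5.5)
The plan is to copy the four-step architecture of the proof of \cref{thm:reg_eg}: a trajectory bound, a fixed-operator residual contraction, a drift estimate, and then \cref{lem:recurrence} followed by bias removal. The one structural change is that PEG carries memory through the stale query \(G_k(\tilde{\vect{x}}_{k-1})\). Because of this, neither \(\norm{G_k(\vect{x}_k)}\) nor \(\norm{\vect{x}_k-\vect{x}_k^*}\) alone contracts in one step. Both the trajectory potential and the residual potential therefore get a correction term measuring the lag between the past and current query points. Throughout I keep \(\eta=1/(8L)\) and \(\delta_k\le\delta_0=1/8\). Then each \(G_k\) is \(\delta_k\)-strongly monotone and \(\bar L\)-Lipschitz with \(\bar L\le 1/4\), and the PEG step size \(1\) satisfies \(1\le 1/(4\bar L)\).

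\textbf{Step 1 (trajectory bound).} First I would prove a PEG analogue of \cref{lem:eg-linear-conv} for a fixed \(\mu\)-strongly monotone, \(\bar L\)-Lipschitz \(G\) with \(\bar L\le 1/4\). The Lyapunov function is
\[
    \Phi_t=\norm{\vect{x}_t-\vect{x}_G^*}^2+\tfrac12\norm{\vect{x}_t-\tilde{\vect{x}}_{t-1}}^2 .
\]
I would expand \(\norm{\vect{x}_{t+1}-\vect{x}_G^*}^2\) as in \cref{lem:eg-linear-conv}, with \(G(\tilde{\vect{x}}_t)\) playing the role of the lookahead query, and then control the mismatch \(G(\tilde{\vect{x}}_t)-G(\tilde{\vect{x}}_{t-1})\) by Lipschitz continuity. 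This should give \(\Phi_{t+1}\le(1-\mu/2)\Phi_t\). Here \(\tilde{\vect{x}}_{t-1}\) enters only through the last difference, which is evaluated at the current operator.

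The bound is then transferred to the moving operators \(G_k\). The shift \(\vect{x}_{k-1}^*\to\vect{x}_k^*\) is handled by \cref{lem:stability-reg}. There is also a second drift, caused by evaluating the stale point under \(G_k\) rather than \(G_{k-1}\): it equals \((\delta_k-\delta_{k-1})(\tilde{\vect{x}}_{k-1}-\vect{x}_0)\). The induction, as in \cref{lem:traj-bound}, bounds \(\sqrt{\Phi}\) by \(R\) and uses \(\delta_k=c/(k+8c)\) with \(c=8\). The extra room this \(c\) gives over the Reg-EG case (\(c\ge2\)) should absorb the second drift. The conclusion is \(\norm{\vect{x}_k-\vect{x}_0}\le 2R\) and \(\norm{\tilde{\vect{x}}_{k}-\vect{x}_0}\le 3R\), or a similar constant.

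\textbf{Step 2 (residual contraction, the main obstacle).} Write \(\vect{u}=G_k(\tilde{\vect{x}}_{k-1})\), \(\vect{v}_1=G_k(\vect{x}_k)\), \(\vect{v}_2=G_k(\tilde{\vect{x}}_k)\) and \(\vect{v}_3=G_k(\vect{x}_{k+1})\). The potential I would track is
\[
    E_{k+1}^2=\norm{G_k(\vect{x}_{k+1})}^2+\lambda\norm{G_k(\vect{x}_{k+1})-G_k(\tilde{\vect{x}}_k)}^2 ,
\]
with a small \(\lambda\). The first attempt is the same polarization as in the Reg-EG Step 1. Since \(\vect{x}_{k+1}-\vect{x}_k=-\vect{v}_2\), strong monotonicity gives \(\angles{\vect{v}_3-\vect{v}_1,\vect{v}_2}\le-\delta_k\norm{\vect{v}_2}^2\). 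The term \(\norm{\vect{v}_3-\vect{v}_2}^2\) is bounded by \(\bar L^2\norm{\tilde{\vect{x}}_k-\vect{x}_{k+1}}^2=\bar L^2\norm{\vect{v}_2-\vect{u}}^2\). This is no longer \(\norm{\vect{v}_1-\vect{v}_2}^2\); it is the lag term. It is controlled by \(\norm{\vect{v}_2-\vect{v}_1}+\norm{\vect{v}_1-\vect{u}}\), where \(\norm{\vect{v}_1-\vect{u}}\) is the previous correction term, so the weighting \(\lambda\) must be tuned so that, with \(\bar L^2\le 1/16\), the lag is absorbed. The target is \(E_{k+1}\le(1-\delta_k/4)\hat E_k\), where \(\hat E_k\) is the same potential evaluated with \(G_k\) in place of \(G_{k-1}\). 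The contraction factor is \(1-\delta_k/4\) rather than \(1-\delta_k/2\), which is why \(\delta_k\) has numerator \(8\) so that \(\gamma\alpha=2>1\) in \cref{lem:recurrence}. If a clean inequality resists, the fallback is to bound \(\norm{\vect{v}_2}\ge\tfrac12\norm{\vect{v}_1}\) by the reverse triangle inequality, using \(\norm{\vect{v}_2-\vect{v}_1}\le\bar L\norm{\vect{v}_1-\vect{u}}\). This turns the \(-2\delta_k\norm{\vect{v}_2}^2\) term into genuine decrease of the potential.

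\textbf{Steps 3--4 (drift, recurrence, bias).} Changing \(G_{k-1}\) to \(G_k\) shifts every residual by \((\delta_k-\delta_{k-1})(\cdot-\vect{x}_0)\). By Step 1 this gives \(\hat E_k\le E_k+C(\delta_{k-1}-\delta_k)R\) with \(C\) around \(5\). The resulting recurrence \(E_{k+1}\le(1-\delta_k/4)(E_k+C(\delta_{k-1}-\delta_k)R)\) is exactly \eqref{eq:master-recurrence}. For the base case, \(\tilde{\vect{x}}_{-1}=\vect{x}_0\) makes the first step an EG step, so \(E_1=O(R/8)\). \Cref{lem:recurrence} with \(\alpha=8\), \(\beta=64\) and \(\gamma=1/4\) then gives \(E_k\le 16R/(k+63)\) provided \(C\le4\); otherwise the constants shift slightly. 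Finally,
\[
    \norm{F(\vect{x}_k)}\le\bigl(E_k+2\delta_{k-1}R\bigr)/\eta\le 8L\bigl(16R+16R\bigr)/(k+63)=256LR/(k+63).
\]
The tightest part of the constant bookkeeping is keeping \(C\) and the Step 2 factor compatible with the stated \(256\).
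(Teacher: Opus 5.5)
Your outline follows the paper's proof: a Lyapunov trajectory bound giving \(\norm{\vect{x}_k-\vect{x}_0}\le2R\), a two-component residual potential contracted by \(1-\delta_k/2\) in squares, a drift step, \cref{lem:recurrence} with \(\gamma=1/4\), and bias removal. The gap is in Step 3. As you set it up, it cannot reach the stated constant, and the arithmetic there is off. With \(\alpha=8\) we have \(\gamma\alpha-1=1\), so \cref{lem:recurrence} gives \(M=R\max\{8C,64c_1\}\). Getting \(E_k\le16R/(k+63)\) therefore requires \(C\le2\), not \(C\le4\). With your \(C\approx5\) you get \(E_k\le40R/(k+63)\) and a final constant \(8(40+16)=448\), not \(256\). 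Even if you shift the correction component by its exact drift \((\delta_{k-1}-\delta_k)(\vect{x}_k-\tilde{\vect{x}}_{k-1})\) and use Minkowski in \(\mathbb{R}^2\), the available bounds still leave \(C>2\).

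The missing idea is that the correction component never drifts upward. Set \(\vect{p}=F(\vect{x}_k)-F(\tilde{\vect{x}}_{k-1})\) and \(\vect{w}=\vect{x}_k-\tilde{\vect{x}}_{k-1}\). Then
\[
\norm{G_k(\vect{x}_k)-G_k(\tilde{\vect{x}}_{k-1})}^2=\eta^2\norm{\vect{p}}^2+2\eta\delta_k\angles{\vect{p},\vect{w}}+\delta_k^2\norm{\vect{w}}^2,
\]
which is non-decreasing in \(\delta_k\) because \(\angles{\vect{p},\vect{w}}\ge0\) by monotonicity of \(F\). So passing from \(\delta_{k-1}\) to \(\delta_k\le\delta_{k-1}\) can only shrink it. Now apply \cref{lem:omd-sqrt-perturb}: \(\sqrt{\norm{\vect{a}}^2+\norm{\vect{b}}^2}\le\sqrt{\norm{\vect{c}}^2+\norm{\vect{d}}^2}+\norm{\vect{a}-\vect{c}}\) whenever \(\norm{\vect{b}}\le\norm{\vect{d}}\). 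This charges only the first component's drift, \((\delta_{k-1}-\delta_k)\norm{\vect{x}_k-\vect{x}_0}\le2(\delta_{k-1}-\delta_k)R\). You then get \(C=2\), \(M=16R\) (with \(c_1=1/8\)), and \(8L(16R+16R)/(k+63)=256LR/(k+63)\).

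There are three smaller points to fix.

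\textbf{Step 1 has no second drift.} Phase \(k\) is exactly one PEG step on the fixed \(G_k\) from the state \((\vect{x}_k,\tilde{\vect{x}}_{k-1})\). Your correction term \(\norm{\vect{x}_k-\tilde{\vect{x}}_{k-1}}\) does not depend on the operator, so only the zero shift from \cref{lem:stability-reg} enters. If you state the one-step inequality for arbitrary input states, your weight-\(\tfrac12\) Lyapunov works. The choice \(c=8\) leaves room even for the factor \(1-\delta_k/4\) on the square root.

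\textbf{In Step 2, \(\lambda\) must be bounded below, not small.} For \(\lambda=0\) the contraction is false: take \(\vect{x}_k=\vect{x}_k^*\) and a nonzero stale query \(\vect{u}\); then \(G_k(\vect{x}_{k+1})\ne\vect{0}=G_k(\vect{x}_k)\). Absorbing the lag via \(\norm{\vect{v}_2-\vect{u}}^2\le2\norm{\vect{v}_2-\vect{v}_1}^2+2\norm{\vect{v}_1-\vect{u}}^2\) at \(\bar L=1/4\) needs roughly \(\lambda\ge2/13\). The paper takes \(\lambda=2\) in \cref{lem:peg-contraction}.

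\textbf{Your fallback is wrong as stated.} Since \(\tilde{\vect{x}}_k-\vect{x}_k=-\vect{u}\), Lipschitzness gives \(\norm{\vect{v}_2-\vect{v}_1}\le\bar L\norm{\vect{u}}\), not \(\bar L\norm{\vect{v}_1-\vect{u}}\). The bound \(\norm{\vect{v}_2}\ge\tfrac12\norm{\vect{v}_1}\) then fails, for instance when \(\vect{u}=\vect{x}_k-\vect{x}_k^*\), which makes \(\vect{v}_2=\vect{0}\).
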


\paragraph{Proof outline.}
The proof, although more involved, follows the shared progress--drift--bias architecture, with a PEG residual potential adapted to the staggered operator-call points in \eqref{eq:reg_og_1}--\eqref{eq:reg_og_2}. First, for a fixed \(G_k\), a PEG step contracts a two-component regularized-residual potential. Second, a Lyapunov argument gives the trajectory bound \(\norm{\vect{x}_k-\vect{x}_0}\le2R\). Third, the change from \(G_{k-1}\) to \(G_k\) adds the drift term \((\delta_{k-1}-\delta_k)\norm{\vect{x}_k-\vect{x}_0}\). Fourth, the recurrence lemma bounds the regularized residual potential, and the identity \(G_{k-1}(\cdot)=\eta F(\cdot)+\delta_{k-1}(\cdot-\vect{x}_0)\) removes the bias to recover \(r_F(\vect{x}_k)\).

\subsubsection{Preparatory Lemmas}

\begin{lemma}[One-step Reg-PEG displacement bound]\label{lem:omd-dis-contract}
For the Reg-PEG update at phase \(k\), suppose that \(G_k\) is \(\delta_k\)-strongly monotone, at most \(1/4\)-Lipschitz, and has zero \(\vect{x}_k^*\). Then
\[
\norm{\vect{x}_{k+1} - \vect{x}_k^*}^2 \le (1 - \delta_k)\norm{\vect{x}_k - \vect{x}_k^*}^2 - (1-2\delta_k)\norm{\vect{x}_k - \tilde{\vect{x}}_k}^2 + \frac{1}{16}\norm{\tilde{\vect{x}}_k - \tilde{\vect{x}}_{k-1}}^2.
\]
\end{lemma}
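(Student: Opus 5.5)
The plan is to run the standard Popov energy identity for the fixed operator \(G\coloneqq G_k\) around its zero \(\vect{z}\coloneqq\vect{x}_k^*\). I write \(\vect{x}=\vect{x}_k\), \(\tilde{\vect{x}}=\tilde{\vect{x}}_k\), \(\tilde{\vect{x}}_-=\tilde{\vect{x}}_{k-1}\) and \(\vect{x}_+=\vect{x}_{k+1}\). The two PEG lines \eqref{eq:reg_og_1}--\eqref{eq:reg_og_2} then read
\[
\vect{x}-\tilde{\vect{x}}=G(\tilde{\vect{x}}_-),
\qquad
\vect{x}-\vect{x}_+=G(\tilde{\vect{x}}),
\qquad
\vect{x}_+-\tilde{\vect{x}}=-\bigl(G(\tilde{\vect{x}})-G(\tilde{\vect{x}}_-)\bigr).
\]
These identities are the only algorithmic input. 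Strong monotonicity, the \(1/4\)-Lipschitz bound and one Young inequality supply the rest.

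First, I expand around \(\vect{x}_+\). Writing \(\vect{x}-\vect{z}=(\vect{x}_+-\vect{z})+(\vect{x}-\vect{x}_+)\) gives
\[
\norm{\vect{x}_+-\vect{z}}^2=\norm{\vect{x}-\vect{z}}^2-2\angles{G(\tilde{\vect{x}}),\vect{x}_+-\vect{z}}-\norm{\vect{x}_+-\vect{x}}^2 .
\]
I split the inner product as
\[
\angles{G(\tilde{\vect{x}}),\tilde{\vect{x}}-\vect{z}}+\angles{G(\tilde{\vect{x}}),\vect{x}_+-\tilde{\vect{x}}}.
\]
Since \(G(\vect{z})=\vect{0}\), strong monotonicity bounds the first piece below by \(\delta_k\norm{\tilde{\vect{x}}-\vect{z}}^2\).

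Next, I expand
\[
\norm{\vect{x}-\vect{x}_+}^2=\norm{\vect{x}-\tilde{\vect{x}}}^2+\norm{\tilde{\vect{x}}-\vect{x}_+}^2+2\angles{\vect{x}-\tilde{\vect{x}},\tilde{\vect{x}}-\vect{x}_+}
\]
and substitute \(\vect{x}-\tilde{\vect{x}}=G(\tilde{\vect{x}}_-)\). The remaining cross terms then combine into
\[
-2\angles{G(\tilde{\vect{x}})-G(\tilde{\vect{x}}_-),\vect{x}_+-\tilde{\vect{x}}}=2\norm{G(\tilde{\vect{x}})-G(\tilde{\vect{x}}_-)}^2 .
\]
Together with \(-\norm{\tilde{\vect{x}}-\vect{x}_+}^2=-\norm{G(\tilde{\vect{x}})-G(\tilde{\vect{x}}_-)}^2\), the net contribution is \(+\norm{G(\tilde{\vect{x}})-G(\tilde{\vect{x}}_-)}^2\). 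By the \(1/4\)-Lipschitz bound this is at most \(\tfrac1{16}\norm{\tilde{\vect{x}}-\tilde{\vect{x}}_-}^2\). At this point
\[
\norm{\vect{x}_+-\vect{z}}^2\le\norm{\vect{x}-\vect{z}}^2-2\delta_k\norm{\tilde{\vect{x}}-\vect{z}}^2-\norm{\vect{x}-\tilde{\vect{x}}}^2+\tfrac1{16}\norm{\tilde{\vect{x}}-\tilde{\vect{x}}_-}^2 .
\]

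Finally, I convert the strong-monotonicity gain from \(\tilde{\vect{x}}\) to \(\vect{x}\). Young's inequality gives
\[
\norm{\tilde{\vect{x}}-\vect{z}}^2\ge\tfrac12\norm{\vect{x}-\vect{z}}^2-\norm{\vect{x}-\tilde{\vect{x}}}^2,
\]
so
\[
-2\delta_k\norm{\tilde{\vect{x}}-\vect{z}}^2\le-\delta_k\norm{\vect{x}-\vect{z}}^2+2\delta_k\norm{\vect{x}-\tilde{\vect{x}}}^2 .
\]
Substituting this yields exactly the coefficients \((1-\delta_k)\), \(-(1-2\delta_k)\) and \(\tfrac1{16}\) in the statement.

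The main obstacle is the cross-term bookkeeping in the second step. The signs must be tracked so that the \(\vect{x}_+-\tilde{\vect{x}}\) terms collapse to a single \(+\norm{G(\tilde{\vect{x}})-G(\tilde{\vect{x}}_-)}^2\) rather than leaving an uncontrolled inner product. I would check this collapse first by verifying the three displayed identities directly.
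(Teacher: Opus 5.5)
Your proof is correct and is essentially the paper's argument. The paper writes the three-point identity $2\angles{\vect{x}_k-\vect{x}_{k+1},\tilde{\vect{x}}_k-\vect{x}_k^*}=\norm{\vect{x}_k-\vect{x}_k^*}^2-\norm{\vect{x}_{k+1}-\vect{x}_k^*}^2-\norm{\vect{x}_k-\tilde{\vect{x}}_k}^2+\norm{\tilde{\vect{x}}_k-\vect{x}_{k+1}}^2$ in one step, and your expansion around $\vect{x}_{k+1}$, followed by the cross-term collapse to $\norm{G_k(\tilde{\vect{x}}_k)-G_k(\tilde{\vect{x}}_{k-1})}^2=\norm{\tilde{\vect{x}}_k-\vect{x}_{k+1}}^2$, simply reconstructs that identity. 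The remaining ingredients match the paper's: strong monotonicity at $\tilde{\vect{x}}_k$, the $1/4$-Lipschitz bound giving the $\tfrac1{16}$ term, and $\norm{\vect{a}+\vect{b}}^2\le2\norm{\vect{a}}^2+2\norm{\vect{b}}^2$ to transfer the gain to $\vect{x}_k$.
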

\begin{proof}
We begin by establishing a lower bound on the term $2 \angles{G_k(\tilde{\vect{x}}_k), \tilde{\vect{x}}_k - \vect{x}_k^*}$. Using the $\delta_k$-strong monotonicity of $G_k$ and the inequality $\norm{\vect{a}+\vect{b}}^2 \le 2\norm{\vect{a}}^2 + 2\norm{\vect{b}}^2$, we have:
\begin{align*}
2 \angles{G_k(\tilde{\vect{x}}_k), \tilde{\vect{x}}_k - \vect{x}_k^*} &\ge 2\delta_k\norm{\tilde{\vect{x}}_k-\vect{x}_k^*}^2 \\
&\ge \delta_k\left(\norm{\vect{x}_k-\vect{x}_k^*}^2 - 2\norm{\vect{x}_k-\tilde{\vect{x}}_k}^2\right).
\end{align*}
Next, we derive an upper bound for the same term. From the update rule, we know that $G_k(\tilde{\vect{x}}_k) = \vect{x}_k - \vect{x}_{k+1}$. Substituting this into the expression gives:
\begin{align*}
2 \angles{G_k(\tilde{\vect{x}}_k), \tilde{\vect{x}}_k - \vect{x}_k^*} &= 2\angles{\vect{x}_k-\vect{x}_{k+1}, \tilde{\vect{x}}_k-\vect{x}_k^*} \\
&= \norm{\vect{x}_k - \vect{x}_k^*}^2 - \norm{\vect{x}_{k+1} - \vect{x}_k^*}^2 - \norm{\vect{x}_k - \tilde{\vect{x}}_k}^2 + \norm{\tilde{\vect{x}}_k - \vect{x}_{k+1}}^2.
\end{align*}
The final term in this expression can be bounded using the Lipschitz property of \(G_k\):
\[
\norm{\tilde{\vect{x}}_k - \vect{x}_{k+1}}^2 = \norm{G_k(\tilde{\vect{x}}_k) - G_k(\tilde{\vect{x}}_{k-1})}^2 \le \frac{1}{16} \norm{\tilde{\vect{x}}_k - \tilde{\vect{x}}_{k-1}}^2.
\]
Combining the lower and upper bounds and rearranging the terms yields the result.
\end{proof}

\begin{lemma}[Boundedness of the Reg-PEG trajectory]\label{lem:reg-og-traj-bound}
Let \(F:\mathcal H\to\mathcal H\) be monotone and \(L\)-Lipschitz with \(L>0\), and let \(\vect{x}^*\) satisfy \(F(\vect{x}^*)=\vect{0}\). Set \(R\coloneqq\norm{\vect{x}_0-\vect{x}^*}\). For the Reg-PEG scheme \eqref{eq:reg_og_1}--\eqref{eq:reg_og_2} with \(\eta=\frac{1}{8L}\), \(\delta_k=\frac{8}{k+64}\), and \(\tilde{\vect{x}}_{-1}=\vect{x}_0\), the iterates satisfy
\[
    \norm{\vect{x}_k-\vect{x}_0}\le 2R,\qquad k\ge0.
\]
\end{lemma}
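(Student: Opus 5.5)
The argument mirrors \cref{lem:traj-bound}. There is one change: the Reg-PEG step is not a plain contraction toward \(\vect{x}_k^*\). \cref{lem:omd-dis-contract} leaves a positive term \(\tfrac1{16}\norm{\tilde{\vect{x}}_k-\tilde{\vect{x}}_{k-1}}^2\), so I would track a Lyapunov potential
\[
    \Phi_k\coloneqq\norm{\vect{x}_k-\vect{x}_{k-1}^*}^2+\lambda\norm{\vect{x}_k-\tilde{\vect{x}}_{k-1}}^2
\]
for a small constant \(\lambda\) (say \(\lambda=1/4\)). I would then prove \(\Phi_k\le R^2\) by induction. This gives \(\norm{\vect{x}_k-\vect{x}_{k-1}^*}\le R\), and \cref{lem:reg-and-x0} together with the triangle inequality then yields \(\norm{\vect{x}_k-\vect{x}_0}\le2R\), exactly as at the end of \cref{lem:traj-bound}. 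The parameter facts needed throughout are these: \(G_k\) is \(\delta_k\)-strongly monotone and \((\eta L+\delta_k)\le 1/8+1/8=1/4\)-Lipschitz, so \cref{lem:omd-dis-contract} applies; and \((\delta_{k-1}-\delta_k)/\delta_{k-1}=1/(k+64)\le\delta_k/8\).

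\textbf{Step 1: absorbing the positive term.} Write \(\tilde{\vect{x}}_k-\tilde{\vect{x}}_{k-1}=(\tilde{\vect{x}}_k-\vect{x}_k)+(\vect{x}_k-\tilde{\vect{x}}_{k-1})\), so that
\[
    \norm{\tilde{\vect{x}}_k-\tilde{\vect{x}}_{k-1}}^2\le 2\norm{\vect{x}_k-\tilde{\vect{x}}_k}^2+2\norm{\vect{x}_k-\tilde{\vect{x}}_{k-1}}^2 .
\]
The first piece is absorbed by the \(-(1-2\delta_k)\norm{\vect{x}_k-\tilde{\vect{x}}_k}^2\) term in \cref{lem:omd-dis-contract}. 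For the new potential component I would use
\[
    \vect{x}_{k+1}-\tilde{\vect{x}}_k=G_k(\tilde{\vect{x}}_{k-1})-G_k(\tilde{\vect{x}}_k),
\]
which gives \(\norm{\vect{x}_{k+1}-\tilde{\vect{x}}_k}^2\le\tfrac1{16}\norm{\tilde{\vect{x}}_k-\tilde{\vect{x}}_{k-1}}^2\). This again splits into the two pieces above. Collecting terms, I aim for an inequality of the form
\[
    \norm{\vect{x}_{k+1}-\vect{x}_k^*}^2+\lambda\norm{\vect{x}_{k+1}-\tilde{\vect{x}}_k}^2
    \le(1-\delta_k)\norm{\vect{x}_k-\vect{x}_k^*}^2+\Bigl(\tfrac18+\tfrac{\lambda}{8}\Bigr)\norm{\vect{x}_k-\tilde{\vect{x}}_{k-1}}^2 .
\]
The coefficient \(1-2\delta_k-\tfrac18-\tfrac{\lambda}{8}\ge0\) on \(\norm{\vect{x}_k-\tilde{\vect{x}}_k}^2\) is dropped. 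Choosing \(\lambda\) so that \(\tfrac18(1+\lambda)\le(1-\delta_k/2)\lambda\) gives a per-step contraction \(\Psi_{k+1}\le(1-\delta_k/2)\Psi_k\) of the fixed-center potential. Here \(\Psi_k\) denotes \(\Phi\) with center \(\vect{x}_k^*\); for instance \(\lambda=1/4\) works because \(5/32\le 1/4\cdot 15/16\).

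\textbf{Step 2: moving the center.} Pass from \(\vect{x}_k^*\) to \(\vect{x}_{k-1}^*\) using \cref{lem:stability-reg}: with \(q=1/(k+64)\),
\[
    \norm{\vect{x}_k-\vect{x}_k^*}\le\norm{\vect{x}_k-\vect{x}_{k-1}^*}+qR .
\]
Under the inductive hypothesis \(\norm{\vect{x}_k-\vect{x}_{k-1}^*}\le R\), squaring gives
\[
    \norm{\vect{x}_k-\vect{x}_k^*}^2\le\norm{\vect{x}_k-\vect{x}_{k-1}^*}^2+(2q+q^2)R^2 .
\]
So \(\Phi_{k+1}\le(1-\delta_k/2)\bigl(\Phi_k+3qR^2\bigr)\), and \(\Phi_{k+1}\le R^2\) follows as long as \(3q\le\delta_k/2\), i.e.\ \(3/(k+64)\le4/(k+64)\), which holds. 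This is where the schedule \(\delta_k=8/(k+64)\), rather than the Reg-EG choice, matters: the squared potential doubles the drift constant.

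\textbf{Step 3: base case.} Since \(\tilde{\vect{x}}_{-1}=\vect{x}_0\), the first step is an EG step on \(G_0\). \cref{lem:eg-linear-conv} then gives \(\norm{\vect{x}_1-\vect{x}_0^*}\le\norm{\vect{x}_0-\vect{x}_0^*}\le R\). For the second component,
\[
    \norm{\vect{x}_1-\tilde{\vect{x}}_0}\le\tfrac14\norm{\tilde{\vect{x}}_0-\vect{x}_0}=\tfrac14\norm{G_0(\vect{x}_0)}\le\tfrac14\cdot\tfrac14\norm{\vect{x}_0-\vect{x}_0^*}.
\]
To make \(\Phi_1\le R^2\) hold cleanly, I would first apply the \(1-\delta_0/2\) contraction to \(\norm{\vect{x}_1-\vect{x}_0^*}^2\), which leaves room for this small term.

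\textbf{Main obstacle.} I expect the main difficulty to be the constant bookkeeping: closing the induction on a squared quantity while both the cross-term \(\tfrac1{16}\norm{\tilde{\vect{x}}_k-\tilde{\vect{x}}_{k-1}}^2\) and the center drift are present. If the margins fail, the fallback is to run the induction on the hypothesis \(\Phi_k\le R^2\) directly, bounding
\[
    \norm{\vect{x}_k-\vect{x}_k^*}^2\le(1+\delta_k/4)\norm{\vect{x}_k-\vect{x}_{k-1}^*}^2+(1+4/\delta_k)q^2R^2 ,
\]
and using \(q^2/\delta_k=O(1/k)\) small relative to \(\delta_k\).
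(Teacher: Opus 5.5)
Your proof is correct. It follows the same overall architecture as the paper: a two-component Lyapunov function built from \cref{lem:omd-dis-contract} plus the Lipschitz bound on the staggered step, a moving-center correction from \cref{lem:stability-reg}, and an induction closing at radius \(R\). It differs in two technical choices.

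\textbf{Second component.} The paper uses \(\tfrac1{16}\norm{\tilde{\vect{x}}_{k-1}-\tilde{\vect{x}}_{k-2}}^2\) and gets the factor \(1-\delta_k\) on the squared potential. Your \(\norm{\vect{x}_k-\tilde{\vect{x}}_{k-1}}\) is at most \(\tfrac14\norm{\tilde{\vect{x}}_{k-1}-\tilde{\vect{x}}_{k-2}}\) by Lipschitzness of \(G_{k-1}\), so this difference is essentially cosmetic.

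\textbf{Drift handling.} This is the substantive difference. The paper takes square roots before moving the center. It then uses \(\sqrt{\norm{\vect{a}+\vect{b}}^2+\norm{\vect{c}}^2}\le\sqrt{\norm{\vect{a}}^2+\norm{\vect{c}}^2}+\norm{\vect{b}}\) to get the linear recurrence \(B_{k+1}\le(1-\delta_k/2)(B_k+qR)\). This has the same form as the Reg-EG recurrence in \cref{lem:traj-bound} and closes whenever \(q\le\delta_k/2\), i.e.\ for \(\delta_k=c/(k+8c)\) with any \(c\ge2\). You instead stay with squares and linearize the cross term \(2qR\norm{\vect{x}_k-\vect{x}_{k-1}^*}\) via the inductive hypothesis. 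This avoids the root-domain inequality but spends more margin: you need \(3q\le\delta_k/2\), i.e.\ \(c\ge6\), which \(c=8\) satisfies. Your Step~1 in fact already yields the factor \(1-\delta_k\) with \(\lambda=1/4\), since \(\tfrac{5}{32}\le\tfrac14(1-\delta_k)\) for \(\delta_k\le 3/8\); using it would relax your requirement to \(c\ge3\).

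Consequently, your remark that the trajectory bound is what forces \(\delta_k=8/(k+64)\) is an artifact of your route. In the paper, \(\alpha=8\) is needed instead to get \(\gamma\alpha>1\) in \cref{lem:recurrence} with \(\gamma=1/4\).

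Your base case is fine: it gives \(\Phi_1\le(\tfrac{225}{256}+\tfrac{1}{1024})R^2\). The final triangle-inequality step is also fine. The fallback in your last paragraph is not needed.
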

\begin{proof}
Let \(\vect{x}_k^*\) denote the unique zero of \(G_k\). We set the step size to $\eta = \frac{1}{8L}$ and use $\delta_k \le \delta_0 = \frac{1}{8}$. This choice ensures that $G_k$ is at most $1/4$-Lipschitz for any $k \ge 0$. For \(k\ge1\), our immediate goal is to prove the following inequality, which indicates that a specific Lyapunov function is contracting:
$$
\norm{\vect{x}_{k+1} - \vect{x}_k^*}^2 + \frac{1}{16} \norm{\tilde{\vect{x}}_k - \tilde{\vect{x}}_{k-1}}^2 \le \left(1 - \delta_k\right) \left( \norm{\vect{x}_{k} - \vect{x}_k^*}^2 + \frac{1}{16} \norm{\tilde{\vect{x}}_{k-1} - \tilde{\vect{x}}_{k-2}}^2 \right).
$$
We first apply \cref{lem:omd-dis-contract}, and then relate $\norm{\vect{x}_k - \tilde{\vect{x}}_k}^2$ to quantities from the previous iteration. By the triangle inequality and the update rule:
\begin{align*}
\norm{\tilde{\vect{x}}_k - \tilde{\vect{x}}_{k-1}}^2 &\le 2\norm{\vect{x}_k - \tilde{\vect{x}}_k}^2 + 2\norm{\vect{x}_k - \tilde{\vect{x}}_{k-1}}^2 \\
&\le 2\norm{\vect{x}_k - \tilde{\vect{x}}_k}^2 + 2\norm{G_{k-1}(\tilde{\vect{x}}_{k-2}) - G_{k-1}(\tilde{\vect{x}}_{k-1})}^2 \\
&\le 2\norm{\vect{x}_k - \tilde{\vect{x}}_k}^2 + \frac{1}{8} \norm{\tilde{\vect{x}}_{k-1} - \tilde{\vect{x}}_{k-2}}^2.
\end{align*}
Rearranging this gives a lower bound on $\norm{\vect{x}_k - \tilde{\vect{x}}_k}^2$, which we substitute into the result of \cref{lem:omd-dis-contract}:
\begin{align*}
\norm{\vect{x}_{k+1} - \vect{x}_k^*}^2 \le{}& \left(1-\delta_k\right)\norm{\vect{x}_k - \vect{x}_k^*}^2 + \frac{1}{16}\norm{\tilde{\vect{x}}_k - \tilde{\vect{x}}_{k-1}}^2 \\
& - \left(1-2\delta_k\right)\left(\frac{1}{2}\norm{\tilde{\vect{x}}_k - \tilde{\vect{x}}_{k-1}}^2 - \frac{1}{16}\norm{\tilde{\vect{x}}_{k-1} - \tilde{\vect{x}}_{k-2}}^2\right) \\
=& \left(1-\delta_k\right)\norm{\vect{x}_k - \vect{x}_k^*}^2 + \left[\frac{1}{16} - \frac{1}{2}\left(1- 2 \delta_k\right)\right]\norm{\tilde{\vect{x}}_k - \tilde{\vect{x}}_{k-1}}^2 \\
& + \frac{1}{16}\left(1-2\delta_k\right)\norm{\tilde{\vect{x}}_{k-1} - \tilde{\vect{x}}_{k-2}}^2.
\end{align*}
After rearranging, we get:
\[
\begin{aligned}
&\norm{\vect{x}_{k+1} - \vect{x}_k^*}^2
+ \parenth{\frac{7}{16} - \delta_k}
    \norm{\tilde{\vect{x}}_k - \tilde{\vect{x}}_{k-1}}^2 \\
&\qquad\le
\left(1-\delta_k\right)\norm{\vect{x}_k - \vect{x}_k^*}^2
+ \frac{1}{16}\left(1-2\delta_k\right)
    \norm{\tilde{\vect{x}}_{k-1} - \tilde{\vect{x}}_{k-2}}^2.
\end{aligned}
\]
Given our choice of $\delta_k \le 1/8$, we have \((7/16-\delta_k)\ge1/16\) and \((1-2\delta_k)/16\le(1-\delta_k)/16\). Therefore the previous display implies
\[
\norm{\vect{x}_{k+1} - \vect{x}_k^*}^2
+\frac{1}{16}\norm{\tilde{\vect{x}}_k - \tilde{\vect{x}}_{k-1}}^2
\le
(1-\delta_k)\left(
\norm{\vect{x}_{k} - \vect{x}_k^*}^2
+\frac{1}{16}\norm{\tilde{\vect{x}}_{k-1} - \tilde{\vect{x}}_{k-2}}^2
\right).
\]
This allows us to define the Lyapunov function. For \(j\ge1\), set
\[
B_j\coloneqq \sqrt{\norm{\vect{x}_{j} - \vect{x}_{j-1}^*}^2 + \frac{1}{16} \norm{\tilde{\vect{x}}_{j-1} - \tilde{\vect{x}}_{j-2}}^2}.
\]
Then for \(k\ge1\),
\begin{align*}
B_{k+1}
&\le \sqrt{\left(1 - \delta_k\right) \left( \norm{\vect{x}_{k} - \vect{x}_k^*}^2 + \frac{1}{16} \norm{\tilde{\vect{x}}_{k-1} - \tilde{\vect{x}}_{k-2}}^2 \right)} \\
&\le \parenth{1 - \frac{\delta_k}{2}} \sqrt{ \norm{\vect{x}_{k} - \vect{x}_k^*}^2 + \frac{1}{16} \norm{\tilde{\vect{x}}_{k-1} - \tilde{\vect{x}}_{k-2}}^2} \\
&\le \parenth{1 - \frac{\delta_k}{2}} \parenth{\sqrt{ \norm{\vect{x}_{k} - \vect{x}_{k-1}^*}^2 + \frac{1}{16} \norm{\tilde{\vect{x}}_{k-1} - \tilde{\vect{x}}_{k-2}}^2} + \norm{\vect{x}_{k-1}^* - \vect{x}_k^*}}.
\end{align*}
Here, the second inequality uses $\sqrt{1 - x} \le 1 - x/2$ for $x \in [0,1]$, and the third inequality uses the triangle inequality property $\sqrt{\norm{\vect{a} + \vect{b}}^2 + \norm{\vect{c}}^2} \le \sqrt{\norm{\vect{a}}^2 + \norm{\vect{c}}^2} + \norm{\vect{b}}$. Applying the bound on the stability of the regularized solution from \cref{lem:stability-reg}, we have:
\begin{equation}
    B_{k+1} \le \parenth{1 - \frac{\delta_k}{2}} \parenth{B_k + \frac{\delta_{k-1} - \delta_k}{\delta_{k-1}} R}.
\end{equation}
We now use induction to show that the iterates are bounded, i.e., $B_k \le R$. For the base case $k = 1$, we use the \(k=0\) instance of \cref{lem:omd-dis-contract}, add the term \(\frac1{16}\norm{\tilde{\vect{x}}_0-\tilde{\vect{x}}_{-1}}^2\), and use \(\tilde{\vect{x}}_{-1}=\vect{x}_0\). Together with \cref{lem:reg-and-x0}, this gives
\begin{align*}
    B_1^2
    &= \norm{\vect{x}_{1} - \vect{x}_0^*}^2 + \frac{1}{16} \norm{\tilde{\vect{x}}_0 - \tilde{\vect{x}}_{-1}}^2  \\
    &\le (1 - \delta_0)\norm{\vect{x}_0 - \vect{x}_0^*}^2 - (1 - 2\delta_0) \norm{\vect{x}_0 - \tilde{\vect{x}}_0}^2 + \frac{1}{8}\norm{\vect{x}_0 - \tilde{\vect{x}}_0}^2  \\
    &\le \norm{\vect{x}_0 - \vect{x}_0^*}^2 \le R^2.
\end{align*}
Assuming $B_k \le R$ holds, and writing $\delta_k = \frac{c}{k + 8c}$ with \(c=8\), we use
\[
\frac{\delta_{k-1}-\delta_k}{\delta_{k-1}}=\frac{1}{k+8c},
\qquad
\frac{\delta_k}{2}=\frac{c}{2(k+8c)}\ge\frac{1}{k+8c},
\]
where the last inequality follows from the weaker bound \(c\ge2\). Therefore
\begin{align*}
    B_{k+1} \le \parenth{1 - \frac{1}{k + 8c}} \parenth{R + \frac{1}{k + 8c} R} \le R.
\end{align*}
This completes the induction. The case \(k=0\) is immediate because \(\norm{\vect{x}_0-\vect{x}_0}=0\). For \(k\ge0\), the induction bound at index \(k+1\) implies that the trajectory is bounded:
\begin{equation}
    \norm{\vect{x}_{k+1} - \vect{x}_0} \le \norm{\vect{x}_{k+1} - \vect{x}_k^*} + \norm{\vect{x}_k^* - \vect{x}_0} \le R+R = 2R.
\end{equation}
\end{proof}

\begin{lemma}[PEG Residual-Potential Contraction]\label{lem:peg-contraction}
Let $\vect{a},\vect{b},\vect{c},\vect{d}\in\mathcal H$ and suppose $0 < \delta \le \frac{1}{3}$. We assume the following two conditions hold:
\begin{align}
\langle \vect{b}-\vect{a}, \vect{c}\rangle &\ge \delta \norm{\vect{c}}^{2}, && \text{(strong monotonicity)}\label{eq:monotone}\\
\norm{\vect{a}-\vect{c}} &\le \frac{1}{3}\norm{\vect{c}-\vect{d}}, && \text{(Lipschitzness)}\label{eq:lipschitz}
\end{align}
Define a potential function
$\Phi(\vect{x}, \vect{y}) := \norm{\vect{x}}^{2} + 2\norm{\vect{x}-\vect{y}}^{2}$. Then
\begin{equation}
\Phi(\vect{a}, \vect{c}) \le \bigl(1 - \tfrac{\delta}{2}\bigr) \Phi(\vect{b}, \vect{d}).\label{eq:goal}
\end{equation}
\end{lemma}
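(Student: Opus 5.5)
The plan is a direct expansion of \(\Phi(\vect{a},\vect{c})\) around \(\vect{b}\), followed by a scalar quadratic-form check. Throughout I write
\[
u\coloneqq\norm{\vect{b}-\vect{a}},\qquad w\coloneqq\norm{\vect{b}-\vect{d}},\qquad e\coloneqq\norm{\vect{a}-\vect{c}},
\]
so that the goal \eqref{eq:goal} reads \(\norm{\vect{a}}^2+2e^2\le(1-\delta/2)(\norm{\vect{b}}^2+2w^2)\). The structure mirrors Step 1 of the Reg-EG proof: the strong-monotonicity inner product \eqref{eq:monotone} produces a negative multiple of \(\norm{\vect{c}}^2\), and the Lipschitz condition \eqref{eq:lipschitz} controls the cross terms.

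\textbf{Step 1 (progress identity).} Expand \(\norm{\vect{b}}^2=\norm{\vect{a}}^2+2\angles{\vect{a},\vect{b}-\vect{a}}+u^2\), and split \(\vect{a}=\vect{c}+(\vect{a}-\vect{c})\) inside the inner product. By \eqref{eq:monotone} and Cauchy--Schwarz this gives
\[
\norm{\vect{a}}^2\le\norm{\vect{b}}^2-2\delta\norm{\vect{c}}^2-u^2+2ue .
\]

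\textbf{Step 2 (Lipschitz transfer).} Use the triangle decomposition \(\vect{c}-\vect{d}=(\vect{c}-\vect{a})+(\vect{a}-\vect{b})+(\vect{b}-\vect{d})\). Combined with \eqref{eq:lipschitz}, it gives \(\tfrac23\norm{\vect{c}-\vect{d}}\le u+w\), and hence
\[
e\le\tfrac13\norm{\vect{c}-\vect{d}}\le\tfrac12(u+w).
\]
The factor \(1/3\) is what makes this constant \(1/2\) rather than something larger.

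\textbf{Step 3 (close the quadratic form).} Add \(2e^2\) to the bound from Step 1. It then suffices to show
\[
-u^2+2ue+2e^2\le(2-\delta)w^2+\tfrac{\delta}{2}\norm{\vect{b}}^2+2\delta\norm{\vect{c}}^2 .
\]
The contraction slack \(\tfrac{\delta}{2}\norm{\vect{b}}^2\) must be paid for by the \(2\delta\norm{\vect{c}}^2\) gain. To do this I will lower-bound \(\norm{\vect{c}}\) in terms of \(\norm{\vect{b}}\) via \(\norm{\vect{b}}\le\norm{\vect{c}}+e+u\), and then absorb the resulting error terms using \(\delta\le1/3\).

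\textbf{Main obstacle.} Step 3 is where I expect the difficulty. Substituting the crude bound \(e\le(u+w)/2\) leaves a positive \(u^2/2+2uw\) contribution, which \(2w^2\) alone cannot dominate when \(u\gg w\). So I will not bound \(2ue\) and \(2e^2\) separately. Instead I would first try to keep \(e\) and \(u\) coupled, treating \(-u^2+2ue+2e^2\) jointly with the constraint from Step 2, i.e.\ viewing it as a form in \((u,w)\) after eliminating \(e\), and check negativity of the resulting \(2\times2\) matrix.

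If that check fails, the fallback is to revisit Step 1 without Cauchy--Schwarz. Since \(\angles{\vect{b}-\vect{a},\vect{c}}\ge\delta\norm{\vect{c}}^2\ge0\), the vector \(\vect{b}-\vect{a}\) is aligned with \(\vect{c}\). I would exploit this by decomposing \(\vect{b}-\vect{a}\) along \(\vect{c}\), so that the large-\(u\) regime is paid for by \(\norm{\vect{c}}\). Failing that, I would reweight the Young's-inequality splits so that the weight \(2\) in \(\Phi\) exactly absorbs the leftover cross term.
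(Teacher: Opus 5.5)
There is a genuine gap, and it is the one you flag in Step 3. It cannot be closed by tuning, because the scalar inequalities you keep after Steps 1 and 2 do not imply the goal.

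The Cauchy--Schwarz bound in Step 1 is tight only when $\vect{a}-\vect{c}$ and $\vect{b}-\vect{a}$ are anti-parallel. The three-term bound $\norm{\vect{c}-\vect{d}}\le e+u+w$ in Step 2 is tight only when they are parallel. You are therefore charging two incompatible worst cases.

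Concretely, take $w=0$, $u=2$, $e=1$, $\norm{\vect{c}}=0$, $\norm{\vect{a}}=\norm{\vect{b}}=1$. This satisfies your Step 1 inequality (with equality), $e\le(u+w)/2$, and $\norm{\vect{b}}\le\norm{\vect{c}}+e+u$. Yet $\norm{\vect{a}}^2+2e^2=3>(1-\delta/2)\norm{\vect{b}}^2$ for every $\delta$. The configuration is realized by $\vect{c}=\vect{0}$, $\vect{d}=\vect{b}=-\vect{a}$. That violates the true Lipschitz hypothesis (here $\norm{\vect{c}-\vect{d}}=1$, not $3$), but not your relaxed version of it.

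Consequences for your plan:
\begin{itemize}
\item The $2\times2$ check fails: after substituting $e=(u+w)/2$, the $u^2$ coefficient is $+\tfrac12$.
\item No lower bound on $\norm{\vect{c}}$ helps, since the counterexample has $\norm{\vect{c}}=0$.
\item Decomposing $\vect{b}-\vect{a}$ along $\vect{c}$ targets the wrong angle. What was lost is the angle between $\vect{b}-\vect{a}$ and $\vect{a}-\vect{c}$, and strong monotonicity says nothing about it.
\end{itemize}
Separately, in your Step 3 display the $\tfrac{\delta}{2}\norm{\vect{b}}^2$ term should carry a minus sign.

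The missing idea is to keep the cross term exact:
\[
\norm{\vect{b}-\vect{a}}^2+2\angles{\vect{a}-\vect{c},\vect{b}-\vect{a}}=\norm{\vect{b}-\vect{c}}^2-\norm{\vect{a}-\vect{c}}^2 .
\]
Your Step 1 then becomes
\[
\Phi(\vect{b},\vect{d})-\Phi(\vect{a},\vect{c})\ge\norm{\vect{b}-\vect{c}}^2+2\norm{\vect{b}-\vect{d}}^2-3\norm{\vect{a}-\vect{c}}^2+2\delta\norm{\vect{c}}^2 ,
\]
so the right variables are $\norm{\vect{b}-\vect{c}}$ and $\norm{\vect{b}-\vect{d}}$, not $u$.

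Route the Lipschitz transfer through $\vect{b}$ alone: $3\norm{\vect{a}-\vect{c}}^2\le\tfrac13\norm{\vect{c}-\vect{d}}^2\le\tfrac23\bigl(\norm{\vect{b}-\vect{c}}^2+\norm{\vect{b}-\vect{d}}^2\bigr)$.

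Then pay for the contraction slack with $\tfrac{\delta}{2}\Phi(\vect{b},\vect{d})\le\delta\bigl(\norm{\vect{b}-\vect{c}}^2+\norm{\vect{c}}^2+\norm{\vect{b}-\vect{d}}^2\bigr)$. This leaves $(\tfrac13-\delta)\norm{\vect{b}-\vect{c}}^2+(\tfrac43-\delta)\norm{\vect{b}-\vect{d}}^2+\delta\norm{\vect{c}}^2\ge0$, which is where $\delta\le\tfrac13$ is used.

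This is the paper's argument. The retained $\norm{\vect{b}-\vect{c}}^2$ term absorbs both the Lipschitz error and the $\delta$-cost, and it is exactly what your Cauchy--Schwarz step discards.
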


\begin{proof}
The proof proceeds by establishing a lower bound on the difference $\Phi(\vect{b}, \vect{d}) - \Phi(\vect{a}, \vect{c})$ and showing that this is greater than or equal to $\frac{\delta}{2}\Phi(\vect{b}, \vect{d})$.

Expand the difference between the potentials:
\begin{align*}
\Phi(\vect{b}, \vect{d}) - \Phi(\vect{a}, \vect{c}) &= \bigl(\norm{\vect{b}}^{2}-\norm{\vect{a}}^{2}\bigr) + 2\bigl(\norm{\vect{b}-\vect{d}}^{2}-\norm{\vect{a}-\vect{c}}^{2}\bigr).
\end{align*}
We first find a lower bound for the term $\norm{\vect{b}}^{2}-\norm{\vect{a}}^{2}$. By decomposing $\vect{a}$ as $(\vect{a}-\vect{c})+\vect{c}$ and invoking the strong monotonicity condition from \cref{eq:monotone}, we obtain:
\begin{align*}
\norm{\vect{b}}^{2}-\norm{\vect{a}}^{2} &= \norm{\vect{b}-\vect{a}}^{2}+2\langle \vect{a}, \vect{b}-\vect{a}\rangle \\
&= \norm{\vect{b}-\vect{a}}^{2}+2\langle \vect{a}-\vect{c}, \vect{b}-\vect{a}\rangle + 2\langle \vect{c}, \vect{b}-\vect{a}\rangle \\
&\ge \norm{\vect{b}-\vect{a}}^{2}+2\langle \vect{a}-\vect{c}, \vect{b}-\vect{a}\rangle + 2\delta \norm{\vect{c}}^{2}.
\end{align*}
Using the identity $\norm{\vect{x}}^2 + 2\langle \vect{y}, \vect{x} \rangle = \norm{\vect{x}+\vect{y}}^2 - \norm{\vect{y}}^2$ with $\vect{x} = \vect{b}-\vect{a}$ and $\vect{y} = \vect{a}-\vect{c}$, the first two terms simplify to $\norm{\vect{b}-\vect{c}}^{2}-\norm{\vect{a}-\vect{c}}^{2}$. This gives:
\[
\norm{\vect{b}}^{2}-\norm{\vect{a}}^{2} \ge \norm{\vect{b}-\vect{c}}^{2}-\norm{\vect{a}-\vect{c}}^{2}+2\delta \norm{\vect{c}}^{2}.
\]
Substituting this result back into the expression for the potential difference yields:
\begin{equation}\label{eq:gf-first}
\Phi(\vect{b}, \vect{d}) - \Phi(\vect{a}, \vect{c}) \ge \norm{\vect{b}-\vect{c}}^{2} + 2\norm{\vect{b}-\vect{d}}^{2} - 3\norm{\vect{a}-\vect{c}}^{2} + 2\delta \norm{\vect{c}}^{2}.
\end{equation}
Next, we apply the Lipschitz bound from \cref{eq:lipschitz} to handle the negative term involving $\vect{a}-\vect{c}$:
\begin{equation}\label{eq:gf-second}
\Phi(\vect{b}, \vect{d}) - \Phi(\vect{a}, \vect{c}) \ge \norm{\vect{b}-\vect{c}}^{2} + 2\norm{\vect{b}-\vect{d}}^{2} - \frac{1}{3}\norm{\vect{d}-\vect{c}}^{2} + 2\delta \norm{\vect{c}}^{2}.
\end{equation}

Our next step is to relate this lower bound to $\Phi(\vect{b}, \vect{d})$. We establish an upper bound for $\frac{\delta}{2} \Phi(\vect{b}, \vect{d})$ by applying the triangle inequality in the form $\norm{\vect{x}+\vect{y}}^2 \le 2(\norm{\vect{x}}^2 + \norm{\vect{y}}^2)$, which gives $\norm{\vect{b}}^2 = \norm{(\vect{b}-\vect{c})+\vect{c}}^2 \le 2(\norm{\vect{b}-\vect{c}}^2 + \norm{\vect{c}}^2)$. Therefore,
\begin{equation}\label{eq:half-g}
\frac{\delta}{2} \Phi(\vect{b}, \vect{d}) = \frac{\delta}{2}\bigl(\norm{\vect{b}}^{2}+2\norm{\vect{b}-\vect{d}}^{2}\bigr) \le \delta\bigl(\norm{\vect{b}-\vect{c}}^{2}+\norm{\vect{c}}^{2}+\norm{\vect{b}-\vect{d}}^{2}\bigr).
\end{equation}

Finally, we combine our results. Subtracting the bound in \cref{eq:half-g} from the inequality in \cref{eq:gf-second} yields:
\begin{align*}
\Phi(\vect{b}, \vect{d}) - \Phi(\vect{a}, \vect{c}) - \tfrac{\delta}{2} \Phi(\vect{b}, \vect{d}) &\ge (1-\delta)\norm{\vect{b}-\vect{c}}^{2} + (2-\delta)\norm{\vect{b}-\vect{d}}^{2} - \frac{1}{3}\norm{\vect{d}-\vect{c}}^{2} + \delta \norm{\vect{c}}^{2}.
\end{align*}
To simplify the right-hand side, we again use the triangle inequality:
\[
\norm{\vect{d}-\vect{c}}^{2}
=
\norm{(\vect{d}-\vect{b})+(\vect{b}-\vect{c})}^2
\le
2\norm{\vect{b}-\vect{d}}^{2}
+2\norm{\vect{b}-\vect{c}}^{2}.
\]
This leads to:
\begin{align*}
\bigl(1 - \tfrac{\delta}{2}\bigr)\Phi(\vect{b}, \vect{d}) - \Phi(\vect{a}, \vect{c}) &\ge \bigl(1-\delta - \tfrac{2}{3}\bigr)\norm{\vect{b}-\vect{c}}^{2} + \bigl(2-\delta - \tfrac{2}{3}\bigr)\norm{\vect{b}-\vect{d}}^{2}+\delta \norm{\vect{c}}^{2} \\
&= \bigl(\tfrac{1}{3}-\delta\bigr)\norm{\vect{b}-\vect{c}}^{2} + \bigl(\tfrac{4}{3}-\delta\bigr)\norm{\vect{b}-\vect{d}}^{2}+\delta \norm{\vect{c}}^{2}.
\end{align*}
Since we assumed $0 < \delta \le \frac{1}{3}$, all the coefficients on the right-hand side are non-negative, which implies the entire expression is non-negative. Thus,
\[
\bigl(1 - \tfrac{\delta}{2}\bigr)\Phi(\vect{b}, \vect{d}) - \Phi(\vect{a}, \vect{c}) \ge 0,
\]
which rearranges to $\Phi(\vect{a}, \vect{c}) \le \parenth{1 - \delta/2}\Phi(\vect{b}, \vect{d})$, the desired result.
\end{proof}

\begin{lemma}[Two-component norm perturbation]\label{lem:omd-sqrt-perturb}
Let $(\mathcal{X},\|\cdot\|)$ be a normed space.
For any $\vect{a},\vect{b},\vect{c},\vect{d}\in\mathcal{X}$ with $\|\vect{b}\|\le\|\vect{d}\|$,
\[
\sqrt{\|\vect{a}\|^{2}+\|\vect{b}\|^{2}}
\;\le\;
\sqrt{\|\vect{c}\|^{2}+\|\vect{d}\|^{2}}
+\|\vect{a}-\vect{c}\|.
\]
\end{lemma}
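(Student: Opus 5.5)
The plan is to reduce the statement to the ordinary triangle inequality for the Euclidean norm on \(\mathbb R^2\). Write \(u=(\norm{\vect{a}},\norm{\vect{b}})\) and \(w=(\norm{\vect{c}},\norm{\vect{d}})\), viewed as vectors in \(\mathbb R^2\) with the Euclidean norm \(|\cdot|_2\). The left-hand side is \(|u|_2\) and the first term on the right is \(|w|_2\). By Minkowski's inequality in \(\mathbb R^2\),
\[
    |u|_2\le |w|_2+|u-w|_2
    =
    \sqrt{\norm{\vect{c}}^2+\norm{\vect{d}}^2}
    +\sqrt{\bigl(\norm{\vect{a}}-\norm{\vect{c}}\bigr)^2+\bigl(\norm{\vect{b}}-\norm{\vect{d}}\bigr)^2}.
\]

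The remaining task is to bound the last square root by \(\norm{\vect{a}-\vect{c}}\). This step is where the hypothesis \(\norm{\vect{b}}\le\norm{\vect{d}}\) must enter, because the difference \(\norm{\vect{b}}-\norm{\vect{d}}\) is not small in general. I expect this to be the main obstacle, and a naive Minkowski bound does not close it.

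To handle the second coordinate, I would not subtract \(w\) directly. Instead I would use monotonicity of \(t\mapsto\sqrt{s+t^2}\) in \(t\ge0\) to replace \(\norm{\vect{b}}\) by \(\norm{\vect{d}}\) on the left first:
\[
    \sqrt{\norm{\vect{a}}^2+\norm{\vect{b}}^2}\le\sqrt{\norm{\vect{a}}^2+\norm{\vect{d}}^2},
\]
which is valid since \(0\le\norm{\vect{b}}\le\norm{\vect{d}}\).

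Now apply Minkowski in \(\mathbb R^2\) to \(u'=(\norm{\vect{a}},\norm{\vect{d}})\) and \(w=(\norm{\vect{c}},\norm{\vect{d}})\). Their difference is \(u'-w=(\norm{\vect{a}}-\norm{\vect{c}},0)\), so
\[
    |u'|_2\le |w|_2+\bigl|\norm{\vect{a}}-\norm{\vect{c}}\bigr|.
\]
The reverse triangle inequality in \(\mathcal X\) gives \(\bigl|\norm{\vect{a}}-\norm{\vect{c}}\bigr|\le\norm{\vect{a}-\vect{c}}\), and this completes the chain.

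Equivalently, one can note that \(s\mapsto\sqrt{s^2+D^2}\) is \(1\)-Lipschitz on \(\mathbb R\). This directly gives \(\sqrt{\norm{\vect{a}}^2+\norm{\vect{d}}^2}\le\sqrt{\norm{\vect{c}}^2+\norm{\vect{d}}^2}+\bigl|\norm{\vect{a}}-\norm{\vect{c}}\bigr|\) with no appeal to Minkowski. I would present this one-line version, preceded by the monotonicity step that uses the hypothesis. No Hilbert structure is needed, which matches the normed-space generality of the statement.
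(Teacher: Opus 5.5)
Your proposal is correct and is essentially the paper's proof: you first use $\|\vect{b}\|\le\|\vect{d}\|$ to replace the second coordinate, then apply the triangle inequality in $\mathbb{R}^2$ to $(\|\vect{a}\|,\|\vect{d}\|)=(\|\vect{c}\|,\|\vect{d}\|)+(\|\vect{a}\|-\|\vect{c}\|,0)$, and finish with the reverse triangle inequality. The $1$-Lipschitz reformulation of $s\mapsto\sqrt{s^2+D^2}$ is the same step in different words, and the naive Minkowski attempt you set aside can be omitted from the final write-up.
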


\begin{proof}
Since $\|\vect{b}\|\le\|\vect{d}\|$,
\[
\sqrt{\|\vect{a}\|^{2}+\|\vect{b}\|^{2}}
\le
\sqrt{\|\vect{a}\|^{2}+\|\vect{d}\|^{2}}.
\]
We can view the pair of scalar norms $(\|\vect{a}\|,\|\vect{d}\|)$ as a vector in $\mathbb{R}^{2}$ and write
\[
(\|\vect{a}\|,\|\vect{d}\|)
=
(\|\vect{c}\|,\|\vect{d}\|)+(\|\vect{a}\|-\|\vect{c}\|,0).
\]
Applying the standard triangle inequality for vectors in $\mathbb{R}^{2}$ gives
\[
\sqrt{\|\vect{a}\|^{2}+\|\vect{d}\|^{2}}
\le
\sqrt{\|\vect{c}\|^{2}+\|\vect{d}\|^{2}}
+|\|\vect{a}\|-\|\vect{c}\||
\le
\sqrt{\|\vect{c}\|^{2}+\|\vect{d}\|^{2}}
+\|\vect{a}-\vect{c}\|,
\]
where the last step follows from the reverse triangle inequality. Combining the two displayed inequalities yields the claim.
\end{proof}

\subsubsection{Proof of Theorem~\ref{thm:reg_og}}

Before the algebra, we record the bookkeeping. The fixed-\(G_k\) PEG step contracts a two-component residual potential at the staggered pair \((\vect{x}_{k+1},\tilde{\vect{x}}_k)\). Then \cref{lem:omd-sqrt-perturb} compares the \(G_k\)-potential at \(\vect{x}_k\) with the previous \(G_{k-1}\)-potential, losing only the drift term \((\delta_{k-1}-\delta_k)\norm{\vect{x}_k-\vect{x}_0}\). The trajectory bound turns this drift into \(2(\delta_{k-1}-\delta_k)R\), after which the scalar recurrence lemma gives the displayed residual rate.

Define
\[
E_0 \coloneqq
\sqrt{\|G_0(\vect{x}_0)\|^2
      +2\|G_0(\vect{x}_0)-G_0(\tilde{\vect{x}}_{-1})\|^2},
\]
and, for \(k\ge1\),
\[
E_k \coloneqq
\sqrt{\|G_{k-1}(\vect{x}_k)\|^2
      +2\|G_{k-1}(\vect{x}_k)-G_{k-1}(\tilde{\vect{x}}_{k-1})\|^2}.
\]

\smallskip
\noindent\textbf{Step 1: Fixed-operator residual-potential contraction.}
For a fixed \(k\), set
\[
    \vect{a}=G_k(\vect{x}_{k+1}),\quad
    \vect{b}=G_k(\vect{x}_k),\quad
    \vect{c}=G_k(\tilde{\vect{x}}_k),\quad
    \vect{d}=G_k(\tilde{\vect{x}}_{k-1}).
\]
The update rules give \(\vect{c}=\vect{x}_k-\vect{x}_{k+1}\) and \(\tilde{\vect{x}}_k=\vect{x}_k-\vect{d}\). Hence
\[
\vect{d}-\vect{c}
=
(\vect{x}_k-\tilde{\vect{x}}_k)-(\vect{x}_k-\vect{x}_{k+1})
=
\vect{x}_{k+1}-\tilde{\vect{x}}_k.
\]
Strong monotonicity of \(G_k\) gives
\[
    \angles{\vect{b}-\vect{a},\vect{c}}
    =
    \angles{G_k(\vect{x}_k)-G_k(\vect{x}_{k+1}),\vect{x}_k-\vect{x}_{k+1}}
    \ge \delta_k\norm{\vect{c}}^2.
\]
Since \(\delta_k\le\delta_0=1/8<1/3\) and \(\eta L+\delta_k\le1/4\), Lipschitzness gives
\[
    \norm{\vect{a}-\vect{c}}
    \le
    (\eta L+\delta_k)\norm{\vect{x}_{k+1}-\tilde{\vect{x}}_k}
    \le
    \frac14\norm{\vect{d}-\vect{c}}
    \le
    \frac13\norm{\vect{d}-\vect{c}}.
\]
Applying \cref{lem:peg-contraction} yields
\[
\begin{aligned}
&\|G_k(\vect{x}_{k+1})\|^2
  +2\|G_k(\vect{x}_{k+1})-G_k(\tilde{\vect{x}}_k)\|^2 \\
&\qquad\le
\left(1-\frac{\delta_k}{2}\right)
\left(
\|G_k(\vect{x}_k)\|^2
+2\|G_k(\vect{x}_k)-G_k(\tilde{\vect{x}}_{k-1})\|^2
\right).
\end{aligned}
\]
The left-hand side is \(E_{k+1}^2\) by the staggered definition of \(E_{k+1}\).
Taking square roots and using \(\sqrt{1-z}\le1-z/2\) with \(z=\delta_k/2\) gives
\[
E_{k+1}
\le
\left(1-\frac{\delta_k}{4}\right)
\sqrt{
\|G_k(\vect{x}_k)\|^2
+2\|G_k(\vect{x}_k)-G_k(\tilde{\vect{x}}_{k-1})\|^2}.
\]

\smallskip
\noindent\textbf{Step 2: Drift from \(G_{k-1}\) to \(G_k\).}
It remains to compare the \(G_k\)-potential at \(\vect{x}_k\) with \(E_k\). By monotonicity of \(F\) and \(\delta_k\le\delta_{k-1}\),
\begin{align*}
&2\norm{G_k(\vect{x}_k)-G_k(\tilde{\vect{x}}_{k-1})}^2
-2\norm{G_{k-1}(\vect{x}_k)-G_{k-1}(\tilde{\vect{x}}_{k-1})}^2 \\
&=
4\eta(\delta_k-\delta_{k-1})
\angles{F(\vect{x}_k)-F(\tilde{\vect{x}}_{k-1}),\vect{x}_k-\tilde{\vect{x}}_{k-1}}
{}+
2(\delta_k^2-\delta_{k-1}^2)
\norm{\vect{x}_k-\tilde{\vect{x}}_{k-1}}^2
\le0.
\end{align*}
Indeed, with \(\vect{u}=F(\vect{x}_k)-F(\tilde{\vect{x}}_{k-1})\) and \(\vect{w}=\vect{x}_k-\tilde{\vect{x}}_{k-1}\), monotonicity gives \(\angles{\vect{u},\vect{w}}\ge0\). Since \(\delta_k-\delta_{k-1}\le0\) and \(\delta_k^2-\delta_{k-1}^2\le0\), both terms in the displayed difference are non-positive.
Moreover,
\[
    \norm{G_k(\vect{x}_k)-G_{k-1}(\vect{x}_k)}
    =
    (\delta_{k-1}-\delta_k)\norm{\vect{x}_k-\vect{x}_0}.
\]
For the notation of \cref{lem:omd-sqrt-perturb}, take
\[
\begin{aligned}
\vect{a}&=G_k(\vect{x}_k),&
\vect{c}&=G_{k-1}(\vect{x}_k),\\
\vect{b}&=\sqrt{2}\big(G_k(\vect{x}_k)-G_k(\tilde{\vect{x}}_{k-1})\big),&
\vect{d}&=\sqrt{2}\big(G_{k-1}(\vect{x}_k)-G_{k-1}(\tilde{\vect{x}}_{k-1})\big).
\end{aligned}
\]
The preceding display gives \(\norm{\vect{b}}\le\norm{\vect{d}}\), and the drift bound gives \(\norm{\vect{a}-\vect{c}}\le(\delta_{k-1}-\delta_k)\norm{\vect{x}_k-\vect{x}_0}\). Applying \cref{lem:omd-sqrt-perturb} to the two-component norm gives, for \(k\ge1\),
\[
    E_{k+1}
    \le
    \left(1-\frac{\delta_k}{4}\right)
    \left(
        E_k+(\delta_{k-1}-\delta_k)\norm{\vect{x}_k-\vect{x}_0}
    \right).
\]
By \cref{lem:reg-og-traj-bound},
\[
    E_{k+1}
    \le
    \left(1-\frac{\delta_k}{4}\right)
    \left(E_k+2(\delta_{k-1}-\delta_k)R\right),
    \qquad k\ge1.
\]

\smallskip
\noindent\textbf{Step 3: Scalar recurrence.}
For the base case, since \(\tilde{\vect{x}}_{-1}=\vect{x}_0\),
\[
    E_0=\norm{G_0(\vect{x}_0)}
    =\eta\norm{F(\vect{x}_0)}
    \le \eta L R=\frac{R}{8}.
\]
Because \(\tilde{\vect{x}}_{-1}=\vect{x}_0\), the right-hand potential in the fixed-operator contraction at \(k=0\) is exactly \(E_0\). Hence \(E_1\le(1-\delta_0/4)E_0\le E_0\le R/8\). Applying \cref{lem:recurrence} with
\[
    \delta_k=\frac{8}{k+64},\qquad
    \alpha=8,\qquad
    \beta=64,\qquad
    \gamma=\frac14,\qquad
    c=2,\qquad
    c_1=\frac18
\]
gives
\[
    E_k\le \frac{16R}{k+63},\qquad k\ge1.
\]

\smallskip
\noindent\textbf{Step 4: Bias removal.}
Finally, for \(k\ge1\),
\[
\norm{F(\vect{x}_k)}
\le
\frac{E_k+\delta_{k-1}\norm{\vect{x}_k-\vect{x}_0}}{\eta}
\le
\frac{E_k+2\delta_{k-1}R}{\eta}
\le
\frac{256LR}{k+63}.
\]


\section{Conclusion}
\label{sec:conclusion}

Anchoring is often presented as a structural change to the base algorithm, and the role of the anchor and its placement can be unintuitive, even when the resulting rates are sharp. This work focuses on an operator-side view of anchoring for fixed-point and monotone-equation methods: apply a vanishing Tikhonov term to the queried operator or displacement, then run the standard Picard, forward, extragradient, or Popov past-extragradient template on the resulting moving regularized object. A single analysis framework captures the intuition: the method makes more rapid progress on a better conditioned problem, while the parameters balance that progress against the bias with respect to the original problem. More importantly, this perspective clarifies the mechanism of anchoring ``acceleration'' and has directly led to several new results presented in this paper.

The results are established in a simple single-valued unconstrained setting to keep the mechanism transparent. Extensions to monotone inclusions and variational inequalities are natural follow-ups.

\appendix
\numberwithin{lemma}{section}
\numberwithin{proposition}{section}
\section{A Recurrence Lemma}\label{app:technical-lemma}
We use the following deterministic recurrence lemma throughout the manuscript.

\begin{lemma}\label{lem:recurrence}
Let \(c,c_1,\alpha,\beta,\gamma>0\) and \(R\ge0\) with $1 < \gamma \alpha \le \beta$ and define
\[
   \delta_n \;=\; \frac{\alpha}{\,n+\beta\,}, 
   \qquad n\ge0.
\]
Assume the sequence \((e_n)_{n\ge 1}\) satisfies $e_1 \le c_1\,R$ and
\begin{equation} \label{eq:rec}
   e_{n+1}  \le \bigl(1-\gamma\delta_n\bigr)
         \Bigl(e_{n}+c\,(\delta_{n-1}-\delta_n)\,R\Bigr),
         \qquad n\ge 1. 
\end{equation}
Then for every \(n\ge 1\),
\[
   e_n \le \frac{M}{n + \beta - 1}, \quad \text{where }
        M := R\max\!\bigl\{
            \tfrac{c\,\alpha}{\gamma\alpha-1},
            \;c_1\beta
         \bigr\}.
\]
\end{lemma}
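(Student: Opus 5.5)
Induction on \(n\): we show \(e_n\le M/(n+\beta-1)\) for all \(n\ge1\). For the base case \(n=1\), the hypothesis gives \(e_1\le c_1R\), and \(c_1R = c_1\beta R/\beta\le M/\beta = M/(1+\beta-1)\), which is exactly why \(c_1\beta R\) appears inside the maximum defining \(M\).

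For the inductive step, assume \(e_n\le M/(n+\beta-1)\) for some \(n\ge1\). The drift term is explicit:
\[
\delta_{n-1}-\delta_n=\frac{\alpha}{(n+\beta-1)(n+\beta)}.
\]
Write \(m\coloneqq n+\beta\), so that \(n+\beta-1=m-1\). Note \(m-1\ge\beta\ge\gamma\alpha>1\), so all denominators are positive. Also \(1-\gamma\delta_n=1-\gamma\alpha/m\ge0\) because \(\gamma\alpha\le\beta\le m\). Hence the factor multiplying the bracket in \eqref{eq:rec} is nonnegative, and we may substitute the inductive bound monotonically:
\[
e_{n+1}\le\Bigl(1-\frac{\gamma\alpha}{m}\Bigr)\Bigl(\frac{M}{m-1}+\frac{c\alpha R}{(m-1)m}\Bigr)
=\frac{(m-\gamma\alpha)(Mm+c\alpha R)}{m^2(m-1)}.
\]
It then suffices to show that this is at most \(M/m\), i.e.
\[
(m-\gamma\alpha)(Mm+c\alpha R)\le Mm(m-1).
\]

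Expanding, the inequality becomes
\[
Mm^2 - \gamma\alpha Mm + c\alpha R m - \gamma\alpha c\alpha R \le Mm^2 - Mm.
\]
Dropping the nonpositive term \(-\gamma c\alpha^2R\) on the left, it suffices that
\[
m\bigl(M(\gamma\alpha-1)-c\alpha R\bigr)\ge0.
\]
This holds because \(M\ge c\alpha R/(\gamma\alpha-1)\), and this is the role of the first entry in the maximum. The condition \(\gamma\alpha>1\) is essential here: it is what makes \(\gamma\alpha-1>0\), so that the contraction beats the \(1/m\) shrinkage of the target bound.

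The main obstacle is only bookkeeping. One must confirm that \(1-\gamma\delta_n\ge0\), which uses \(\gamma\alpha\le\beta\), so that substituting the inductive hypothesis is legitimate. One must also keep the indices aligned, since the target at \(n+1\) is \(M/(n+\beta)=M/m\).
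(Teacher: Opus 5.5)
Your proof is correct and follows the paper's argument. It uses the same induction, the same base case via the $c_1\beta R$ entry of $M$, and the same use of $M(\gamma\alpha-1)\ge c\alpha R$ together with the nonpositive $-\gamma c\alpha^2R$ term; the paper writes the bound as $M/(n+\beta)$ plus a nonpositive remainder rather than clearing denominators. Your explicit check that $1-\gamma\delta_n\ge0$ (from $\gamma\alpha\le\beta$), which is needed to substitute the inductive hypothesis, is a step the paper leaves implicit.
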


\begin{proof}
We prove by induction that \(e_n\le M/(n+\beta-1)\) for all \(n\ge 1\).

\smallskip
\noindent\textbf{Base case (\(n=1\)).}
By the definition of \(M\) we have
\[
   e_1 \;\le\; c_1R \;\le\; \frac{M}{\beta}.
\]

\smallskip
\noindent\textbf{Inductive step.}
Assume \(e_{n}\le M/(n-1+\beta)\) for some \(n\ge 1\).
Since
\(
   \delta_{n-1}-\delta_n
   =\dfrac{\alpha}{(n-1+\beta)(n+\beta)},
\)
recurrence \eqref{eq:rec} yields
\[
\begin{aligned}
   e_{n+1}
   &\le
   \Bigl(1-\frac{\gamma\alpha}{n+\beta}\Bigr)
   \Bigl(
      \frac{M}{n-1+\beta}
      +\frac{c\alpha R}{(n-1+\beta)(n+\beta)}
   \Bigr) \\[4pt]
   &=
   \frac{M(1-\gamma\alpha)+c\alpha R
         -\dfrac{\gamma\alpha\,c\alpha R}{n+\beta}}
        {(n-1+\beta)(n+\beta)}
   \;+\;
   \frac{M}{n+\beta}.
\end{aligned}
\]
By the definition of \(M\),
\[
   M \;\ge\; \frac{c\alpha R}{\gamma\alpha-1}
   \;\Longrightarrow\;
   M(1-\gamma\alpha)+c\alpha R \;\le\; 0.
\]
Hence the first fraction is non-positive, giving
\(
   e_{n+1}\le M/(n+\beta).
\)
Thus the inductive step closes and the claimed bound holds for all \(n\ge 1\), implying
\(e_n=O(1/n)\).
\end{proof}

\section{Fixed-Point Extensions}\label{app:fixed-point-extensions}

\subsection{An Episodic Warm-Restart Scheme}
Here we present an episodic fixed-point variant of the tracking
argument. From the operator-side viewpoint, this is a natural continuation
scheme: for a fixed value of \(\delta_k\), the map \(T_k\) is contractive, so
one may take several Picard steps on the current regularized problem and then
warm-start the next, less regularized phase. The proposition below measures the
resulting residual in terms of the total number of inner Picard steps.

Formally, at each phase \(k\), apply the operator \(T_k\) for \(t_k\) iterations, initialized from the output of the previous phase:
\begin{equation}
    \vect{x}_{k+1} = (T_k)^{t_k}(\vect{x}_k)
    \equiv
    \underbrace{T_k \circ \cdots \circ T_k}_{t_k \text{ times}}(\vect{x}_k).
\end{equation}

\begin{proposition}[Episodic Halpern Scheme with Geometric Rate]\label{prop:episodic_halpern}
Let \(T\) be a nonexpansive mapping with \(\operatorname{Fix}(T) \ne \varnothing\). Consider the episodic scheme where
\[
    \vect{x}_{k+1} = (T_k)^{t_k}(\vect{x}_k),
    \qquad
    T_k(\vect{x}) \coloneqq (1-\delta_k) T(\vect{x}) + \delta_k \vect{x}_0.
\]
Fix any \(\vect{x}^*\in\operatorname{Fix}(T)\) and set \(R\coloneqq\norm{\vect{x}_0-\vect{x}^*}\). With \(\delta_k = 2^{-k}\) and \(t_k = \lceil \ln(4)/\delta_k \rceil\), the residual norm after \(K\) phases satisfies
\[
    \norm{\vect{x}_K - T(\vect{x}_K)} \le 12R\,2^{-K},
    \qquad K\ge1.
\]
Furthermore, in terms of the total number of inner iterations \(N_K = \sum_{j=0}^{K-1} t_j\), this gives
\[
    \norm{\vect{x}_K - T(\vect{x}_K)} = O\left(\frac{R}{N_K}\right).
\]
\end{proposition}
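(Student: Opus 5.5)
The plan is to follow the progress--drift--bias template of \cref{prop:halpern_residual_bound}, with each phase now running \(t_k\) Picard steps on \(T_k\) instead of one. We track the moving residual \(E_k\coloneqq\norm{\vect{x}_k-T_{k-1}(\vect{x}_k)}\) for \(k\ge1\) and then remove the bias at the end.

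\textbf{Boundedness.} Step 1 of \cref{prop:halpern_residual_bound} shows that every map \(T_k\) sends the ball \(\{\vect{x}:\norm{\vect{x}-\vect{x}^*}\le R\}\) into itself. Iterating \(T_k\) any number of times therefore preserves \(\norm{\vect{x}-\vect{x}^*}\le R\). This holds for all inner iterates, so \(\norm{T(\vect{y})-\vect{x}_0}\le2R\) at every point \(\vect{y}\) visited.

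\textbf{Phase contraction.} Let \(\vect{y}_0=\vect{x}_k\) and \(\vect{y}_{j+1}=T_k(\vect{y}_j)\). Since \(T_k\) is a \((1-\delta_k)\)-contraction,
\[
\norm{\vect{y}_{j+1}-T_k(\vect{y}_{j+1})}\le(1-\delta_k)\norm{\vect{y}_j-T_k(\vect{y}_j)},
\]
exactly as in Step 2 of \cref{prop:halpern_residual_bound}. After \(t_k\) steps,
\[
\norm{\vect{x}_{k+1}-T_k(\vect{x}_{k+1})}\le(1-\delta_k)^{t_k}\norm{\vect{x}_k-T_k(\vect{x}_k)}\le e^{-\delta_kt_k}\norm{\vect{x}_k-T_k(\vect{x}_k)}\le\tfrac14\norm{\vect{x}_k-T_k(\vect{x}_k)},
\]
where the last inequality uses \(t_k\ge\ln4/\delta_k\).

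\textbf{Recurrence.} The drift identity \(T_{k-1}(\vect{x})-T_k(\vect{x})=(\delta_{k-1}-\delta_k)(\vect{x}_0-T(\vect{x}))\) gives
\[
E_{k+1}\le\tfrac14\bigl(E_k+2(\delta_{k-1}-\delta_k)R\bigr)=\tfrac14E_k+\tfrac14\cdot2^{-k+1}R,
\]
using \(\delta_{k-1}-\delta_k=2^{-k}\). For the base case, \(\delta_0=1\) makes \(T_0\) constant, so \(\vect{x}_1=\vect{x}_0\) and \(E_1=\norm{\vect{x}_0-T_0(\vect{x}_0)}=0\).

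Since \(\cref{lem:recurrence}\) is stated for harmonic schedules, I would not use it here. Instead I would solve the recurrence directly by induction with the ansatz \(E_k\le A\,2^{-k}R\). The inductive step requires
\[
\tfrac14A2^{-k}+\tfrac12 2^{-k}\le A2^{-k-1},
\]
which holds iff \(A\ge2\). So \(E_k\le2^{1-k}R\).

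\textbf{Bias removal.} We have
\[
\norm{\vect{x}_K-T(\vect{x}_K)}\le E_K+\delta_{K-1}\cdot2R=2^{1-K}R+2^{2-K}R=6R\,2^{-K},
\]
which is within the claimed \(12R\,2^{-K}\). The slack absorbs any looser constant choices, for example bounding \(\norm{\vect{x}_k-T_k(\vect{x}_k)}\) more crudely in the recurrence.

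\textbf{Iteration count.} We have \(t_j\le\ln4\cdot2^j+1\), so
\[
N_K\le\ln4\,(2^K-1)+K=O(2^K),
\]
hence \(2^{-K}=O(1/N_K)\), giving \(O(R/N_K)\).

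The only real obstacle is in the phase contraction: checking that the per-phase residual shrinks by the full factor \((1-\delta_k)^{t_k}\) across the inner loop. This holds because each inner step is the same fixed contraction, so the one-step argument iterates without picking up any drift. The balance between the geometric contraction factor \(1/4\) and the geometric drift \(2^{-k}\) is what makes the fixed constant \(A\) close.
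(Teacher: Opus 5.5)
Your proof is correct and follows essentially the same route as the paper. It uses the same moving residual \(E_k=\norm{\vect{x}_k-T_{k-1}(\vect{x}_k)}\), the same per-phase factor \((1-\delta_k)^{t_k}\le 1/4\) with drift \(2(\delta_{k-1}-\delta_k)R\), a direct induction in place of \cref{lem:recurrence}, and the same bias removal and bound on \(N_K\). The only difference is that your tighter ansatz \(E_k\le 2^{1-k}R=R\delta_{k-1}\) (the paper uses \(E_k\le 4R\delta_{k-1}\)) closes exactly and gives the sharper bound \(6R\,2^{-K}\) instead of \(12R\,2^{-K}\).
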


\begin{proof}
The analysis follows the same structure as the Halpern proof in the main text. The ball \(B(\vect{x}^*,R)\) is invariant under each \(T_k\): if \(\norm{\vect{x}-\vect{x}^*}\le R\), then
\[
\norm{T_k(\vect{x})-\vect{x}^*}
\le
(1-\delta_k)\norm{T(\vect{x})-\vect{x}^*}
+\delta_k\norm{\vect{x}_0-\vect{x}^*}
\le R.
\]
Thus \(\vect{x}_k\in B(\vect{x}^*,R)\) for all \(k\), and in particular \(\norm{T(\vect{x}_k)-\vect{x}_0}\le2R\). For \(k\ge1\), let \(E_k \coloneqq \norm{\vect{x}_k - T_{k-1}(\vect{x}_k)}\). If \(S\) is a \(q\)-contraction, then \(\norm{S^t\vect{x}-S^{t+1}\vect{x}}\le q^t\norm{\vect{x}-S\vect{x}}\). Applying this with \(S=T_k\) gives
\[
    E_{k+1} \le (1-\delta_k)^{t_k} \left(E_k + 2(\delta_{k-1} - \delta_k)R\right),
    \qquad k\ge1.
\]
For \(\delta_k = 2^{-k}\), we have \(\delta_{k-1} - \delta_k = \delta_k\) for \(k \ge 1\). Setting \(t_k = \lceil \ln(4)/\delta_k \rceil\) gives \((1-\delta_k)^{t_k} \le e^{-\delta_k t_k} \le 1/4\), and therefore
\[
    E_{k+1} \le \frac{1}{4}(E_k + 2\delta_k R).
\]
Since \(\delta_0=1\), \(T_0\) is the constant map \(\vect{x}\mapsto \vect{x}_0\), so \(E_1=0\le 4R\delta_0\). We prove by induction that \(E_k\le 4R\delta_{k-1}\) for all \(k\ge1\). If \(E_{k} \le 4R\delta_{k-1}\), then
\[
    E_{k+1} \le \frac{1}{4}(4R\delta_{k-1} + 2R\delta_k)
    = \frac{1}{4}(8R\delta_k + 2R\delta_k)
    = \frac{5}{2}R\delta_k \le 4R\delta_k.
\]
Hence, by induction,
\[
    \norm{\vect{x}_k - T(\vect{x}_k)}
    \le E_k + \delta_{k-1}\norm{\vect{x}_0 - T(\vect{x}_k)}
    \le 4R\delta_{k-1} + 2R\delta_{k-1}
    = 6R\delta_{k-1}.
\]
At \(k=K\ge1\), this gives \(\norm{\vect{x}_K - T(\vect{x}_K)}\le 6R\delta_{K-1}=12R2^{-K}\). Also,
\[
N_K=\sum_{j=0}^{K-1}\left\lceil (\ln 4)2^j\right\rceil
\le (\ln4+1)(2^K-1),
\]
so \(N_K+\ln4+1\le(\ln4+1)2^K\), or equivalently \(2^{-K}\le(\ln4+1)/(N_K+\ln4+1)\). The same bound is therefore \(O(R/N_K)\).
\end{proof}

\subsection{A Viscosity Variant}\label{app:viscosity}
Here we present the viscosity generalization of the same fixed-point
regularization. The fixed anchor \(\vect{x}_0\) is the special case of a
constant contraction \(g\equiv\vect{x}_0\). Viscosity approximation replaces
this constant map by a general contraction \(g\), a standard generalization of
Halpern/Browder fixed-point regularization
\cite{moudafi2000viscosity,xu2004viscosity}. The proposition below shows that
the same moving-contraction tracking proof applies to this form. Given a map
\(g:\mathcal X\to\mathcal X\) and parameters \((\delta_k)\), define
\begin{equation}\label{eq:halpern_viscosity}
    T_k(\vect{x}) \coloneqq (1-\delta_k)T(\vect{x})+\delta_k g(\vect{x}).
\end{equation}

\begin{proposition}[Viscosity residual bound]\label{prop:halpern_viscosity}
Assume the following setting. Let \(T\) be a nonexpansive mapping with \(\operatorname{Fix}(T)\ne\varnothing\), and let \(g\) be a contraction with factor \((1-\rho)\) for some \(\rho\in(0,1]\). Let \((\vect{x}_k)_{k\ge0}\) be generated by \(\vect{x}_{k+1}=T_k(\vect{x}_k)\) with
\[
    \delta_k=\frac{2}{\rho k+2}.
\]
For any $\vect{x}^*\in\operatorname{Fix}(T)$, define
\[
    R\coloneqq \max\braces{\norm{\vect{x}_0-\vect{x}^*},\,\frac{\norm{g(\vect{x}^*)-\vect{x}^*}}{\rho}}.
\]
Then, for all $k\ge1$,
\[
    \norm{\vect{x}_k-T(\vect{x}_k)}
    \le
    \frac{8R}{\rho k+2-\rho}.
\]
\end{proposition}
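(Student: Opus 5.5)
The plan is to run the four-step tracking argument from the proof of \cref{prop:halpern_residual_bound} essentially verbatim. The only change is that the anchor \(\vect{x}_0\) is replaced by \(g(\vect{x})\) and the contraction factor \(1-\delta_k\) by \(1-\rho\delta_k\). The potential is again \(E_k\coloneqq\norm{\vect{x}_k-T_{k-1}(\vect{x}_k)}\) for \(k\ge1\).

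\emph{Step 1: boundedness and contraction factor.} For \(\norm{\vect{x}-\vect{x}^*}\le r\), nonexpansiveness of \(T\) and the contraction property of \(g\) give
\[
\norm{T_k(\vect{x})-\vect{x}^*}\le(1-\delta_k)r+\delta_k\bigl((1-\rho)r+\norm{g(\vect{x}^*)-\vect{x}^*}\bigr)\le(1-\rho\delta_k)r+\rho\delta_kR.
\]
So the ball \(B(\vect{x}^*,R)\) is invariant, and by induction \(\norm{\vect{x}_k-\vect{x}^*}\le R\) for all \(k\). From this I will derive the key drift and bias bound
\[
\norm{g(\vect{x}_k)-T(\vect{x}_k)}\le\norm{g(\vect{x}_k)-g(\vect{x}^*)}+\norm{g(\vect{x}^*)-\vect{x}^*}+\norm{\vect{x}^*-T(\vect{x}_k)}\le(1-\rho)R+\rho R+R=2R.
\]
The same computation, now without the ball restriction, shows that \(T_k\) is a \((1-\rho\delta_k)\)-contraction.

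\emph{Step 2: recurrence.} As in the Halpern proof, \(E_{k+1}=\norm{T_k(\vect{x}_k)-T_k(T_k(\vect{x}_k))}\le(1-\rho\delta_k)\norm{\vect{x}_k-T_k(\vect{x}_k)}\). The drift identity is \(T_{k-1}(\vect{x})-T_k(\vect{x})=(\delta_{k-1}-\delta_k)(g(\vect{x})-T(\vect{x}))\). Combining these with Step 1 gives, for \(k\ge1\),
\[
E_{k+1}\le(1-\rho\delta_k)\bigl(E_k+2(\delta_{k-1}-\delta_k)R\bigr).
\]

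\emph{Step 3: base case and \cref{lem:recurrence}.} The base case is the one point that differs from the anchored proof. Here \(\delta_0=1\) gives \(T_0=g\), which is not constant, so \(E_1\neq0\) in general. Instead I will bound
\[
E_1=\norm{g(\vect{x}_0)-g(g(\vect{x}_0))}\le(1-\rho)\norm{\vect{x}_0-g(\vect{x}_0)}\le(1-\rho)\cdot2R\le2R,
\]
where the middle estimate is the same triangle-inequality argument as in Step 1. Writing \(\delta_k=(2/\rho)/(k+2/\rho)\), I apply \cref{lem:recurrence} with \(\alpha=\beta=2/\rho\), \(\gamma=\rho\), \(c=2\) and \(c_1=2\). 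The hypothesis \(1<\gamma\alpha=2\le\beta\) holds because \(\rho\le1\). Both entries in the maximum defining \(M\) equal \(4/\rho\), so \(M=4R/\rho\) and
\[
E_k\le\frac{4R/\rho}{k+2/\rho-1}=\frac{4R}{\rho k+2-\rho}.
\]

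\emph{Step 4: bias removal.} We have \(\norm{\vect{x}_k-T(\vect{x}_k)}\le E_k+\delta_{k-1}\norm{g(\vect{x}_k)-T(\vect{x}_k)}\le E_k+2R\delta_{k-1}\). Since \(\delta_{k-1}=2/(\rho k+2-\rho)\), this sums to \(8R/(\rho k+2-\rho)\).

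I expect the main obstacle to be choosing \(R\) so that the \(2R\) drift bound and the nonzero base case both close. The definition of \(R\) as a maximum including \(\norm{g(\vect{x}^*)-\vect{x}^*}/\rho\) is exactly what makes the ball invariance in Step 1 work. Once that holds, the rest is bookkeeping.
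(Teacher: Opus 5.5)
Your proposal is correct and follows the paper's proof essentially step for step: the invariant ball $B(\vect{x}^*,R)$, the $(1-\rho\delta_k)$-contraction of $T_k$, the drift and bias bound $\norm{T(\vect{x}_k)-g(\vect{x}_k)}\le 2R$, \cref{lem:recurrence} with $\alpha=\beta=2/\rho$, $\gamma=\rho$, $c=c_1=2$, and the final bias removal. The only difference is cosmetic and lies in the base case: the paper gets $E_1\le 2R$ by noting that both $\vect{x}_1=g(\vect{x}_0)$ and $g(\vect{x}_1)$ lie in $B(\vect{x}^*,R)$, whereas you use the contraction of $g$ to get the slightly sharper bound $(1-\rho)\cdot 2R$.
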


\begin{proof}
Since $\delta_0=1$ and $(\delta_k)$ is non-increasing, $\delta_k\in(0,1]$ for all $k$. Hence $T_k$ is a contraction with factor $1-\rho\delta_k$:
\[
    \norm{T_k(\vect{x})-T_k(\vect{y})}
    \le (1-\delta_k)\norm{\vect{x}-\vect{y}}
       +\delta_k(1-\rho)\norm{\vect{x}-\vect{y}}
    =(1-\rho\delta_k)\norm{\vect{x}-\vect{y}}.
\]
We first show boundedness. If $\norm{\vect{x}-\vect{x}^*}\le R$, then
\begin{align*}
    \norm{T_k(\vect{x})-\vect{x}^*}
    &\le (1-\delta_k)\norm{T(\vect{x})-T(\vect{x}^*)}
       +\delta_k\norm{g(\vect{x})-\vect{x}^*} \\
    &\le (1-\delta_k)\norm{\vect{x}-\vect{x}^*}
       +\delta_k(1-\rho)\norm{\vect{x}-\vect{x}^*}
       +\delta_k\norm{g(\vect{x}^*)-\vect{x}^*} \\
    &\le (1-\rho\delta_k)R+\rho\delta_k R
    =R.
\end{align*}
Thus $\norm{\vect{x}_k-\vect{x}^*}\le R$ for all $k$ by induction.

Define $E_k\coloneqq\norm{\vect{x}_k-T_{k-1}(\vect{x}_k)}$ for $k\ge1$. The contraction of $T_k$ gives
\[
    E_{k+1}\le (1-\rho\delta_k)\norm{\vect{x}_k-T_k(\vect{x}_k)}.
\]
Moreover,
\begin{align*}
    \norm{\vect{x}_k-T_k(\vect{x}_k)}
    &\le E_k+\norm{T_{k-1}(\vect{x}_k)-T_k(\vect{x}_k)} \\
    &\le E_k+(\delta_{k-1}-\delta_k)\norm{T(\vect{x}_k)-g(\vect{x}_k)}.
\end{align*}
The boundedness estimate implies
\[
    \norm{T(\vect{x}_k)-g(\vect{x}_k)}
    \le R+(1-\rho)R+\rho R=2R.
\]
Therefore
\[
    E_{k+1}\le (1-\rho\delta_k)\left(E_k+2(\delta_{k-1}-\delta_k)R\right).
\]
Also \(E_1\le2R\). Indeed, since \(\delta_0=1\), we have \(\vect{x}_1=T_0(\vect{x}_0)=g(\vect{x}_0)\); the invariant-ball argument gives \(\vect{x}_1\in B(\vect{x}^*,R)\), and applying it once more gives \(T_0(\vect{x}_1)=g(\vect{x}_1)\in B(\vect{x}^*,R)\). Therefore \(E_1=\norm{\vect{x}_1-T_0(\vect{x}_1)}\le2R\). Applying \cref{lem:recurrence} with
\[
    \alpha=\beta=\frac{2}{\rho},\qquad \gamma=\rho,\qquad c=2,\qquad c_1=2,
\]
gives
\[
    E_k\le \frac{4R}{\rho k+2-\rho}.
\]
Finally,
\[
    \norm{\vect{x}_k-T(\vect{x}_k)}
    \le E_k+\delta_{k-1}\norm{T(\vect{x}_k)-g(\vect{x}_k)}
    \le \frac{4R}{\rho k+2-\rho}
       +\frac{4R}{\rho k+2-\rho},
\]
which proves the claim.
\end{proof}

\clearpage
\bibliographystyle{alpha}
\bibliography{ref}

\end{document}